\newtheorem{theorem}{Theorem}
\newtheorem{lemma}[theorem]{Lemma}
\newtheorem{prop}[theorem]{Proposition}
\newtheorem{cor}[theorem]{Corollary}
\theoremstyle{definition}
\newtheorem{rem}[theorem]{Remark}
\newcommand{\M}{\mathcal{M}}
\newcommand{\Z}{\mathbb{Z}}
\newcommand{\R}{\mathbb{R}}
\newcommand{\C}{\mathbb{C}}
\newcommand{\Dif}{\textrm{Diff}}
\newcommand{\ab}{\mathrm{ab}}
\newcommand{\im}{\mathrm{Im}}
\newcommand{\diag}{\mathrm{diag}}
\newcommand{\GL}{\mathrm{GL}}
\newcommand{\Sp}{\mathrm{Sp}}
\newcommand{\lr}[1]{\left<#1\right>}
\numberwithin{equation}{section}
\numberwithin{theorem}{section}
\author{B\l{}a\.zej Szepietowski}
\title[Linear representations of the mapping class group]{Low dimensional linear representations of the mapping class group of a nonorientable surface}
\address[]{Institute of Mathematics, Gda\'nsk University, Wita Stwosza 57,
80-952 Gda\'nsk, Poland} 
\email{blaszep@mat.ug.egu.pl}
\thanks{Supported by NCN grant nr 2012/05/B/ST1/02171.}
\begin{document}
\maketitle
\begin{abstract}
Suppose that $f$ is a homomorphism from the mapping class group $\M(N_{g,n})$ of a nonorientable surface of genus $g$ with $n$ boundary components, to $\GL(m,\C)$.
We prove that if $g\ge 5$, $n\le 1$ and $m\le g-2$, then $f$ factors through the abelianization of $\M(N_{g,n})$, which is $\Z_2\times\Z_2$ for $g\in\{5,6\}$ and $\Z_2$ for $g\ge 7$. If $g\ge 7$, $n=0$ and $m=g-1$, then either $f$ has finite image (of order at most two if $g\ne 8$), or it is conjugate to one of four ``homological representations''. As an application we prove that for $g\ge 5$ and $h<g$, every homomorphism $\M(N_{g,0})\to\M(N_{h,0})$ factors through the abelianization of $\M(N_{g,0})$. 
\end{abstract}
\section{Introduction}
For a compact surface $F$, its {\it mapping class group} $\M(F)$ is the group of isotopy classes of all, orientation preserving if $F$ is orientable, homeomorphisms $F\to F$ equal to the identity on the boundary of $F$. A compact surface of genus $g$ with $n$ boundary components will be denoted by $S_{g,n}$ if it is orientable, or by $N_{g,n}$ if it is nonorientable. If $n=0$ then we drop it in the notation and write simply $S_g$ or $N_g$. The first integral homology group of $F$ will be denoted by $H_1(F)$.

After fixing a basis of $H_1(S_g)$, the action of $\M(S_g)$ on $H_1(S_g)$ gives rise to a homomorphism
$\M(S_g)\to\Sp(2g,\Z)$, which is well known to be surjective, and whose kernel is known as the Torelli group. Gluing a disc along each boundary component of $S_{g,n}$ induces an epimorphism $\M(S_{g,n})\to\M(S_g)$, and by composing it with $\M(S_g)\to\Sp(2g,\Z)$, and then with the inclusion $\Sp(2g,\Z)\hookrightarrow\GL(2g,\C)$ we obtain the map
$\Phi\colon\M(S_{g,n})\to\GL(2g,\C)$. Recently, the following two results were proved by J. Franks, M. Handel and M. Korkmaz.
\begin{theorem}[\cite{FH,KorkRep}]\label{FHK}
Let $g\ge 2$, $m\le 2g-1$ and let $f\colon\M(S_{g,n})\to\GL(m,\C)$ be a homomorphism. Then $f$ is trivial if $g\ge 3$, and $\im(f)$  is a quotient of $\Z_{10}$ if $g=2$.
\end{theorem}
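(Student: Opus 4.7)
The plan is to analyze the images of Dehn twists. First I would reduce to the closed case $n=0$: the kernel of the gluing epimorphism $\M(S_{g,n})\to\M(S_g)$ is generated by Dehn twists along boundary-parallel curves together with elements coming from the Birman exact sequence, and these can be shown to lie in $\ker f$ by exploiting that each such element commutes with many of its own conjugates, forcing its image into a subgroup too small to survive the constraint $m\le 2g-1$.

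Once we are reduced to $\M(S_g)$, fix a nonseparating simple closed curve $a$ and set $A=f(T_a)$. Since $\M(S_g)$ acts transitively on isotopy classes of nonseparating simple closed curves, every $f(T_b)$ with $b$ nonseparating is conjugate to $A$, so they all share the eigenvalue multiset $\{\lambda_1,\dots,\lambda_m\}$ and Jordan type. I would then extract constraints on the $\lambda_i$ from three sources. \textbf{(a) Commuting pairs:} $S_g$ contains $g$ pairwise disjoint nonseparating curves, yielding $g$ pairwise commuting, pairwise conjugate matrices $A_1,\dots,A_g$ that share simultaneous generalized eigenspaces; this gives strong lower bounds on the dimension of each eigenspace. \textbf{(b) Braid relation:} If $a,b$ meet once, $ABA=BAB$ implies that $(AB)^3$ lies in the center of $\langle A,B\rangle$, so on each irreducible constituent it acts as a scalar, producing polynomial equations among the $\lambda_i$. \textbf{(c) Lantern relation:} Taking determinants in the identity $T_{c_1}T_{c_2}T_{c_3}T_{c_4}=T_{a_1}T_{a_2}T_{a_3}$, with all seven curves chosen nonseparating (possible for $g\ge 3$), gives $(\det A)^4=(\det A)^3$ and hence $\det A=1$; the full relation gives further multiplicative identities on the $\lambda_i$.

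Combining (a)--(c) in the regime $g\ge 3$, $m\le 2g-1$, the dimension bound should force every $\lambda_i$ to equal $1$, so $A$ is unipotent. The existence of finite-order elements in $\M(S_g)$ (such as the hyperelliptic involution and other periodic maps), whose $f$-images must then be finite-order unipotent matrices and hence the identity, can be used to conclude $A=I$; since nonseparating Dehn twists generate $\M(S_g)$, $f$ is trivial. For $g=2$ the scheme is identical, but the abelianization $H_1(\M(S_2))=\Z_{10}$ is nontrivial, so $f$ factors through it, giving a quotient of $\Z_{10}$.

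The principal obstacle is step (b) together with the sharpness of the bound $m\le 2g-1$: the genuine $2g$-dimensional symplectic representation shows that the argument cannot be too coarse, and the eigenvalue--multiplicity bookkeeping must combine the braid, commuting, and lantern relations in a coordinated way. This combinatorics is the technical heart of the Franks--Handel--Korkmaz proof, and any simplification would likely fail for $m=2g$.
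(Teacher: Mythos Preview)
The paper does not prove this theorem: it is quoted from \cite{FH,KorkRep} and used as a black box, so there is no ``paper's own proof'' to compare against. Your outline is, broadly speaking, in the spirit of the arguments in those references (especially Korkmaz's elementary approach via eigenvalue constraints coming from commutation, braid, and lantern relations).

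That said, your sketch has a genuine gap at the final step. From ``every Dehn twist maps to a unipotent matrix'' you jump to ``finite-order elements of $\M(S_g)$ map to finite-order unipotent matrices, hence to $I$''. This inference is invalid: a product of unipotent matrices need not be unipotent (indeed $\langle\begin{smallmatrix}1&1\\0&1\end{smallmatrix}\rangle$ and $\langle\begin{smallmatrix}1&0\\-1&1\end{smallmatrix}\rangle$ generate all of $\mathrm{SL}(2,\Z)$), so there is no reason the image of, say, the hyperelliptic involution should be unipotent. Knowing that some finite-order elements map to $I$ also does not by itself force $A=I$.

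In the actual proofs the passage from ``$A$ has a single eigenvalue $1$'' to ``$f$ is trivial'' is handled differently: one shows, using the dimension bound $m\le 2g-1$ together with the braid relation and the large supply of commuting twists, that $f(T_a)$ and $f(T_b)$ must commute for intersecting $a,b$ (for instance via an invariant-flag argument as in Lemma~\ref{flag} of the present paper, or by directly bounding $\dim E(A,1)$ from below), whence the braid relation gives $f(T_a)=f(T_b)$ and the image of $f$ is abelian; perfectness of $\M(S_g)$ for $g\ge 3$ then finishes. Your (a)--(c) are the right ingredients, but the endgame should be ``image abelian $\Rightarrow$ trivial (resp.\ a quotient of $\Z_{10}$)'', not the finite-order argument you propose.
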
 
We say that two homomorphism $f_1$, $f_2$ from a group $G$ to a group $H$ are {\it conjugate} if there exits $h\in H$ such that $f_2(x)=hf_1(x)h^{-1}$ for $x\in G$.
\begin{theorem}[\cite{KorkSymp}]\label{KorU}
For $g\ge 3$, every nontrivial homomorphism $f\colon\M(S_{g,n})\to\GL(2g,\C)$ is  conjugate to the map $\Phi$.
\end{theorem}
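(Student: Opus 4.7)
The plan is to pin down the image $f(T_a)$ of a Dehn twist about a non-separating simple closed curve $a\subset S_{g,n}$ up to conjugacy, and then bootstrap. Since $\M(S_{g,n})$ is generated by non-separating Dehn twists, all of which are mutually conjugate, and the relations among them are explicit (braid, chain, lantern), fixing the shape of $f(T_a)$ and the relative position of $f(T_a)$ and $f(T_b)$ for an intersecting curve $b$ should determine $f$ up to conjugacy. The target is that $f(T_a)$ be unipotent with a single size-$2$ Jordan block and act trivially on a $(2g-2)$-dimensional complement, matching $\Phi(T_a)\in\Sp(2g,\Z)$.

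The key tool is Theorem \ref{FHK} applied to subsurface subgroups. For a non-separating $a$, the complement $S_{g,n}\setminus a$ is homeomorphic to $S_{g-1,n+2}$, and the inclusion induces $\M(S_{g-1,n+2})\to\M(S_{g,n})$ whose image commutes with $T_a$. Hence $f$ restricted to this subgroup preserves every generalized eigenspace of $f(T_a)$. For $g\ge 4$ one has $g-1\ge 3$, and Theorem \ref{FHK} forces the action of $\M(S_{g-1,n+2})$ on any invariant subspace of dimension at most $2g-3$ to be trivial. This rigidly constrains the Jordan form of $f(T_a)$: only a two-dimensional piece can carry a nontrivial action of the subsurface group, which is strong evidence that $f(T_a)$ acts as a transvection on a two-plane and trivially on its complement.

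One then invokes the braid relation for curves meeting once, together with chain and lantern relations, to rule out exotic eigenvalues for $f(T_a)$ (it must be $1$) and to align the actions of $f(T_{a_i})$ for different curves $a_i$ inside a common model of $H_1(S_g;\C)$. The genus $3$ case, where Theorem \ref{FHK} only gives a $\Z_{10}$-quotient for $\M(S_{2,n+2})$, will need a separate finer argument, replacing triviality by the statement that powers such as $T_a^{10}$ act trivially on the low-dimensional pieces. The principal obstacle is the Jordan-form analysis: ruling out a single long Jordan block, or multiple nontrivial eigenvalues, demands consistent use of relations among intersecting Dehn twists before the final identification with $\Phi$ becomes a matter of checking agreement on generators.
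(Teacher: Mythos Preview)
This theorem is not proved in the present paper at all: it is quoted from Korkmaz's paper \cite{KorkSymp} and used as a black box (in the proof of Theorem \ref{MNtoGLfact} for odd $g$ and in the proof of Theorem \ref{MNtoGLg-1}). There is therefore no ``paper's own proof'' to compare your proposal against.

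That said, your outline is broadly consonant with the strategy of \cite{KorkSymp}, as one can infer from the specific lemmas of that paper invoked here (e.g.\ \cite[Corollary 4.6, Lemmas 4.3, 4.7, 4.8, 5.2]{KorkSymp}): one first constrains the eigenvalue structure of the image of a Dehn twist, uses Theorem \ref{FHK} on subsurface subgroups to force flags of invariant subspaces to carry only trivial representations in low dimension, and then normalizes via braid relations to reach the standard matrices $A_i$, $B_i$, $C_j$. However, what you have written is a plan rather than a proof: phrases such as ``strong evidence that $f(T_a)$ acts as a transvection'' and ``the principal obstacle is the Jordan-form analysis'' explicitly flag the steps you have not carried out. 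In particular, ruling out nontrivial eigenvalues, excluding a single long Jordan block, and handling the $g=3$ base case each require substantial and delicate matrix arguments that you have not supplied; these are precisely the technical heart of Korkmaz's paper and cannot be waved through.
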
 
In this paper we prove analogous results for $\M(N_g)$. Fix $g\ge 3$. 
Let $R_g$ denote the quotient of $H_1(N_g)$ by its torsion. Hence, $R_g$ is a free $\Z$-module of rank $g-1$.
There is covering $P\colon S_{g-1}\to N_g$ of degree two. By a theorem of Birman and Chillingworth \cite{BC}, $\M(N_g)$ is isomorphic to the subgroup of $\M(S_{g-1})$ consisting of the isotopy classes of orientation preserving lifts of homeomorphisms of $N_g$, which gives an action of $N_g$ on $H_1(S_{g-1})$.
Let $K_g\subset H_1(S_{g-1})$ be the kernel of the composition of the induced map
$P_\ast\colon H_1(S_{g-1})\to H_1(N_g)$ with the canonical projection  $H_1(N_g)\to R_g$. Then $K_g$ is $\M(N_g)$-invariant subgroup of rank $g-1$ and 
 we have two homomorphisms
\[\Psi_1\colon\M(N_g)\to\GL(K_g)\quad\textrm{and}\quad \Psi_2\colon\M(N_g)\to\GL(H_1(S_{g-1})/K_g),\] which after fixing bases will be treated as representations of $\M(N_g)$ in $\GL(g-1,\C)$. We will see that these representations are not conjugate, although $\ker\Psi_1=\ker\Psi_2$.

Our first result is the following.
\begin{theorem}\label{MNtoGLfact}
Suppose that $n\le 1$, $g\ge 5$, $m\le g-2$ and $f\colon\M(N_{g,n})\to\GL(m,\C)$ is a nontrivial homomorphism. Then $\im(f)$ is ether $\Z_2$ or $\Z_2\times\Z_2$,   the latter case being possible only for $g=5$ or $6$.
\end{theorem}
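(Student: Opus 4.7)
The plan is to show that $f$ factors through the abelianization $\M(N_{g,n})^{\ab}$, which equals $\Z_2\times\Z_2$ for $g\in\{5,6\}$ and $\Z_2$ for $g\ge 7$; the claimed form of $\im(f)$ then follows immediately because $\im(f)$ is a nontrivial subgroup of the abelianization.

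First, I would exploit Theorem~\ref{FHK} via an embedded orientable mapping class group. Decompose $N_{g,n}=\Sigma\cup N'$ along a two-sided simple closed curve so that $\Sigma\cong S_{h,n+1}$ is orientable of maximal genus $h=\lfloor (g-1)/2\rfloor$ and $N'\cong N_{g-2h,1}$. The inclusion $\Sigma\hookrightarrow N_{g,n}$ yields an injection $\iota\colon\M(\Sigma)\hookrightarrow\M(N_{g,n})$ (no boundary component of $\Sigma$ bounds a disc in $N'$). For $g\ge 7$ odd, $h\ge 3$ and $m\le g-2=2h-1$, so Theorem~\ref{FHK} forces $f\circ\iota$ to be trivial. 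Any two-sided nonseparating simple closed curve $c\subset N_{g,n}$ with nonorientable complement is conjugate, via a self-homeomorphism of $N_{g,n}$ (the change-of-coordinates principle for two-sided curves), to a nonseparating curve inside $\Sigma$; hence $f(t_c)=I$ for every such $c$.

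Second, I would invoke a finite generating set of $\M(N_{g,n})$ consisting of such Dehn twists $t_{c_i}$, one crosscap slide $y$, and (when $n=1$) the boundary Dehn twist. Since $f$ already kills all the $t_{c_i}$, the relation $y^{2}=t_\delta$ with $\delta$ a two-sided curve, and the conjugation relations $y t_c y^{-1}=t_{y(c)}^{\pm 1}$, all of which involve only Dehn twists in $\ker f$, yield $f(y)^{2}=I$ and $[f(y),f(t_{c_i})]=I$. Consequently $f$ descends to a homomorphism out of $H_1(\M(N_{g,n});\Z)$, giving the result.

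The main obstacle is the boundary cases $g=5$ and $g$ even, and in particular $g=6$. For $g=5$ one has $h=2$, so Theorem~\ref{FHK} only bounds $\im(f\circ\iota)$ by a quotient of $\Z_{10}$; for $g$ even, the bound $2h-1=g-3$ falls one short of $m=g-2$, so Theorem~\ref{FHK} does not directly apply to $f\circ\iota$. In both cases I would work with a second overlapping orientable subsurface $\Sigma'\cong S_{h,2}\subset N_{g,n}$ whose union with $\Sigma$ contains a full system of Dehn twist curves, and exploit a chain relation supported in $\Sigma\cap\Sigma'$ to upgrade the partial control given by Theorem~\ref{FHK} on $f\circ\iota$ to the triviality of each $f(t_c)$. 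The rest of the argument then proceeds as in the odd case.
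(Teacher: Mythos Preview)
Your overall strategy --- embed a genus-$h$ orientable mapping class group via $\iota$ and feed $f\circ\iota$ to Theorem~\ref{FHK} --- is exactly how the paper begins, and for odd $g\ge 5$ it is essentially complete: with $g=2r+1$ one has $m\le 2r-1$, so Theorem~\ref{FHK} makes $\im(f\circ\iota)$ cyclic, and the paper finishes with Lemma~\ref{com_norm} (once $f(t_{\delta_1})$ and $f(t_{\delta_2})$ commute, $\im(f)$ is abelian). Your alternative ending --- kill every nonseparating two-sided twist by conjugacy and then control the crosscap transposition by a relation $y^2=t_\delta$ --- is shakier: the crosscap transposition $u_i$ does not square to a Dehn twist in general, and you would still need to place $t_{\varepsilon_2}$ inside your $\Sigma$; the paper's route via Lemma~\ref{com_norm} avoids these issues entirely.

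The real gap is the even-genus case, and especially $(g,m)=(6,4)$. For $g=2r+2$ with $r\ge 3$ and $m=2r$, the paper does \emph{not} use a second overlapping subsurface or a chain relation: it invokes Theorem~\ref{KorU}, which forces $f\circ\iota$ to be conjugate to the symplectic representation $\Phi$ of $\M(S_{r,2})$, and then derives a contradiction because $\Phi$ identifies $t_{\gamma_r}$ with $t_{\alpha_r}$ (the curves become isotopic after capping), whence $f(t_{\delta_{2r+1}})=f(t_{\varepsilon_r})$ and Lemma~\ref{tsq} gives $f(t_{\delta_1}^2)=1$, impossible since $\Phi(t_{\alpha_1})$ has infinite order. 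Your sketch gives no mechanism to exclude this $2r$-dimensional symplectic restriction, and a ``chain relation'' will not do it: the symplectic representation satisfies all chain relations. For $(g,m)=(6,4)$ the situation is worse, since Theorem~\ref{KorU} needs $r\ge 3$; the paper devotes several lemmas (Lemmas~\ref{MN4toGL2}--\ref{MN6toGL4inv}) and a Jordan-form case analysis of $f(t_{\delta_1})$ to this single case. Your proposal contains no substitute for this work, and the vague appeal to overlapping subsurfaces does not close the gap.
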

Theorem \ref{MNtoGLfact} was proved in \cite{KorkRep}, in a more general setting of punctured surfaces, under additional assumption that $m\le g-3$ if $g$ is even. Therefore, the only novelty of our result is that it also covers the case $m=g-2$ for even $g$. 
As an application of Theorem \ref{MNtoGLfact} we prove the following result, which solves Problem 3.3 in \cite{KorkProb}.
\begin{theorem}\label{MNgtoMNh}
Suppose that $g\ge 5$, $h<g$ and $f\colon\M(N_g)\to\M(N_h)$ is a nontrivial homomorphism. Then $\im(f)$ is as in Theorem \ref{MNtoGLfact}.
\end{theorem}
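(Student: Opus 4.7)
The plan is to reduce Theorem \ref{MNgtoMNh} to the linear result Theorem \ref{MNtoGLfact} via the low-dimensional linear representation $\Psi_1\colon\M(N_h)\to\GL(h-1,\C)$ of the target. Since $h<g$ we have $h-1\le g-2$, so the composition
\[ \Psi_1\circ f\colon \M(N_g)\to\GL(h-1,\C) \]
is a linear representation of $\M(N_g)$ of dimension at most $g-2$ to which Theorem \ref{MNtoGLfact} applies. It yields that $\Psi_1\circ f$ factors through $\M(N_g)^{\ab}$, so $f$ sends the commutator subgroup $G':=[\M(N_g),\M(N_g)]$ into $\ker\Psi_1$.

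The next step is to identify $\ker\Psi_1$ as a torsion-free subgroup of $\M(S_{h-1})$. Because $\ker\Psi_1=\ker\Psi_2$ and these two kernels together witness the full action of $\M(N_h)$ on $H_1(S_{h-1})$ under the Birman--Chillingworth embedding $\M(N_h)\hookrightarrow\M(S_{h-1})$, every mapping class in $\ker\Psi_1$ lies in the Torelli group $\mathcal{I}(S_{h-1})$ of the orientable double cover. Since $\mathcal{I}(S_{h-1})$ is torsion-free and, by results of Hain and Bass--Lubotzky, residually torsion-free nilpotent, any homomorphism from a perfect group into $\ker\Psi_1$ is trivial, and any torsion element of $G'$ must lie in $\ker f$.

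It remains to show $f(G')=1$. Assuming $G'$ is perfect, $f(G')$ is a perfect subgroup of a residually torsion-free nilpotent group, hence trivial; then $f$ descends to a homomorphism $\M(N_g)^{\ab}\to\M(N_h)$, and its nontrivial image is a quotient of $\Z_2$ (for $g\ge 7$) or $\Z_2\times\Z_2$ (for $g\in\{5,6\}$), exactly matching the conclusion of Theorem \ref{MNtoGLfact}. The main obstacle is precisely this structural property of $G'=[\M(N_g),\M(N_g)]$: for $g\ge 7$, where $\M(N_g)^{\ab}=\Z_2$, establishing perfectness of $G'$ ultimately amounts to controlling a quotient of $H_2(\M(N_g);\Z)$, while in the borderline cases $g\in\{5,6\}$, where the abelianization has order $4$, perfectness of $G'$ may actually fail, and a more delicate argument is needed --- for instance, exhibiting enough finite-order elements of $\M(N_g)$ whose conjugates generate $G'$, so that their vanishing in the torsion-free target $\ker\Psi_1$ forces $f(G')=1$ directly.
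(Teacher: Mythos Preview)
Your opening is exactly right and matches the paper: compose with $\Psi_1\colon\M(N_h)\to\GL(h-1,\C)$, apply Theorem~\ref{MNtoGLfact} to conclude that $f$ sends $G'=[\M(N_g),\M(N_g)]$ into $\ker\Psi_1$, and use that $\ker\Psi_1$ is torsion-free (Corollary~\ref{kerTF}, via the Torelli group of the double cover). The gap is in the next step. You try to kill all of $f(G')$ at once, which forces you to assume either that $G'$ is perfect (and to invoke residual torsion-free nilpotence of the Torelli group) or that $G'$ is generated by conjugates of torsion elements. You yourself flag perfectness of $G'$ as an unresolved obstacle, and you offer no concrete argument for the torsion-generation alternative in the cases $g=5,6$. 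As written, the proof is incomplete.

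The paper bypasses this entirely with a single well-chosen torsion element. Set $s=t_{\delta_1}\cdots t_{\delta_{g-1}}$; this has finite order in $\M(N_g)$, and by Theorem~\ref{abNg} one has $s\in G'$ for $g\ge 7$ and $g=5$, and $s^2\in G'$ for $g=6$. Torsion-freeness of $\ker\Psi_1$ then gives $f(s)=1$ (respectively $f(s^2)=1$). Now the relation $t_{\delta_{i+1}}s=st_{\delta_i}$ immediately yields $f(t_{\delta_1})=f(t_{\delta_2})$ (for $g=6$ one gets $f(t_{\delta_2})=f(t_{\delta_4})$, and since $t_{\delta_1}$ commutes with $t_{\delta_4}$ this still forces $f(t_{\delta_1})$ and $f(t_{\delta_2})$ to commute). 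Lemma~\ref{com_norm} then makes $\im(f)$ abelian. So you never need to know anything global about $G'$; one finite-order element and one braid-type relation suffice.
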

Analogous theorem for mapping class groups of orientable surfaces was proved in \cite{HK}, see also \cite{AS}. We will prove that both Theorem \ref{MNtoGLfact} and Theorem \ref{MNgtoMNh} fail for $g=4$, by showing that there is a homomorphism from $\M(N_4)$ to $\M(N_3)\cong\GL(2,\Z)$, whose image is isomorphic to the infinite dihedral group.

Suppose that $g\ge 7$. Then the abelianization of $\M(N_g)$ is $\Z_2$ and we denote by $\ab\colon\M(N_g)\to\Z_2$ the canonical projection. For $i=1,2$ we set $\Psi'_i=(-1)^\ab\Psi_i$. Our next result is the following.
\begin{theorem}\label{MNtoGLg-1}
Suppose that $g\ge 7$, $g\ne 8$ and $f\colon\M(N_g)\to\GL(g-1,\C)$ is a nontrivial homomorphism. Then either $\im(f)\cong\Z_2$, or $f$ is conjugate to one of  $\Psi_1$, $\Psi'_1$, $\Psi_2$, $\Psi'_2$.
\end{theorem}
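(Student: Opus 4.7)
The plan is to restrict $f$ to the mapping class group of an orientable subsurface of $N_g$, invoke Theorems \ref{FHK} and \ref{KorU}, and then analyze how the resulting restriction extends to all of $\M(N_g)$.

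For odd $g=2k+1$ there is an embedded $S_{k,1}\hookrightarrow N_g$ whose complement is a M\"obius band, inducing a homomorphism $\iota\colon\M(S_{k,1})\to\M(N_g)$. The composition $f\circ\iota$ is a representation into $\GL(g-1,\C)=\GL(2k,\C)$, so by Theorem \ref{KorU} it is either trivial or conjugate to the standard symplectic representation $\Phi$ of $\M(S_{k,1})$. For even $g$, the one-boundary orientable subsurface $S_{k,1}$ (with $k=(g-2)/2$) gives dimension $g-1>2k$, one above the range of Theorem \ref{KorU}; I would instead work with an orientable subsurface with two boundary components, $S_{k,2}\hookrightarrow N_g$ (complement a pair of M\"obius bands), and combine its mapping class group with the Dehn twist interchanging the two M\"obius bands, using Theorem \ref{KorU} for the closed-up surface. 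The exclusion of $g=8$ presumably reflects a breakdown of this even-case reduction at its lowest instance.

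Suppose first that $f\circ\iota$ is trivial. Then $f$ vanishes on a rich subgroup of $\M(N_g)$, and using the known generators of $\M(N_g)$ (Dehn twists about two-sided nonseparating curves plus a crosscap slide), their relations, and Theorem \ref{MNtoGLfact} applied to natural subgroups $\M(N_{h,n})\subseteq\M(N_g)$ with $h<g$, one forces the image of every generator to have order at most $2$. Since the abelianization of $\M(N_g)$ is $\Z_2$ for $g\ge 7$, this yields $\im(f)\cong\Z_2$.

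Suppose instead that $f\circ\iota$ is conjugate to $\Phi$. After an overall conjugation in $\GL(g-1,\C)$ we may assume $f\circ\iota=\Phi$, acting on a symplectic summand of $H_1(S_{g-1})$. A direct computation should show that each of $\Psi_1,\Psi_1',\Psi_2,\Psi_2'$ restricts under $\iota$ to a representation conjugate to $\Phi$, so these four representations are candidate extensions. The remaining generators of $\M(N_g)$, namely crosscap slides and Dehn twists about two-sided curves crossing $\partial S_{k,1}$, have images constrained by commutation and braid relations with elements of $\iota(\M(S_{k,1}))$, which should pin each down to a finite list of matrices in $\GL(g-1,\C)$. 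Consistency of these choices, together with the finite-order relations that certain compositions satisfy, should leave exactly four possibilities, matching $\Psi_1,\Psi_1',\Psi_2,\Psi_2'$. The main obstacle is this last step: classifying the extensions of $\Phi$ to a representation of $\M(N_g)$. In the orientable analog the abelianization of $\M(S_g)$ is trivial for large $g$, so the symplectic extension is essentially unique; here the $\Z_2$ abelianization doubles the options, and the choice between the two $\M(N_g)$-invariant subspaces $K_g$ and $H_1(S_{g-1})/K_g$ doubles them again. Eliminating further extensions, especially in the even case where the one-boundary orientable subsurface does not realize the critical dimension, is the delicate point, and the necessary exclusion of $g=8$ reflects where this part of the argument breaks down.
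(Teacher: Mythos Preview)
Your outline for odd $g=2r+1$ is essentially the paper's argument: restrict to the orientable subsurface $S'=S_{r,1}$, apply Theorem~\ref{KorU} to obtain $f(t_{\varepsilon_i})=A_i$, $f(t_{\delta_{2i}})=B_i$, $f(t_{\delta_{2j+1}})=C_j$ in some basis, and then pin down the single remaining generator $U_{2r}=f(u_{2r})$ via the commutation relations (R6--R8) and the relations (R9), (R11), (R12). One minor correction: when $f\circ\iota$ is trivial you do not need subgroup arguments or Theorem~\ref{MNtoGLfact}; the trivial image of two adjacent twists plus Lemma~\ref{com_norm} already forces $\im(f)$ abelian, hence of order at most two.

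The even case, however, has a genuine gap. For $g=2r+2$ the orientable subsurface used is $S'=S_{r,2}$, and its complement in $N_g$ is an \emph{annulus}, not a pair of M\"obius bands; more importantly, Theorem~\ref{KorU} does not apply because the target is $\GL(2r+1,\C)$ while $\Phi$ for $S'$ lands in $\GL(2r,\C)$. Your suggestion of ``closing up'' and invoking Theorem~\ref{KorU} cannot bridge this dimension mismatch. The paper instead proves Lemma~\ref{AB} by combining Lemma~\ref{dim_eigen} (itself relying on \cite[Corollary~4.6]{KorkSymp} and the flag argument of Lemma~\ref{flag}) with \cite[Lemma~4.7]{KorkSymp} to obtain the form $A_i$, $B_i$; this is where one needs $r\ge 4$, because the proof that the relevant eigenvalue equals $1$ appeals to \cite[Lemma~5.2]{KorkSymp}. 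After Lemma~\ref{AB}, the images $D_i=f(t_{\delta_{2i+1}})$ are \emph{not} automatically the matrices $C_i$: a substantial matrix computation is needed to determine them and then $U_{2r+1}$, with a change of basis distinguishing the $\Psi_1$ and $\Psi_2$ cases. The exclusion of $g=8$ thus comes from the failure of the eigenvalue argument at $r=3$ (where $\lambda=-1$ can occur, leading to the $\Sp(6,\Z_2)$ representations of Theorem~\ref{MN8toGL7}), not from any breakdown of an $S_{k,2}$ reduction. Your proposal identifies the right general shape but does not supply a mechanism for the even case; that mechanism is the heart of the paper's proof.
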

For $g=8$ other representations of $\M(N_8)$ in $\GL(7,\C)$ occur, related to the fact that there is an epimorphism $\epsilon\colon\M(N_8)\to\Sp(6,\Z_2)$ and the last group admits irreducible representations in $\GL(7,\C)$ (see \cite{Atlas}). We prove the following result.
\begin{theorem}\label{MN8toGL7}
Suppose that $f\colon\M(N_8)\to\GL(7,\C)$ is a homomorphism. Then one of the following holds.
\begin{itemize}
\item[(1)] $\im(f)\cong\Z_2$.
\item[(2)] $f$ or $(-1)^\ab f$ factors through $\epsilon\colon\M(N_8)\to\Sp(6,\Z_2)$.
\item[(3)] $f$ is conjugate to one of $\Psi_1$, $\Psi'_1$, $\Psi_2$, $\Psi'_2$. 
\end{itemize}
\end{theorem}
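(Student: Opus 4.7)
The plan is to follow the strategy of Theorem~\ref{MNtoGLg-1}, with an additional case analysis that accommodates the $7$-dimensional irreducible complex representations of $\Sp(6,\Z_2)$, which are what make $g=8$ exceptional among the cases $g\ge 7$. Fix a composition series $0=V_0\subset V_1\subset\cdots\subset V_k=\C^7$ of $f$-invariant subspaces, with irreducible subquotients $W_i=V_i/V_{i-1}$ of dimensions $d_i\ge 1$ summing to $7$.

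If $d_i\le 6=g-2$ for every $i$, then Theorem~\ref{MNtoGLfact}, applied to the induced representation $\M(N_8)\to\GL(W_i)$, shows that each such representation is trivial or has image $\Z_2$ (the $\Z_2\times\Z_2$ option is excluded because $g=8\ge 7$). Arguing as in the reducible part of Theorem~\ref{MNtoGLg-1} --- twist by appropriate characters so that every composition factor becomes trivial, then exploit the fact that any homomorphism from $\M(N_8)$ into the nilpotent unipotent radical of $\GL(7,\C)$ is strongly restricted by the structure of $[\M(N_8),\M(N_8)]$ --- one concludes that $f$ factors through the abelianization $\Z_2$, which is option~(1).

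In the remaining case $d_1=7$, the representation $f$ is irreducible. First test whether $f(\ker\epsilon)=\{I\}$: if so, $f$ descends to a representation of $\Sp(6,\Z_2)$, possibly after twisting by $(-1)^{\ab}$, placing us in option~(2). Otherwise, I would mimic the strategy of Theorem~\ref{MNtoGLg-1}: pick a standard generating set of $\M(N_8)$ consisting of Dehn twists together with a crosscap slide, analyze the Jordan forms of their $f$-images and the braid and disjointness relations they satisfy, and show that after a simultaneous conjugation the data matches one of the four homological representations $\Psi_1,\Psi'_1,\Psi_2,\Psi'_2$, giving option~(3).

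The hardest step is the irreducible case, and specifically separating option~(2) from option~(3). For $g\ne 8$ in Theorem~\ref{MNtoGLg-1} this question does not arise: no analogous $\Sp$-quotient has an irreducible complex representation of the relevant dimension, so the irreducible case lands directly in option~(3). For $g=8$, however, $\Sp(6,\Z_2)$ genuinely admits irreducible $7$-dimensional complex representations (see \cite{Atlas}), so the trichotomy must be decided by producing explicit elements of $\ker\epsilon$ whose images under each $\Psi_i$ are nontrivial, and using the values $f$ takes on them to choose between factoring through $\epsilon$ and matching one of the four homological representations.
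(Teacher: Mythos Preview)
Your outline has the right global shape, but there is a real gap in the irreducible case: the way you propose to separate options~(2) and~(3) does not work. The paper does not split on irreducibility at all; it pivots on the spectrum of $L=f(t_{\delta_1})$. Lemma~\ref{dim_eigen} (valid for all $r\ge 3$, including $r=3$) shows that if $\im(f)$ is not abelian then $L$ has an eigenvalue $\lambda$ with $\dim E(L,\lambda)=6$, and $L\sim L^{-1}$ forces $\lambda=\pm 1$. The dichotomy (2) versus (3) is exactly $\lambda=-1$ versus $\lambda=1$. If $\lambda=-1$, then $\det L=1$ forces $L$ to be diagonalizable with $L^2=I$, and a separate, substantial lemma (Lemma~\ref{MN8factSp}) shows that $f(t_{\delta_1}^2)=I$ implies $f$ or $(-1)^{\ab}f$ factors through $\epsilon$. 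If $\lambda=1$, it is the unique eigenvalue, and then Lemma~\ref{AB} (which for $r=3$ \emph{explicitly assumes} unique eigenvalue $1$) together with the even-genus argument for Theorem~\ref{MNtoGLg-1} yields option~(3).

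Your plan instead first tests whether $f(\ker\epsilon)=\{I\}$ and, if not, tries to ``mimic Theorem~\ref{MNtoGLg-1}'' to reach option~(3). The machinery you want to mimic needs, for $r=3$, precisely the hypothesis that $1$ is the unique eigenvalue of $L$; merely knowing that $f$ does not factor through $\epsilon$ gives you no control over the spectrum of $L$ and does not rule out $\lambda=-1$. What is missing is the forward implication $L^2=I\Rightarrow$ option~(2). The paper proves this by passing to the induced map $\mathfrak{S}_8\to\GL(7,\C)$, using Lemma~\ref{repsym} to identify it with the $7$-dimensional irreducible, and then computing $f(t_{\varepsilon_3})$ and $f(u_7)$ explicitly to see that both equal $f(t_{\delta_7})$, so that $t_{\delta_7}t_{\varepsilon_3}^{-1}$ and $t_{\delta_7}u_7$ --- the normal generators of $\ker\epsilon$ from Lemma~\ref{epionSp} --- lie in $\ker f$. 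Your proposal to ``produce explicit elements of $\ker\epsilon$ whose $\Psi_i$-images are nontrivial and use the values $f$ takes on them'' goes in the wrong direction: evaluating $f$ on a few elements of $\ker\epsilon$ cannot by itself establish that $f$ is conjugate to some $\Psi_i$.

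A smaller point on the reducible case: you cannot ``twist by appropriate characters so that every composition factor becomes trivial,'' since $\M(N_8)^{\ab}\cong\Z_2$ gives only one nontrivial character and the composition factors cannot be adjusted independently. The correct argument is Lemma~\ref{flag}: the commutator subgroup maps into the unipotent radical of a block upper-triangular parabolic, and since $[\M(S'),\M(S')]$ is perfect the image of $f'=f\circ\iota$ is abelian, whence $\im(f)$ is abelian by Lemma~\ref{com_norm}.
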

To prove our theorems we use the ideas and results from \cite{FH,KorkRep,KorkSymp} with necessary modifications. While the case of odd genus is relatively easy, the case of even genus requires much more effort. This phenomenon is typical for the mapping group of a nonorientable surface.

Throughout this paper we will often have to solve an equation of the form $L=R$, where $L$ and $R$ are products of matrices from $\GL(m,\C)$ with some unknown coefficients. Although the dimension $m$ is variable, the calculations of $L$ and $R$ always reduce to multiplication of blocks of size at most $7\times 7$. With some patience, such calculations could be done by hand, but it is definitely easier to use a computer. We used GAP, but of course, any program that performs symbolic operations on matrices, could be used as well.
\section{Notation and algebraic preliminaries}\label{nota}
Suppose that $m\ge 2$ is fixed. We denote by $I_m$ the identity matrix of dimension $m$. We will sometimes write simply $I$, if $m$ is clear from the context. We denote by $E_{ij}$ the elementary matrix with $1$ on the position $(i,j)$ and $0$ elsewhere. Suppose that $M_1,\dots,M_k$ are nonsingular square matrices of dimensions $m_1,\dots,m_k$, where $m_1+\cdots+m_k=m$. Then we denote by $\diag\left(M_1,\dots,M_k\right)$ the $m\times m$ matrix with $M_1,\dots,M_k$ on the main diagonal and zeros elsewhere.
Set
\[
V=\begin{pmatrix}1&1\\0&1\end{pmatrix},\quad \widehat{V}=\begin{pmatrix}1&0\\-1&1\end{pmatrix},\quad
W=\begin{pmatrix}1&1&0&-1\\0&1&0&0\\0&-1&1&1\\0&0&0&1\end{pmatrix}\]
For $2\le 2i\le m$ we define
\[A_i=\diag\left(I_{2i-2},V,I_{m-2i}\right),\quad 
B_i=\diag\left(I_{2i-2},\widehat{V},I_{m-2i}\right),\] 
and for $2\le 2j\le m-2$,
\[C_j=\diag\left(I_{2j-2},W,I_{m-2-2j}\right).\]
The proof of the following lemma is straightforward and we leave it as an exercise  (c.f. \cite[Lemma 2.2]{KorkSymp}).
\begin{lemma}\label{diag}
Suppose that $1\le k\le l\le m/2$ and $M\in\GL(m,\C)$ satisfies $A_iM=MA_i$,  $B_iM=MB_i$ and $C_jM=MC_j$ for all $i,j$ such that $k\le i\le l$, $k\le j\le l-1$.  Then $M$ has the form
\begin{equation*}\label{stars}
\begin{pmatrix}\ast&0&\ast\\
0&\lambda I_{2(l-k+1)}&0\\
\ast&0&\ast\end{pmatrix},
\end{equation*} for some $\lambda\in\C^\ast$, where the top-left $\lambda$ of the block $\lambda I_{2(l-k+1)}$  is at the position $(2k-1,2k-1)$. 
\end{lemma}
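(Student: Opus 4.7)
The plan is to exploit the block-local character of the hypotheses. The pair of relations $A_iM=MA_i$, $B_iM=MB_i$ constrains only the $2\times 2$ diagonal block of $M$ at rows/columns $2i-1,2i$ together with the rows/columns it lies in, whereas $C_jM=MC_j$ couples two consecutive such diagonal blocks. Once each $2\times 2$ diagonal block is forced to be scalar, the matrices $C_j$ will force these scalars to coincide across $j$.

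Step~1. Fix $i$ with $k\le i\le l$ and write
\[
M=\begin{pmatrix}P&Q&R\\ S&T&U\\ X&Y&Z\end{pmatrix}
\]
in the block decomposition of widths $(2i-2,\,2,\,m-2i)$, so that $T$ occupies rows/columns $2i-1,2i$. The matrices $V-I$ and $\widehat V-I$ have rank one with nonzero entries in positions $(1,2)$ and $(2,1)$ respectively. Expanding the relations $A_iM=MA_i$ and $B_iM=MB_i$ and using these two complementary rank-one supports, one kills the blocks $Q$, $S$, $U$ and $Y$; simultaneously $T$ must commute with both $V$ and $\widehat V$. Since the common commutant of $V$ and $\widehat V$ in $\mathrm{Mat}_2(\C)$ is $\C\cdot I_2$, we obtain $T=\lambda_i I_2$ for some $\lambda_i\in\C^*$.

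Step~2. Applying Step~1 at each $i\in\{k,\dots,l\}$ already shows that every row and column of $M$ indexed between $2k-1$ and $2l$ vanishes outside its $2\times 2$ diagonal block, and that these diagonal blocks are $\lambda_k I_2,\dots,\lambda_l I_2$. It remains to identify the scalars. For $k\le j\le l-1$, consider the $4\times 4$ submatrix of $M$ in rows/columns $2j-1,\dots,2j+2$: by Step~1 it equals $T':=\diag(\lambda_j I_2,\,\lambda_{j+1} I_2)$, while all other entries of $M$ in those rows/columns are zero. Consequently $C_jM=MC_j$ collapses to $WT'=T'W$; a single entry comparison (the $(1,4)$ position, which reads $-\lambda_{j+1}$ on one side and $-\lambda_j$ on the other) yields $\lambda_j=\lambda_{j+1}$. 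Chaining these equalities gives $\lambda_k=\cdots=\lambda_l=:\lambda$, and together with Step~1 this produces the shape in the statement, with $\lambda I_{2(l-k+1)}$ starting at position $(2k-1,2k-1)$. The edge case $l=k$ involves no $C_j$ and is covered by Step~1 alone.

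The only slightly delicate point is really inside Step~1: one has to check carefully that the nonzero entries of $V-I$ (in position $(1,2)$) and of $\widehat V-I$ (in position $(2,1)$) act on the blocks $Q,S,U,Y$ from the correct sides so that $A_i$ and $B_i$ together annihilate each of these four blocks, rather than only constraining them. This is entirely routine linear algebra, but it is where the particular choice of $V$ and $\widehat V$ actually matters; once it is in place, the $C_j$-step is a brief matrix computation.
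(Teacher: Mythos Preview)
Your proof is correct. The paper does not actually prove this lemma: it declares the proof ``straightforward'' and leaves it as an exercise, citing \cite[Lemma~2.2]{KorkSymp}. Your argument is exactly the natural solution to that exercise --- use $A_i$ and $B_i$ to kill the off-diagonal blocks in rows/columns $2i-1,2i$ and force the $2\times 2$ diagonal block there to be scalar, then use each $C_j$ to equate $\lambda_j$ with $\lambda_{j+1}$ --- and your verification that the $A_i,B_i$ relations together annihilate all four blocks $Q,S,U,Y$ (via the complementary supports of $V-I$ and $\widehat V-I$) is precisely the ``slightly delicate point'' you flag.
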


Suppose that $L\in\GL(m,\C)$ and $\lambda$ is an eigenvalue of $L$. Then we denote by $\#\lambda$ the multiplicity of $\lambda$. For $k\ge 1$ we denote by $E^k(L,\lambda)$ the space $\ker(E-\lambda I)^k$. Thus
$E^1(L,\lambda)$ is the eigenspace of $L$ with respect to $\lambda$, and it will be also denoted by $E(L,\lambda)$. Note that if $L'\in\GL(m,\C)$ commutes with $L$, then the spaces $E^k(L,\lambda)$ are $L'$-invariant for $k\ge 1$.

For $k\ge 2$ we denote by $\mathfrak{S}_k$  the full symmetric group of the set
$\{1,\dots,k\}$. It is generated by the transpositions $\sigma_i=(i,i+1)$ for
$1\le i\le k-1$. We will need the following result from the representation theory of the symmetric group, see for example \cite[Exercise 4.14]{FulHar}.
\begin{lemma}\label{repsym}
For $k\ge 5$, $\mathfrak{S}_k$ has no irreducible  representation (over $\C$) of  dimension $1<m<k-1$. If $k\ge 7$, then $\mathfrak{S}_k$ has two irreducible representations of dimension $k-1$: the standard one and the tensor product of the standard and sign representations. 
\end{lemma}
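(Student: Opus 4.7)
The plan is to invoke the classification of irreducible complex representations of $\mathfrak{S}_k$ by partitions $\lambda\vdash k$, together with the hook-length formula $\dim V_\lambda = k!/\prod_{(i,j)\in\lambda} h(i,j)$. Under this classification the trivial and sign representations correspond to $(k)$ and $(1^k)$ (both of dimension $1$), the standard representation to $(k-1,1)$, and its twist by the sign to the conjugate partition $(2,1^{k-2})$; the last two have dimension $k-1$, and the fact that tensoring with $\mathrm{sgn}$ corresponds to conjugating Young diagrams confirms that they are indeed the two representations in the lemma.

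For the first statement, I would show that every partition $\lambda\vdash k$ outside $\{(k),(1^k),(k-1,1),(2,1^{k-2})\}$ satisfies $\dim V_\lambda>k-1$ whenever $k\ge 5$. A direct computation from the hook-length formula gives $\dim V_{(k-2,2)}=k(k-3)/2$ and $\dim V_{(k-2,1,1)}=(k-1)(k-2)/2$, both strictly greater than $k-1$ for $k\ge 5$; the same holds for their conjugates $(2,2,1^{k-4})$ and $(3,1^{k-3})$. For any remaining partition one uses the standard monotonicity of $\dim V_\lambda$ under shifting boxes toward the interior of the Young diagram to reduce to one of the above cases. This excludes irreducible representations of dimension strictly between $1$ and $k-1$.

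For the second statement, assume $k\ge 7$. Then $k(k-3)/2\ge 14$ and $(k-1)(k-2)/2\ge 15$, both strictly larger than $k-1$, so no partition outside $\{(k-1,1),(2,1^{k-2})\}$ yields a representation of dimension exactly $k-1$. Hence the only irreducibles of dimension $k-1$ are the standard representation and its twist by the sign, as claimed. The hypothesis $k\ge 7$ cannot be relaxed: for $k=6$ the additional partitions $(3,3)$ and $(2,2,2)$ also yield irreducibles of dimension $5=k-1$, as one checks directly from the hook-length formula.

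The main obstacle in this plan is the uniform lower bound on $\dim V_\lambda$ for ``middle'' partitions, since in principle there are many of them. However, this is routine given Rasala's theorem on the smallest irreducible dimensions of $\mathfrak{S}_k$, or alternatively by a short hand enumeration for a handful of small $k$ combined with the monotonicity estimate above, consistent with the approach of the exercise in Fulton--Harris cited just before the lemma.
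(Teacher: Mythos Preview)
The paper does not actually prove this lemma: it is stated without proof and attributed to Exercise~4.14 in Fulton--Harris. Your proposal therefore goes beyond what the paper does, supplying the standard argument that exercise has in mind (classification of irreducibles by partitions, hook-length formula, reduction to the ``near-hook'' partitions $(k-2,2)$, $(k-2,1,1)$ and their conjugates). Your dimension computations and the remark about the exceptional partitions $(3,3)$, $(2,2,2)$ at $k=6$ are all correct.

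The one soft spot is the appeal to ``monotonicity of $\dim V_\lambda$ under shifting boxes toward the interior of the Young diagram'': this is not a theorem in the form you state it, and as written it does not quite pin down why every remaining partition dominates one of your four base cases. In practice you would either invoke Rasala's theorem on minimal degrees (as you mention), or argue directly via the branching rule that any $\lambda\notin\{(k),(1^k),(k-1,1),(2,1^{k-2})\}$ restricts to $\mathfrak{S}_{k-1}$ with at least two non-one-dimensional constituents, giving $\dim V_\lambda\ge 2(k-2)>k-1$ for $k\ge 5$. Either fix is short, so this is a matter of precision rather than a genuine gap.
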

\section{Mapping class group of a nonorientable surface}
Let $n\in\{0,1\}$ and $g\ge 2$.
Let us  represent $N_{g,n}$ as a sphere (if $n=0$) or a disc (if $n=1$) with $g$ crosscaps. This means that interiors of $g$ small pairwise disjoint discs should be removed from the sphere/disc, and then antipodal points in each of the resulting boundary components should be identified. Let us arrange the crosscaps as shown on Figure \ref{xiI} and number them from $1$ to $g$. 
For each nonempty subset $I\subseteq\{1,\dots,g\}$ let $\xi_I$ be the simple closed curve shown on Figure \ref{xiI}.  Note that $\xi_I$ is two-sided if and only if $I$ has even number of elements. In such case $t_{\xi_I}$ will be the Dehn twist about $\gamma_I$ in the direction indicated by arrows on Figure \ref{xiI}. 
\begin{figure}
\input{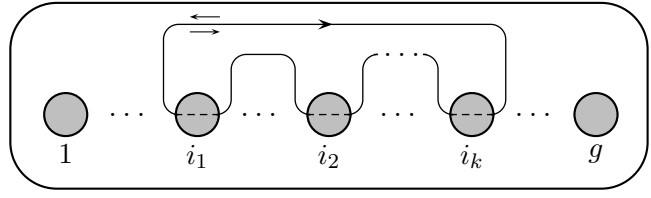}
\caption{\label{xiI} The surface $N_{g,n}$ and the curve $\xi_I$ for $I=\{i_1,i_2,\dots,i_k\}$.}
\end{figure}
We will write $\xi_i$ instead of $\xi_{\{i\}}$.
The following curves will play a special role and so we give them different names.
\begin{itemize}
\item $\delta_i=\xi_{\{i,i+1\}}$ for $1\le i\le g-1$,
\item $\varepsilon_j=\xi_{\{1,2,\dots,2j\}}$ for $2\le 2j\le g$.
\end{itemize}
Note that $\varepsilon_1=\delta_1$.

For $1\le i\le g-1$  
we define the {\it crosscap transposition} $u_i$ to be the isotopy class of the homeomorphism interchanging the $i$'th and the $(i+1)$'st crosscaps as shown on Figure \ref{U}, and equal to the identity outside a disc containing these crosscaps.

\begin{figure}
\input{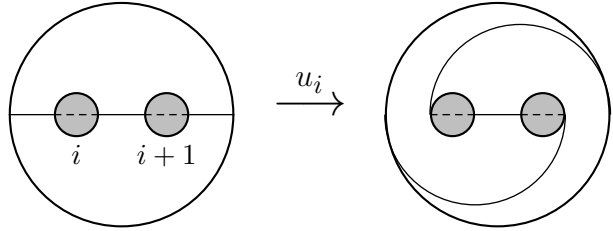}
\caption{\label{U}The crosscap transposition $u_i$.}
\end{figure}
The groups $\M(N_{1,n})$ are trivial for $n\le 1$ by \cite[Theorem 3.4]{E}, we have $\M(N_2)\cong\Z_2\times\Z_2$ by \cite{Lick}, and it follows from \cite{BC} that $\M(N_3)\cong\GL(2,\Z)$. For $g\ge 3$, a finite generating set for $\M(N_{g,n})$ was given in \cite{Chill} for $n=0$ and \cite{Stu_bdr} for $n>0$. For $n\le 1$ this set can be reduced to the one given in the 
following theorem, which can be deduced form the main result of \cite{PSz}.
\begin{theorem}\label{gener}
For $g\ge 4$ and $n\in\{0,1\}$, $\M(N_{g,n})$ is generated by
$u_{g-1}$, $t_{\varepsilon_2}$ and $t_{\delta_i}$ for $1\le i\le g-1$.
\end{theorem}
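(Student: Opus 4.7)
The plan is to derive the statement from a larger generating set already known in the literature. By the main result of \cite{PSz}, $\M(N_{g,n})$ is generated by the Dehn twists $t_{\delta_1},\dots,t_{\delta_{g-1}}$, the twists $t_{\varepsilon_j}$ for all $2\le 2j\le g$, and at least one crosscap transposition (or slide). Letting $H$ denote the subgroup of $\M(N_{g,n})$ generated by $u_{g-1}$, $t_{\varepsilon_2}$ and $t_{\delta_1},\dots,t_{\delta_{g-1}}$, it suffices to show that $t_{\varepsilon_j}\in H$ for every $2\le 2j\le g$ and that $H$ contains at least one crosscap transposition from that set; the latter is immediate from $u_{g-1}\in H$.

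The first step is to place every crosscap transposition $u_1,\dots,u_{g-1}$ in $H$. A direct check on Figure~\ref{U} shows that for each $i$ an element of the form $\alpha_i=u_{i+1}t_{\delta_i}$ (or a closely related half-twist built from $t_{\delta_i}$ and $u_{i+1}$) realises the permutation $(i,i+1)$ of the crosscaps, and a short computation yields a conjugation relation of the shape $\alpha_i u_{i+1}\alpha_i^{-1}=u_i^{\pm 1}$. Starting from $u_{g-1}\in H$ and iterating this conjugation leftwards along the row of crosscaps, one obtains $u_{g-2},u_{g-3},\dots,u_1$ in $H$.

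The second step is an induction on $j$ producing $t_{\varepsilon_j}$ for $j\ge 3$, the base case being $j=2$. For the inductive step, I would build an element $\varphi_j\in H$ as a product of the now-available $u_i$'s and $t_{\delta_i}$'s such that $\varphi_j(\varepsilon_{j-1})$ cobounds, together with $\varepsilon_j$ and a few of the $\delta_i$'s, a subsurface on which a chain or lantern relation holds. Solving that relation for $t_{\varepsilon_j}$ expresses it as a product of $\varphi_j t_{\varepsilon_{j-1}}\varphi_j^{-1}$ with conjugates of the $t_{\delta_i}$'s, all of which lie in $H$ by the inductive hypothesis together with the first step.

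The main obstacle is the second step: selecting the correct conjugating element $\varphi_j$ and identifying the precise relation that allows one to solve for $t_{\varepsilon_j}$. This is a concrete but delicate surface-topology computation carried out on the labelled surface of Figure~\ref{xiI}; it is the step where essentially all of the combinatorial work is concentrated. Once it is done, every element of the Paris--Szepietowski generating set is visibly a word in $u_{g-1}$, $t_{\varepsilon_2}$ and the $t_{\delta_i}$'s, which completes the proof.
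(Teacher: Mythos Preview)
The paper does not actually prove this theorem: it only remarks that the statement ``can be deduced from the main result of \cite{PSz}'', which provides a presentation for $\M(N_{g,n})$. Your proposal is thus filling in a deduction that the paper leaves to the reader, and your overall strategy---start from the Paris--Szepietowski generating set and show each of its generators lies in $H$---is exactly the intended one.

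Your first step is correct and can be made precise directly from relation (R11) listed in the paper: rewriting it gives
\[
u_i=(t_{\delta_i}t_{\delta_{i+1}})^{-1}u_{i+1}^{-1}(t_{\delta_i}t_{\delta_{i+1}}),
\]
so all $u_i$ lie in $H$ by descending induction from $u_{g-1}$; no picture-checking is needed. For the second step your sketch is vaguer than it needs to be. A clean way to carry it out is to observe that the curves $\delta_i$ and $\varepsilon_j$ all lie on an orientable subsurface $\Sigma\approx S_{r,s}$ of $N_{g,n}$ (this is made explicit later in the paper, in the proof of Proposition~\ref{HomeoCov} and Corollary~\ref{HomSN}), on which the $\varepsilon_i,\delta_{2i},\delta_{2i+1}$ play the role of the standard curves $\alpha_i,\beta_i,\gamma_i$. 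The classical Humphries reduction for $\M(S_{r,s})$, based on the lantern relation, then expresses each $t_{\alpha_j}$ with $j\ge 3$ as a word in $t_{\alpha_1},t_{\alpha_2}$ and the $t_{\beta_i},t_{\gamma_i}$; transported to $\Sigma$ this says exactly that each $t_{\varepsilon_j}$ lies in the subgroup generated by $t_{\varepsilon_2}$ and the $t_{\delta_i}$. So the ``main obstacle'' you flag is really the standard lantern computation inside the orientable part, and no new surface-topology work is required.
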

If $n>1$, then we consider $N_{g,n}$ as the result of gluing $S_{0,n+1}$ to  $N_{g,1}$ along the boundary component.
We will need the following relations, satisfied in $\M(N_{g,n})$. Those between Dehn twists are the well know disjointness and braid relations. 
\begin{itemize}
\item[(R1)] $t_{\delta_i}t_{\delta_j}=t_{\delta_j}t_{\delta_i}\quad$ for $|i-j|>1$,
\item[(R2)] $t_{\varepsilon_i}t_{\varepsilon_j}=t_{\varepsilon_j}t_{\varepsilon_i}\quad$ for all $i,j$,
\item[(R3)] $t_{\varepsilon_i}t_{\delta_j}=t_{\delta_j}t_{\varepsilon_i}\quad$ for  $j\ne 2i$,
\item[(R4)] $t_{\delta_i}t_{\delta_{i+1}}t_{\delta_i}=t_{\delta_{i+1}}t_{\delta_i}t_{\delta_{i+1}}\quad$ for $1\le i\le g-2$,
\item[(R5)] $t_{\varepsilon_i}t_{\delta_{2i}}t_{\varepsilon_i}=t_{\delta_{2i}}t_{\varepsilon_i}t_{\delta_{2i}}$ 
 for $2i<g$;
\end{itemize}
The relations involving crosscap transpositions are not so well known and we refer the reader to \cite{PSz} and \cite{SzepB} for their proofs.
\begin{itemize}
\item[(R6)] $t_{\delta_i}u_j=u_jt_{\delta_i}\quad$ for $|i-j|>1$,
\item[(R7)] $u_iu_j=u_ju_i\quad$ for $|i-j|>1$,
\item[(R8)] $t_{\varepsilon_i}u_j=u_jt_{\varepsilon_i}\quad$ for $j>2i$,
\item[(R9)] $u_iu_{i+1}u_i=u_{i+1}u_iu_{i+1}\quad$ for $1\le i\le g-2$,
\item[(R10)] $t_{\delta_i}u_{i+1}u_i=u_{i+1}u_it_{\delta_{i+1}}\quad$ for $1\le i\le g-2$,
\item[(R11)] $u_{i+1}t_{\delta_i}t_{\delta_{i+1}}u_i=t_{\delta_i}t_{\delta_{i+1}}\quad$ for $1\le i\le g-2$;
\item[(R12)] $t_{\delta_i}u_it_{\delta_i}=u_i\quad$ for $1\le i\le g-1$.
\end{itemize}
If follows from (R4) that all $t_{\delta_i}$ are conjugate for $1\le i\le g-1$, by (R5) $t_{\varepsilon_j}$ is conjugate to 
$t_{\delta_{2j}}$ for $2j<g$, and by (R12) $t_{\delta_i}$ is conjugate to $t_{\delta_i}^{-1}$. Similarly, by (R9) all $u_i$ are conjugate for $1\le i\le g-1$, and by (R11) $u_i$ is conjugate to $u_i^{-1}$.

For a group $G$ we denote the abelianization $G/[G,G]$ by $G^\ab$.
The following theorem is proved in \cite{KorkH1} for $n=0$ and generalised to $n>0$ in \cite{Stu_bdr}.
\begin{theorem}\label{abNg}
For $n\le 1$ and $g\ge 3$,  $\M(N_{g,n})^\ab$ has the following presentation as a $\Z$-module.
\begin{align*}
&\lr{[t_{\delta_1}], [t_{\varepsilon_2}], [u_1]\,|\,2[t_{\delta_1}]= 2[t_{\varepsilon_2}]=2[u_1]=0}\quad\textrm{if }g=4,\\
&\lr{[t_{\delta_1}], [u_1]\,|\,2[t_{\delta_1}]=2[u_1]=0}\quad\textrm{if  }g\in\{3,5,6\},\\
&\lr{[u_1]\,|\,2[u_1]=0}\quad\textrm{if }g\ge 7.
\end{align*}
In particular, for $g\ge 7$ we have $[t_{\delta_1}]=0$.
\end{theorem}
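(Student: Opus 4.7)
The plan is to compute $\M(N_{g,n})^\ab$ directly from a finite presentation of $\M(N_{g,n})$, for instance the Paris--Szepietowski presentation of \cite{PSz}. By Theorem \ref{gener}, for $g \ge 4$ the classes $[u_{g-1}]$, $[t_{\varepsilon_2}]$ and $[t_{\delta_i}]$ for $1 \le i \le g-1$ generate $\M(N_{g,n})^\ab$. The conjugacy observations recorded just after the list of relations collapse this set immediately: R4 forces $[t_{\delta_i}] = [t_{\delta_1}] =: d$ for all $i$, and R9 forces $[u_i] = [u_1] =: c$ for all $i$. Setting $e := [t_{\varepsilon_2}]$, the abelianization is therefore generated by at most three elements $d$, $c$, $e$.

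The next step is to abelianize the relations R1--R12. The commutation relations R1--R3 and R6--R8 become trivial. The braid-type relations R4 and R9, each of the form $aba = bab$, reduce to $a = b$ and are already absorbed in the previous step. The braid relation R5 with $i = 2$ is available precisely when $2 \cdot 2 < g$, i.e.\ $g \ge 5$, and it then gives $e = [t_{\delta_4}] = d$. Relation R12 abelianizes to $2d + c = c$, hence $2d = 0$, while R11 abelianizes to $2c + 2d = 2d$, hence $2c = 0$; relation R10 yields no new information. This establishes the upper bound: for $g \ge 5$ the group $\M(N_{g,n})^\ab$ is a quotient of $\langle d, c \mid 2d = 2c = 0 \rangle \cong \Z_2 \oplus \Z_2$, while for $g = 4$ the class $e$ survives as a third generator and a separate check in the full presentation shows that $2e = 0$ is the only new constraint.

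It remains to prove two things: independence of the surviving generators for $g \in \{3,4,5,6\}$, and, for $g \ge 7$, the additional identity $d = 0$. Independence is handled by producing explicit homomorphisms to $\Z_2$ (or to $\Z_2 \oplus \Z_2$) that detect each generator separately; for instance, the $\mathrm{mod}\,2$ action on $H_1(N_g;\Z_2)$ and a parity-of-crosscap-transpositions character together separate $d$ and $c$ in small genus. The genuinely hard step is producing the extra relation that kills $d$ when $g \ge 7$: one must exhibit an identity in $\M(N_{g,n})$ whose abelianization reads $kd = 0$ for some odd $k$, which combined with $2d = 0$ forces $d = 0$. A natural source is a chain or star relation supported on a nonorientable subsurface of $N_{g,n}$ that only exists once $g$ is large enough, or a lantern-style identity specific to the nonorientable setting. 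Extracting such a relation from \cite{PSz} and verifying that its abelianization has the required form is the main obstacle; once this is done, the rank count from Step 2 combined with the independence homomorphisms closes the argument.
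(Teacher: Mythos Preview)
The paper does not prove this theorem: the sentence preceding the statement says explicitly that it is proved in \cite{KorkH1} for $n=0$ and extended to $n>0$ in \cite{Stu_bdr}. So there is nothing to compare your argument against; the result is quoted as background.

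Your sketch is a reasonable outline of how such a computation would go, and the reductions you describe (R4 collapsing the $[t_{\delta_i}]$, R9 collapsing the $[u_i]$, R12 giving $2d=0$, R11 giving $2c=0$, R5 identifying $e=d$ once $g\ge 5$) are all correct. However, the proposal is not a proof: you yourself flag the decisive step---producing an identity in the group whose abelianization gives an \emph{odd} multiple of $d$ when $g\ge 7$---as ``the main obstacle'' and do not carry it out. Without that relation you only get that $\M(N_{g,n})^\ab$ is a quotient of $\Z_2\oplus\Z_2$ for $g\ge 5$, which is strictly weaker than the claim for $g\ge 7$. The relevant relation in Korkmaz's original argument is not one of R1--R12; it comes from a chain/lantern-type identity available only in higher genus, and extracting it is genuine work, not a formality.

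Two smaller gaps: Theorem~\ref{gener} is stated for $g\ge 4$, so the case $g=3$ (where $\M(N_3)\cong\GL(2,\Z)$) needs a separate argument that you do not indicate. And the relations R1--R12 are relations that hold in the group, not a complete set of defining relations for the generating set of Theorem~\ref{gener}; to compute the abelianization honestly you must work with the full Paris--Szepietowski presentation, which has a larger generating set (including all the $u_i$) and additional relations beyond those listed here. You acknowledge this at the start but then argue as if R1--R12 were exhaustive.
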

\begin{lemma}\label{com_norm}
For $g\ge 5$ and $n\le 1$ let $\alpha$, $\beta$ be two-sided curves on $N_{g,n}$, intersecting transversally in one point.
If $f\colon\M(N_{g,n})\to G$ is a homomorphism, such that $f(t_\alpha)$ commutes with $f(t_\beta)$, then $\im(f)$ is abelian.
\end{lemma}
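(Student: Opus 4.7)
Set $a=f(t_\alpha)$ and $b=f(t_\beta)$. My plan begins with a purely algebraic observation. Since $\alpha,\beta$ are two-sided and meet at a single point, $t_\alpha$ and $t_\beta$ satisfy the braid relation $t_\alpha t_\beta t_\alpha = t_\beta t_\alpha t_\beta$, hence $aba=bab$ in $G$. Combined with $ab=ba$, this rewrites as $a^2b=ab^2$, whence $a=b$ after cancelling $a$ on the left and $b$ on the right. So the commutation hypothesis actually forces the stronger statement $f(t_\alpha)=f(t_\beta)$, and the remainder of the proof consists of propagating this equality to the generating set for $\M(N_{g,n})$ supplied by Theorem \ref{gener}.

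I next reduce to the standard pair $(\alpha,\beta)=(\delta_1,\delta_2)$ by change of coordinates. A regular neighborhood of $\alpha\cup\beta$ is an $S_{1,1}$, whose complement in $N_{g,n}$ is connected (for $g\ge 5$ it has nonorientable genus $g-2\ge 3$) and must be nonorientable, since otherwise $N_{g,n}$ would be a union along a circle of two orientable surfaces. Thus the complement is homeomorphic to $N_{g-2,n+1}$, exactly as for the pair $(\delta_1,\delta_2)$, so the change-of-coordinates principle supplies $h\in\M(N_{g,n})$ with $h\alpha=\delta_1$ and $h\beta=\delta_2$. Conjugating the commutation hypothesis by $f(h)$ shows that the same hypothesis holds for $(\delta_1,\delta_2)$, so I may assume this from the start; set $A:=f(t_{\delta_1})=f(t_{\delta_2})$.

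Propagation proceeds by induction along the chain $\delta_1,\ldots,\delta_{g-1}$. Suppose $f(t_{\delta_j})=A$ for $1\le j\le k$ with $k\ge 2$. By (R1), $\delta_{k+1}$ is disjoint from $\delta_1$, so $f(t_{\delta_{k+1}})$ commutes with $A$; together with the braid relation (R4) between $\delta_k$ and $\delta_{k+1}$, the first-paragraph argument applied to $A$ and $f(t_{\delta_{k+1}})$ gives $f(t_{\delta_{k+1}})=A$. Hence $f(t_{\delta_i})=A$ for $1\le i\le g-1$. The same reasoning with (R3) (namely $\varepsilon_2$ disjoint from $\delta_1$) and (R5) (braid between $\varepsilon_2$ and $\delta_4$, which exists because $g\ge 5$) yields $f(t_{\varepsilon_2})=A$. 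Finally, (R6) gives $[u_{g-1},t_{\delta_1}]=1$, so $f(u_{g-1})$ commutes with $A$. By Theorem \ref{gener} the image of $f$ is generated by the two commuting elements $A$ and $f(u_{g-1})$, and is therefore abelian.

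The only step requiring genuine care is the change of coordinates, where in the nonorientable setting one must rule out the exotic type of nonseparating two-sided curve with orientable complement (which arises only for odd $g$). The argument above disposes of this in a single sentence using only that $N_{g,n}$ itself is nonorientable; everything else is routine bookkeeping with the Dehn twist relations and the braid-commutation trick of the first paragraph.
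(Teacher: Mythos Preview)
Your proof is correct and follows the same overall strategy as the paper: reduce to the standard configuration via change of coordinates, use the braid relation together with commutation to force equality of the images of adjacent twists, and then conclude from Theorem~\ref{gener} that the image is generated by two commuting elements. The one difference is tactical: the paper invokes a separate change-of-coordinates homeomorphism taking $(\alpha,\beta)$ to each adjacent pair $(\delta_i,\delta_{i+1})$ (and to $(\varepsilon_2,\delta_4)$), whereas you use change of coordinates only once to reach $(\delta_1,\delta_2)$ and then propagate along the chain by combining the disjointness relations (R1), (R3) with the braid relations (R4), (R5). Your route is slightly more economical. One small point you glossed over but which does not affect the argument: on a nonorientable surface conjugation by $h$ takes $t_\alpha$ to $t_{h(\alpha)}^{\pm 1}$ rather than necessarily to $t_{h(\alpha)}$ (the paper records this as $\varepsilon_j\in\{-1,1\}$); since you only transport the \emph{commutation} hypothesis, the sign is harmless.
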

\begin{proof}
Let $N=N_{g,n}$ and  $\M=\M(N_{g,n})$. Fix a regular neighbourhood $A$ of $\alpha\cup\beta$. Note that $A$ is homeomorphic to $S_{1,1}$ and $N\backslash A$ is homeomorphic to $N_{g-2,1}$. It follows that for each $i\le g-2$ there is a homeomorphism $h\colon N\to N$ such that $h(\alpha)=\delta_i$ and $h(\beta)=\delta_{i+1}$. 
It follows that $ht_\alpha h^{-1}=t^{\varepsilon_1}_{\delta_i}$ and $ht_\beta h^{-1}=t^{\varepsilon_2}_{\delta_{i+1}}$, where $\varepsilon_j\in\{-1,1\}$ for $j=1,2$. Hence $f(t_{\delta_i})$ commutes with $f(t_{\delta_{i+1}})$ and by the braid relation (R4) $f(t_{\delta_i})=f(t_{\delta_{i+1}})$. Analogously,
$f(t_{\varepsilon_2})=f(t_{\delta_4})$.
By Theorem \ref{gener}, $\im(f)$ is generated by $f(t_{\delta_1})$ and $f(u_{g-1})$, and since $u_{g-1}$ commutes with $t_{\delta_1}$, thus $\im(f)$ is abelian. 
 \end{proof}
\begin{lemma}\label{tsq}
Suppose that $g\ge 4$ and $f\colon\M(N_{g,n})\to G$ is a homomorphism.
If $f(t_{\varepsilon_i})=f(t_{\delta_{j}})$ for some $2i+1\le j\le g-1$, then $f(t^2_{\delta_1})=1$. 
\end{lemma}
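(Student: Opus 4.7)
The plan is to show that the common value $x := f(t_{\varepsilon_i}) = f(t_{\delta_j})$ is an involution in the target group, and then transfer this conclusion to $f(t_{\delta_1})$ via a conjugacy argument. The only relations I would need are (R4), (R8), and (R12).

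First, the hypothesis $2i+1 \le j$ ensures $j > 2i$, so relation (R8) gives $t_{\varepsilon_i} u_j = u_j t_{\varepsilon_i}$. Applying $f$, the element $f(u_j)$ commutes with $x = f(t_{\varepsilon_i})$. Next, I would invoke relation (R12) at index $j$: rearranging $t_{\delta_j} u_j t_{\delta_j} = u_j$ yields $u_j^{-1} t_{\delta_j} u_j = t_{\delta_j}^{-1}$, so after applying $f$ and using $f(t_{\delta_j}) = x$,
\[ f(u_j)^{-1}\, x\, f(u_j) = x^{-1}. \]
Combining this with the commutation just established gives $x = x^{-1}$, i.e.\ $x^2 = 1$.

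Finally, the braid relation (R4) shows that consecutive Dehn twists $t_{\delta_k}$ and $t_{\delta_{k+1}}$ are conjugate in $\M(N_{g,n})$, so by transitivity $t_{\delta_1}$ and $t_{\delta_j}$ are conjugate for every $1 \le j \le g-1$; the hypothesis $g \ge 4$ together with $2i+1 \le j \le g-1$ guarantees that some valid $j$ exists. Consequently $f(t_{\delta_1})$ is conjugate to $x$ in $G$, and therefore $f(t_{\delta_1}^2) = f(t_{\delta_1})^2$ is conjugate to $x^2 = 1$, forcing $f(t_{\delta_1}^2) = 1$. I do not foresee any real obstacle, since the argument is just a direct combination of the commutation relation (R8) with the inversion relation (R12), followed by transporting the conclusion along the single conjugacy class of $\{t_{\delta_k}\}_{1 \le k \le g-1}$.
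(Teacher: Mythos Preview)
Your proposal is correct and follows essentially the same argument as the paper: set $x=f(t_{\varepsilon_i})=f(t_{\delta_j})$ and $y=f(u_j)$, use (R8) to get $xy=yx$ and (R12) to get $xyx=y$, deduce $x^2=1$, and finish via the conjugacy of $t_{\delta_j}$ with $t_{\delta_1}$ coming from (R4). The only superfluous remark is the aside about $g\ge 4$ guaranteeing that ``some valid $j$ exists'' --- the indices $i,j$ are given in the hypothesis, so there is nothing to check there.
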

\begin{proof}
Set $x=f(t_{\varepsilon_i})=f(t_{\delta_j})$ and $y=f(u_j)$. By the relation (R8) we have $xy=yx$, and by (R12) $xyx=y$. Hence $x^2=1$ which finishes the proof, because $t_{\delta_j}$ is conjugate to $t_{\delta_1}$.
\end{proof}
\begin{figure}
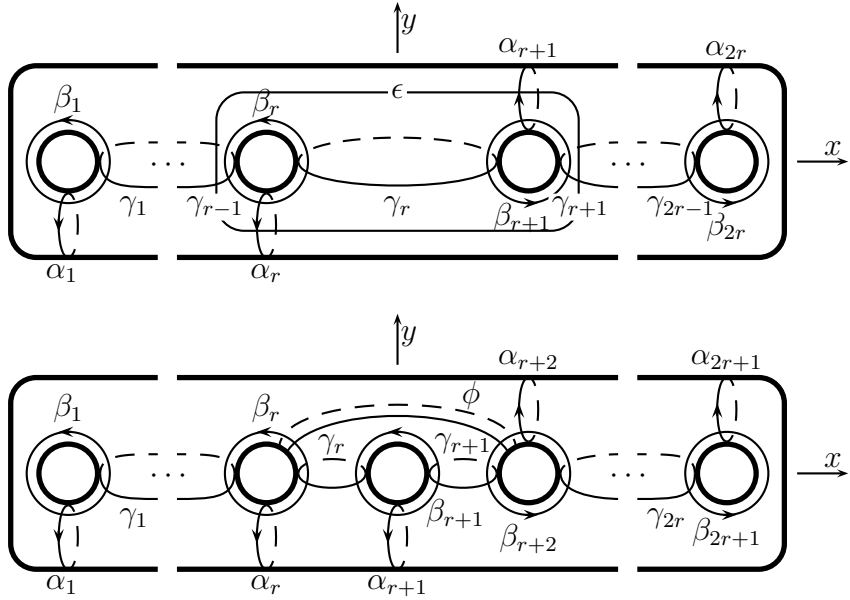

\begin{tabular}{l}
\input{fig3}\\
\input{fig4}
\end{tabular}
\caption{\label{S2r}The surface $S_{g-1}$ for $g=2r+1$ (top) and $g=2r+2$ (bottom).}
\end{figure}
Let $g=2r+s$, where $r\ge 1$, $s\in\{1,2\}$ and $S=S_{g-1}$.
Consider $S$ as being embedded in $\R^3$ in such a way that it is invariant under the reflections about the $xy$, $xz$ and $yz$ planes, as shown on Figure \ref{S2r}. We define a homeomorphism $j\colon S\to S$ as $j(x,y,z)=(-x,-y,-z)$. The quotient space $S/j$ is a nonorientable surface of genus $g$ and the projection $p\colon S\to S/j$ is a covering map of degree $2$.
Let $S'$ be the subsurface of $S$ consisting of points $(x,y,z)\in S$ with
$x\le-\varepsilon$, where $\varepsilon$ is a positive constant, so small that
$S'$ is homeomorphic to $S_{r,s}$. If $g$ is even, then one of the boundary components of $S'$ is isotopic to $\alpha_{r+1}$. In this paper we identify isotopic curves, and therefore we will treat $\alpha_{r+1}$ as a curve on $S'$.
 Note that the restriction of $p$ to $S'$ is an embedding. For odd $g$ we define $\gamma'$ to be the arc of $\gamma_r$ consisting of points with $x\le 0$. For even $g$ we define $\beta'$ to be the arc of $\beta_{r+1}$ consisting of points with $x\le 0$. Note that $p(\gamma')$ and $p(\beta')$ are one-sided simple closed curves on $S/j$.

\begin{prop}\label{HomeoCov}
There is a homeomorphism $\varphi\colon S_{g-1}/j\to N_{g}$ such that, for $P=\varphi\circ p$, up to isotopy
\begin{itemize}
\item[(1)] $P(\beta_i)=\delta_{2i}$ for $1\le i\le r$, 
\item[(2)] $P(\alpha_i)=\varepsilon_i$ for $2\le 2i\le g$, 
\item[(3)] $P(\gamma_i)=\delta_{2i+1}$ for $2\le 2i\le g-2$, 
\item[(4)] $P(\gamma')=\xi_g$ if $g$ is odd,
\item[(5)] $P(\beta')=\xi_g$ if $g$ is even.
\end{itemize} 
\end{prop}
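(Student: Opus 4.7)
The plan is to build $\varphi$ by matching $S/j$ with $N_g$ handle-by-handle, using $S'$ as a near-fundamental domain. The first step is to establish that $S/j \cong N_g$ abstractly: the involution $j$ is free (since the origin lies off $S$ by the symmetric embedding) and orientation-reversing (being the restriction of $-\operatorname{id}$ on $\R^3$), so $S/j$ is closed and nonorientable, with $\chi(S/j) = \chi(S_{g-1})/2 = (4-2g)/2 = 2-g = \chi(N_g)$; by the classification of surfaces, $S/j \cong N_g$.

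Next, I would decompose $S/j$ using $S'$. Since $p|_{S'}$ is an embedding, $F := p(S')$ is a copy of $S_{r,s}$ inside $S/j$. Its complement is the image of a thin $j$-invariant slab $M = \{|x|\le\varepsilon\}\cap S$ around the central curve $\{x=0\}\cap S$; on each component of $M$ the map $j$ acts antipodally on the core circle, so $(S/j)\setminus\operatorname{int}(F)$ is a disjoint union of $s$ M\"obius bands. In parallel, I would present $N_g$ as in Figure~\ref{xiI} and cut along $s$ two-sided simple closed curves enclosing the last $s$ crosscaps, exhibiting $N_g$ as $S_{r,s}$ with $s$ M\"obius bands attached. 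The construction of $\varphi$ then amounts to choosing a handle-preserving homeomorphism between the two decompositions: send the $i$-th handle of $S'$ (for $1\le i\le r$) to the disc containing crosscaps $\{2i-1,2i\}$, and send each M\"obius band of $(S/j)\setminus F$ to the M\"obius band neighbourhood of a corresponding remaining crosscap.

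With such a $\varphi$ fixed, (1)--(5) become visual checks. For (1), the meridian $\beta_i$ of the $i$-th handle of $S$ bounds a disc on $S$ meeting only that handle, so $P(\beta_i)$ bounds the disc enclosing crosscaps $\{2i-1,2i\}$, which is $\delta_{2i}$. For (2), $\alpha_i$ may be isotoped on $S$ to a loop encircling the first $i$ handles of $S'$, whose $P$-image encircles crosscaps $1,\ldots,2i$, matching $\varepsilon_i$. For (3), the belt curve $\gamma_i$ between handles $i$ and $i+1$ projects to the boundary of the disc around $\{2i,2i+1\}$, which is $\delta_{2i+1}$. For (4) and (5), the half-arcs $\gamma'$ and $\beta'$ have their endpoints at pairs of $j$-antipodal points on the central curve, so $p(\gamma')$ and $p(\beta')$ descend to one-sided simple closed curves passing through the unique (odd $g$) or leftmost (even $g$) of the last crosscaps, and an inspection confirms these are isotopic to $\xi_g$.

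The main obstacle is combinatorial bookkeeping: one must choose the handle-to-crosscap-pair assignment consistently so that all five correspondences hold simultaneously, not merely up to a permutation. Orientation discrepancies between the arrows defining $t_{\delta_i}$ in Figure~\ref{xiI} and the meridian orientations on $S$ are harmless, because $t_{\delta_i}$ is conjugate to $t_{\delta_i}^{-1}$ by relation (R12) and because (1)--(5) concern only unoriented isotopy classes. In the even case a further subtlety arises: the second boundary component of $S'$ must project to a curve bounding the M\"obius band around crosscap $g-1$, consistent with $\alpha_{r+1}$ being that boundary component and with the chosen placement of the two terminal crosscaps; this needs a separate visual check in the $s=2$ picture.
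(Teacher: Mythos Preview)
Your overall strategy---decompose both $S/j$ and $N_g$ into a copy of $S_{r,s}$ plus a complementary piece, then match---is the same as the paper's, but two of your concrete claims are incorrect and cause the argument to break down.

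First, in the even case your assertion that $(S/j)\setminus\operatorname{int}(F)$ is a disjoint union of two M\"obius bands is wrong: it is a single annulus. One boundary component of $S'$ is isotopic to $\alpha_{r+1}$, and $P_\ast(a_{r+1})=x_1+\cdots+x_g$ is the nonzero torsion element of $H_1(N_g)$, so $p(\alpha_{r+1})$ is nonseparating in $S/j$. Hence the complement of $p(S')$ is connected, and being a surface with $\chi=0$ and two boundary circles it must be an annulus. Your proposed decomposition of $N_g$ by cutting out neighbourhoods of the last two crosscaps gives a subsurface with complement two M\"obius bands, so no homeomorphism can carry one decomposition to the other. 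The paper instead takes $\Sigma$ to be a regular neighbourhood of the chain $\delta_1\cup\cdots\cup\delta_{g-1}$; its complement in $N_g$ is likewise an annulus, and the match goes through.

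Second, your visual checks for (1) and (3) rest on a faulty picture. The curve $\beta_i$ does not bound a disc on $S$ (it is an essential curve), and $\delta_{2i}=\xi_{\{2i,2i+1\}}$ is not the boundary of a region containing crosscaps $\{2i-1,2i\}$; it passes through crosscaps $2i$ and $2i+1$. Thus the na\"{\i}ve bijection ``handle $i\leftrightarrow$ crosscaps $\{2i-1,2i\}$'' does not yield $\beta_i\mapsto\delta_{2i}$, and no shift of this bijection makes (1), (2), (3) hold simultaneously. The paper avoids this by working with chains of curves rather than handles: the sequence $\alpha_1,\beta_1,\gamma_1,\beta_2,\gamma_2,\ldots$ on $S'$ has exactly the same intersection pattern as $\delta_1,\delta_2,\ldots,\delta_{g-1}$ on $N_g$, and the pairs-of-pants bounded by $\alpha_i,\alpha_{i+1},\gamma_i$ correspond to those bounded by $\varepsilon_i,\varepsilon_{i+1},\delta_{2i+1}$. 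The change-of-coordinates principle for such curve configurations then produces $\varphi$ satisfying (1)--(3) directly, and (4)--(5) are handled by examining intersections of $\xi_g$ with the chain.
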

\begin{proof}
Observe that the curves $\delta_i$ for $1\le i\le g-1$ form a chain of two-sided curves, which means that $\delta_i$ and $\delta_j$ intersect at one point if $|i-j|=1$, and they are disjoint otherwise. It follows that a regular neighbourhood of the union of $\delta_i$ for $1\le i\le g-1$ is homeomorphic to $S_{r,s}$. Let $\Sigma$ be such a neighbourhood, which may be taken to contain the curves $\varepsilon_i$ for $2\le 2i\le g$ (if $g$ is even, then one of the boundary components of $\Sigma$ is isotopic to $\varepsilon_{r+1}$). Note that $\varepsilon_i$, $\varepsilon_{i+1}$ and $\delta_{2i+1}$ bound a pair of pants for
$2\le 2i\le g-2$. It follows that there exists a homeomorphism $\varphi\colon S_{g-1}/j\to N_{g}$ such that, for $P=\varphi\circ p$, we have $P(S')=\Sigma$ and the conditions (1, 2, 3) are satisfied. Observe that $N_g\backslash\Sigma$ is a M\"obius strip (if $g$ is odd) or an annulus (if $g$ is even), whose core (isotopic to $\xi_{\{1,\dots,g\}}$) intersects $\xi_g$ once. By looking at the intersection of $\xi_g$ with the curves $\delta_i$, $\varepsilon_j$ it is easy to see that $\varphi$ can be taken to satisfy also the condition (4) or (5).
\end{proof}
\begin{cor}\label{HomSN}
There is a homomorphism $\iota\colon\M(S')\to\M(N_{g,n})$ such that
\begin{itemize}
\item $\iota(t_{\beta_i})=t_{\delta_{2i}}$ for $1\le i\le r$, 
\item $\iota(t_{\alpha_i})=t_{\varepsilon_i}$ for $2\le 2i\le g$, 
\item $\iota(t_{\gamma_i})=t_{\delta_{2i+1}}$ for $2\le 2i\le g-2$,
\end{itemize} 
where the Dehn twists about the curves on $S'$ are right with respect to the standard orientation.
\end{cor}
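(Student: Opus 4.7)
The plan is to realise $\iota$ as the ``extend by identity'' map of a subsurface inclusion. By Proposition~\ref{HomeoCov}, $P|_{S'}$ is an embedding onto $\Sigma:=P(S')\subset N_g$, which is the regular neighbourhood of $\delta_1\cup\cdots\cup\delta_{g-1}$ homeomorphic to $S_{r,s}$; its complement in $N_g$ is a M\"obius strip (if $g$ is odd) or an annulus (if $g$ is even). After isotopy, $\Sigma$ may be taken disjoint from $\partial N_{g,n}$: this is automatic for $n=0$, and for $n\ge 1$ one places the boundary circles of $N_{g,n}$ inside the complement of $\Sigma$. We will therefore view $S'$ as a compact subsurface of $N_{g,n}$ via the homeomorphism $P|_{S'}\colon S'\to\Sigma$.

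The main step is then to invoke the standard fact that for any compact subsurface $\Sigma\subset N$ with $\partial\Sigma\cap\partial N=\emptyset$, the ``extend by the identity on $N\setminus\Sigma$'' operation induces a homomorphism $\M(\Sigma)\to\M(N)$: it is multiplicative because outside $\Sigma$ both extensions are the identity, and it respects isotopies relative to $\partial\Sigma$. Applying this to $\Sigma\hookrightarrow N_{g,n}$ and precomposing with the isomorphism $\M(S')\cong\M(\Sigma)$ induced by $P|_{S'}$ produces the required $\iota$.

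To finish, one only has to track Dehn twists. For each two-sided simple closed curve $c\subset S'$, the right twist $t_c$ may be represented by a homeomorphism supported in an arbitrarily small one-sided collar of $c$; its extension is then supported in a one-sided collar of $P(c)$ in $N_{g,n}$ and represents the Dehn twist about $P(c)$. Specialising to $c\in\{\beta_i,\alpha_i,\gamma_i\}$ and using parts~(1), (2), (3) of Proposition~\ref{HomeoCov} will give the three required formulas. The only genuinely delicate point, which I expect to be the main (mild) obstacle, is verifying that the orientation of an annular neighbourhood of $P(c)$ transported from $S'$ via $P$ agrees with the convention fixed by the arrows in Figure~\ref{xiI}; this is a case-by-case inspection of the pictures of $S'\subset S$ and of the chain of curves $\delta_i$, $\varepsilon_j$ on $N_g$.
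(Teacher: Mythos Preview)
Your proposal is correct and follows essentially the same route as the paper: identify $S'$ with $\Sigma=P(S')$ via $P|_{S'}$, then compose the induced isomorphism $\M(S')\cong\M(\Sigma)$ with the inclusion-induced homomorphism $\M(\Sigma)\to\M(N_{g,n})$. Your version is more explicit about tracking Dehn twists and the orientation convention, and note that the embedding property of $P|_{S'}$ comes from the \emph{proof} of Proposition~\ref{HomeoCov} rather than its statement.
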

\begin{proof}
By the proof of Proposition \ref{HomeoCov}, the restriction of $P$ to $S'$ is a homeomorphism onto $\Sigma$ satisfying the conditions (1,2,3). There is an induced isomorphism $\M(S')\to\M(\Sigma)$, which may be composed with the homomorphism $\M(\Sigma)\to\M(N_{g,n})$ induced by the inclusion $\Sigma\hookrightarrow N_{g,n}$, for any $n\ge 0$, to obtain $\iota$. 
\end{proof}
For any homeomorphism $h\colon N_g\to N_g$ there is a unique orientation preserving lift $\widetilde{h}\colon S_{g-1}\to S_{g-1}$ such that  $h\circ P=P\circ\widetilde{h}$. By \cite{BC}, the mapping $h\mapsto\widetilde{h}$ induces a monomorphism $\theta\colon\M(N_g)\to\M(S_{g-1})$. The following proposition follows from \cite{BC} and \cite[Theorem 10]{SzepB}, where the lift of a crosscap transposition is determined.
\begin{prop}\label{lifts}
There is a monomorphism $\theta\colon\M(N_g)\to\M(S_{g-1})$ such that
\[\theta(t_{\varepsilon_i})=t_{\alpha_{i}}t^{-1}_{\alpha_{g-i}},\quad  \theta(t_{\delta_{2i}})=t_{\beta_{i}}t^{-1}_{\beta_{g-i}},\quad  
\theta(t_{\delta_{2j+1}})=t_{\gamma_{j}}t^{-1}_{\gamma_{g-1-j}},\] for 
$1\le i\le r$, $2\le 2j\le g-2$ and
\[\theta(u_{g-1})=\begin{cases}
t^{-1}_{\beta_r}t_{\beta_{r+1}}(t_{\gamma_r}t_{\beta_r}t_{\beta_{r+1}})^2t^{-1}_{\epsilon}&\textrm{if\ }g=2r+1,\\
t^{-1}_{\gamma_r}t_{\gamma_{r+1}}(t_{\beta_{r+1}}t_{\gamma_r}t_{\gamma_{r+1}})^2t^{-1}_{\phi}&\textrm{if\ }g=2r+2.
\end{cases}\]
\end{prop}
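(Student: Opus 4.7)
The plan is to invoke the Birman--Chillingworth embedding from \cite{BC}, which associates to each homeomorphism $h\colon N_g\to N_g$ its unique orientation-preserving lift $\widetilde h$ (characterised by $h\circ P = P\circ\widetilde h$), and to observe that $[h]\mapsto[\widetilde h]$ defines an injective homomorphism $\theta\colon\M(N_g)\to\M(S_{g-1})$. The remaining content of the proposition is then the identification of $\theta$ on the specified generators, which I would carry out as follows.

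First I would treat the three Dehn-twist formulas. For any two-sided simple closed curve $\delta\subset N_g$, the preimage $P^{-1}(\delta)$ consists of two disjoint simple closed curves $\widetilde\delta_+,\widetilde\delta_-$ on $S_{g-1}$ that are exchanged by the deck involution $j$ (the regular annular neighbourhood of $\delta$ is orientable, so its preimage is a disjoint union of two annuli). The lift of $t_\delta$ is supported in this preimage, and because $j$ reverses orientation, $j$-equivariance forces the lift to act as a right twist on one annulus and a left twist on the other; hence $\theta(t_\delta)=t_{\widetilde\delta_+}t_{\widetilde\delta_-}^{-1}$ up to labelling. Combining this with Proposition \ref{HomeoCov} and the manifest $j$-symmetry of $S_{g-1}\subset\R^3$ visible in Figure \ref{S2r} (which swaps $\alpha_i\leftrightarrow\alpha_{g-i}$, $\beta_i\leftrightarrow\beta_{g-i}$, $\gamma_j\leftrightarrow\gamma_{g-1-j}$), and fixing sign conventions via the orientation-preserving embedding $P|_{S'}\colon S'\hookrightarrow N_g$, I would obtain the three displayed formulas for $\theta$ on $t_{\varepsilon_i}$, $t_{\delta_{2i}}$ and $t_{\delta_{2j+1}}$.

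For the formula on $u_{g-1}$ I would simply quote \cite[Theorem 10]{SzepB}, where the orientation-preserving lift of a crosscap transposition is written out explicitly in terms of Dehn twists about the curves in Figure \ref{S2r}, with the corrective factor $t_\epsilon^{-1}$ (resp.\ $t_\phi^{-1}$) arising from the Dehn twist about the boundary of a disc containing the two crosscaps being exchanged. The main (and essentially only) obstacle will be the sign bookkeeping for the Dehn twists; one has to check carefully that the labelling of the curves in Figure \ref{S2r} and the choice of orientations agree with the conventions of \cite{BC} and \cite{SzepB}, but once this is pinned down the proposition follows at once from the two cited references.
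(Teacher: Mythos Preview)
Your proposal is correct and follows essentially the same route as the paper: the paper does not give a formal proof but simply states that the proposition follows from \cite{BC} (for the existence of $\theta$ and the lifts of the Dehn twists) together with \cite[Theorem~10]{SzepB} (for the lift of the crosscap transposition). Your write-up expands on this by explaining why the lift of a twist about a two-sided curve is a product of a right and a left twist about the two preimage curves, but this is just an elaboration of the same argument rather than a different approach.
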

\section{Homological representations}
Fix $g\ge 3$ and let $S=S_{g-1}$, $N=N_g$ and $P\colon S\to N$ be as in the previous section. The group $H_1(S)$ is a free $\Z$-module of rank $2(g-1)$ and the homology classes
$a_i=[\alpha_i]$, $b_i=[\beta_i]$ for $1\le i\le g-1$ form its basis, which is a symplectic basis with respect to the algebraic intersection form:
\[\lr{a_i,a_j}=0,\quad \lr{b_i,b_j}=0,\quad \lr{a_i,b_j}=\delta_{ij}.\]
Let $\Phi\colon\M(S)\to\Sp(H_1(S))$ be the homomorphism induced by the action of $\M(S)$ on $H_1(S)$. If $\gamma$ is an oriented simple closed curve on $S$, 
$[\gamma]\in H_1(S)$ is its homology class, and $t_\gamma$ is the right Dehn twist, then $\Phi(t_\gamma)$ is the transvection 
\begin{equation}\label{transv}
\Phi(t_\gamma)(h)=h+\lr{[\gamma],h}[\gamma],\quad\textrm{for\ }h\in H_1(S).
\end{equation}
From this formula we immediately obtain that, with respect to the basis
$(a_1, b_1,\dots,a_{g-1}, b_{g-1})$, we have
\[\Phi(t_{\alpha_i})=A_i,\quad  \Phi(t_{\beta_i})=B_i,\quad  
\Phi(t_{\gamma_j})=C_j,\]
for $1\le i\le g-1$, $1\le j\le g-2$,  
where $A_i$, $B_i$ and $C_j$ are the matrices defined in Section \ref{nota}.  

The group $H_1(N)$ has the following presentation, as a $\Z$-module:
\[H_1(N)=\lr{x_1,\dots,x_g\,|\,2(x_1+\cdots+x_g)=0},\]
where $x_i=[\xi_i]$. Set $k=x_1+\dots+x_g$ and $R=H_1(N)/\lr{k}$.
Observe that $k$ is the unique element of order two in $H_1(N)$ and $R$ is a free $\Z$-module of rank $g-1$. 

The map $P\colon S\to N$ induces $P_\ast\colon H_1(S)\to H_1(N)$, such that, for $1\le i\le r$
\begin{align*}
&P_\ast(a_i)=x_1+\cdots+x_{2i}=-P_\ast(a_{g-i}),\\ 
&P_\ast(b_i)=x_{2i}+x_{2i+1}=P_\ast(b_{g-i}),
\end{align*}
 and if $g=2r+2$ then
\[P_\ast(a_{r+1})=x_1+\cdots+x_g=k,\quad  
P_\ast(b_{r+1})=2x_g.\]
Let $q\colon H_1(S)\to R$ be the composition of $P_\ast$ with the canonical projection $H_1(N)\to R$, and set $K=\ker q$.
It is easy to verify that $K$ has rank $g-1$ and the following elements form its basis:
\begin{align*}
&e_i=a_i+a_{g-i},\quad e_{r+i}=b_i-b_{g-i}\quad\textrm{for\ }1\le i\le r,\\
&e_{2r+1}=a_{r+1}\quad\textrm{\ for\ }g=2r+2.
\end{align*} 
We also set
\begin{align*}
&f_i=b_i,\quad f_{r+i}=a_{g-i}\quad\textrm{for\ }1\le i\le r,\\
&f_{2r+1}=b_{r+1}\quad\textrm{\ for\ }g=2r+2.
\end{align*} 
Observe that the elements $e_i$, $f_i$ for $1\le i\le g-1$ form a symplectic basis of $H_1(S)$. It follows that $H_1(S)/K$ is a free $\Z$-module of rank $g-1$, which is canonically isomorphic to $R$ if $g$ is odd, or to an index-two subgroup of $R$ if $g$ is even. The group $\M(N)$ acts on $H_1(S)$ by   the composition
$\Phi\circ\theta\colon\M(N)\to\Sp(H_1(S))$. Observe that $K$ is $M(N)$-invariant and hence we have two $(g-1)$-dimensional representations 
\[\psi_1\colon\M(N)\to\GL(K),\quad \psi_2\colon\M(N)\to\GL(H_1(S)/K).\]
\begin{lemma}
$\ker\Psi_1=\ker\Psi_2$ and $\theta(\ker\Psi_1)\subset\ker\Phi$.
\end{lemma}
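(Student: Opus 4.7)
The plan is to exploit the deck transformation $j\colon S\to S$ of the covering $P$. Since $j$ is fixed-point-free and $N=S/j$ is nonorientable, $j$ is orientation-reversing, so $j_*$ is an anti-symplectic involution on $H_1(S)$: $j_*^2=I$ and $\langle j_*v,j_*w\rangle=-\langle v,w\rangle$. The heart of the argument is to identify $K\otimes\C$ with the $(-1)$-eigenspace of $j_*$; once this is done, everything reduces to linear algebra.

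First I would compute $j_*$ on the standard basis $a_i,b_i$. The relation $P_*\circ j_*=P_*$ (because $P\circ j=P$), together with the explicit formulas for $P_*$ from the preceding section and the fact that $j$ permutes the curves in each pair $\{\alpha_i,\alpha_{g-i}\}$, $\{\beta_i,\beta_{g-i}\}$, pins down $j_*(a_i)=-a_{g-i}$, $j_*(b_i)=b_{g-i}$ for $1\le i\le r$, and in the even case also $j_*(a_{r+1})=-a_{r+1}$, $j_*(b_{r+1})=b_{r+1}$. A direct check on the generators $e_1,\dots,e_{g-1}$ of $K$ then shows $j_*|_K=-I$, and a dimension count gives $K\otimes\C=\ker(j_*+I)$. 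Set $L=\ker(j_*-I)$, so $H_1(S;\C)=K\oplus L$ with $\dim K=\dim L=g-1$. The anti-symplectic identity gives $\langle v,w\rangle=-\langle v,w\rangle$ for $v,w\in L$, so $L$ is isotropic, and the same argument applied to $K$ shows $K$ is isotropic; being half-dimensional, both are Lagrangian.

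Next, for any $f\in\M(N)$ the lift $\theta(f)$ commutes with $j$: for a representative homeomorphism $\tilde h$ of $\theta(f)$, both $\tilde h\circ j$ and $j\circ\tilde h$ are lifts of $h\circ P$ and are both orientation-reversing, so they coincide. Hence $\Phi\circ\theta(f)$ commutes with $j_*$ and preserves both eigenspaces, giving a block decomposition $\Phi\circ\theta(f)=\Psi_1(f)\oplus T(f)$ with respect to $H_1(S;\C)=K\oplus L$. The composition $L\hookrightarrow H_1(S;\C)\twoheadrightarrow H_1(S;\C)/(K\otimes\C)$ is an $\M(N)$-equivariant isomorphism, so $T(f)$ corresponds to $\Psi_2(f)$ under this isomorphism; in particular $T(f)=I$ if and only if $\Psi_2(f)=I$.

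Finally, because $K$ and $L$ are complementary Lagrangians the symplectic form restricts to a perfect pairing $K\otimes L\to\C$, and combining this with the identity $\langle\Psi_1(f)v,T(f)w\rangle=\langle v,w\rangle$ (symplecticity of $\Phi\circ\theta(f)$) forces $T(f)$ to be the inverse transpose of $\Psi_1(f)$. Therefore $\Psi_1(f)=I$ if and only if $T(f)=I$, proving $\ker\Psi_1=\ker\Psi_2$; and when $f$ lies in this common kernel both blocks are trivial, so $\Phi\circ\theta(f)=I$, i.e.\ $\theta(\ker\Psi_1)\subset\ker\Phi$. The only nontrivial step is the explicit determination of $j_*$, which is a bookkeeping exercise on the formulas for $P_*$; the rest is routine linear algebra on anti-symplectic involutions.
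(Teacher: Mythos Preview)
Your proof is correct and uses the same two ingredients as the paper---the symplecticity of $\Phi\circ\theta(f)$ and its commutation with $j_*$---but you apply them in the opposite order and in a more coordinate-free way. The paper works over $\Z$ in the concrete symplectic basis $(e_1,\dots,e_{g-1},f_1,\dots,f_{g-1})$, where $\Phi(\theta(x))$ is only block \emph{upper-triangular}, $X=\begin{pmatrix}X_1&Y\\0&X_2\end{pmatrix}$; the relation $X^t\Omega X=\Omega$ then gives $X_1^tX_2=I$ (hence $\ker\Psi_1=\ker\Psi_2$), and afterwards the equation $XJ=JX$, with $J=\begin{pmatrix}-I&T\\0&I\end{pmatrix}$ the matrix of $j_*$, is used to force $Y=0$ when $X_1=X_2=I$. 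You instead extend scalars to $\C$, diagonalize $j_*$ first, and obtain a block-\emph{diagonal} form $\Psi_1(f)\oplus T(f)$ directly from $j_*$-equivariance; the Lagrangian duality of the two eigenspaces then identifies $T$ with the contragredient of $\Psi_1$. Your route avoids computing the off-diagonal block $T$ of $J$ and makes the second inclusion $\theta(\ker\Psi_1)\subset\ker\Phi$ immediate, at the small cost of tensoring with $\C$ (or $\Q$) to split off the $(+1)$-eigenspace, which does not exist as a direct summand integrally.
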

\begin{proof} Fix the basis $(e_1,\dots,e_{g-1},f_1,\dots,f_{g-1})$ of $H_1(S)$. For any $x\in\M(N)$ let $X$ be the matrix of $\Phi(\theta(x))$. We have
$X=\begin{pmatrix}X_1&Y\\0&X_2\end{pmatrix}$, where $X_1, X_2, Y$ are $(g-1)\times(g-1)$ matrices.
The matrix of the algebraic intersection form is $\Omega=\begin{pmatrix}0&I_{g-1}\\-I_{g-1}&0\end{pmatrix}$ and since $X$ is symplectic, we have $X^t\Omega X=\Omega$, which gives $X_1^tX_2=I$. Therefore $X_1=I\Leftrightarrow X_2=I$, which proves $\ker\Psi_1=\ker\Psi_2$. To prove
the second part of the lemma, assume $X_1=X_2=I$. 
Let $j_\ast\colon\M(S)\to\M(S)$ be the map induced by the covering involution $j$. It is easy to check that the matrix of $j_\ast$ has the form
$J=\begin{pmatrix}-I_{g-1}&T\\0&I_{g-1}\end{pmatrix}$ for some $T$. We have $XJ=JX$, which implies $Y=0$.
\end{proof}
Note that $\ker\Phi$ is the Torelli group, which is well known to be torsion free, and since $\theta$ is a monomorphism, we immediately obtain the following.
\begin{cor}\label{kerTF}
$\ker\Psi_1$ is torsion free.\hfill{$\Box$}
\end{cor}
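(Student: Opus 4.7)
The plan is very short because essentially all the work has already been done by the preceding lemma together with a classical fact about the Torelli group. Let me sketch it.

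First I would take an arbitrary torsion element $x \in \ker\Psi_1$, so that $x^n = 1$ for some $n \ge 1$, and apply the monomorphism $\theta\colon\M(N_g)\to\M(S_{g-1})$. By the previous lemma, $\theta(x)$ lies in $\ker\Phi$, i.e.\ in the Torelli group of $S_{g-1}$. Since $\theta$ is a homomorphism, $\theta(x)^n = \theta(x^n) = 1$, so $\theta(x)$ is a torsion element of the Torelli group.

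Next I would invoke the classical fact that the Torelli group is torsion free. (This follows, for instance, from Serre's theorem that any nontrivial finite-order element of $\M(S_g)$ acts nontrivially on $H_1(S_g;\Z/3)$, hence a fortiori on $H_1(S_g)$.) Under this granted, $\theta(x) = 1$, and since $\theta$ is injective by \cite{BC}, we conclude $x = 1$. This shows $\ker\Psi_1$ contains no nontrivial torsion, i.e.\ it is torsion free.

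There is really no hard step here: the content of the corollary is already packaged in the inclusion $\theta(\ker\Psi_1)\subset\ker\Phi$ from the preceding lemma, and the only external input is the well-known torsion-freeness of the Torelli group. One could add one remark for cleanliness: because $\ker\Psi_1=\ker\Psi_2$, the same argument shows that $\ker\Psi_2$ is torsion free, though this is not explicitly asserted in the statement.
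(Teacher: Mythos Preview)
Your proof is correct and is exactly the argument the paper gives: use the inclusion $\theta(\ker\Psi_1)\subset\ker\Phi$ from the preceding lemma, the torsion-freeness of the Torelli group $\ker\Phi$, and the injectivity of $\theta$.
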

\begin{rem}
Let $H$ denote the subgroup of $\M(N)$ consisting of the elements inducing the identity on $H_1(N)$. It was proved in \cite{Gas} that $\theta(H)\subset\ker\Phi$.
We leave it as an exercise to check that if $g$ is odd, then $H=\ker\Psi_2$, whereas if $g$ is even, then $H$ is an index-two subgroup of $\ker\Psi_2$. In the latter case, if $g=2r+2$, then we have $\ker\Psi_2=H\cup t_{\varepsilon_{r+1}}H$. 
\end{rem}
\begin{rem}
There is a nontrivial action of $\pi_1(N)$ on $\Z$ defined as follows: $\gamma\in\pi_1(N)$ acts by multiplication by $1$ or $-1$ according to whether $\gamma$ preserves or reverses local orientations of $N$. This action gives rise to homology groups with local coefficients
$H_\ast(N,\widetilde{\Z})$, where $\widetilde{\Z}$ is $\Z$ with the nontrivial 
$\Z[\pi_1(N)]$-module structure.
By \cite[Example 3H.3]{Hat}, we have the  exact sequence 
\[H_2(N)\to H_1(N,\widetilde{\Z})\to H_1(S)\stackrel{P_\ast}{\longrightarrow}H_1(N),\]
which is a part of a long exact sequence of homology groups.
Since $H_2(N)=0$, we have a $\M(N)$-equivariant isomorphism  $H_1(N,\widetilde{\Z})\cong\ker P_\ast$. If $g$ is odd, then $\ker P_\ast=K$, whereas if $g$ is even, then $\ker P_\ast$ is an index-two subgroup of $K$. Therefore the representations $\Psi_1$ and $\Psi_2$ may be seen as coming from the actions of $\M(N)$ on $H_1(N,\widetilde{\Z})$ and $H_1(N)$ respectively.
\end{rem}
For $K$ we fix the basis 
\begin{align*}
&(e_1,e_{r+1},\dots,e_r,e_{2r})\quad\textrm{if\ }g=2r+1,\\
&(e_1,e_{r+1},\dots,e_r,e_{2r},e_{2r+1})\quad\textrm{if\ }g=2r+2.
\end{align*}
For $H_1(S)/K$ we fix the basis
\begin{align*}
&(a_1+K,b_1+K,\dots,a_r+K,b_r+K)\quad\textrm{if\ }g=2r+1,\\
&(a_1+K,b_1+K,\dots,a_r+K,b_r+K,b_{r+1}+K)\quad\textrm{if\ }g=2r+2.
\end{align*}
Having fixed bases for $K$ and $H_1(S)/K$ we can now compute, for $\Psi_1$ and $\Psi_2$, the images of the generators of $\M(N)$. This is done by a straightforward calculation, using Proposition \ref{lifts} and the formula (\ref{transv}).
For $k=1,2$ and  $1\le i\le r$, $1\le j\le r-1$ we have
\[\Psi_k(t_{\varepsilon_i})=A_i,\quad  \Psi_k(t_{\delta_{2i}})=B_i,\quad  
\Psi_k(t_{\delta_{2j+1}})=C_j.\] 
 If $g=2r+1$ then
\[\Psi_1(u_{g-1})=\begin{pmatrix}I_{g-3}&0&0\\0&1&0\\0&1&-1\end{pmatrix},\quad
\Psi_2(u_{g-1})=\begin{pmatrix}I_{g-3}&0&0\\0&-1&0\\0&-1&1\end{pmatrix}.
\]
If $g=2r+2$ then 
\[\Psi_1(t_{\delta_{g-1}})=\begin{pmatrix}I_{g-4}&0&0&0\\0&1&1&0\\0&0&1&0\\0&0&-2&1\end{pmatrix},\quad
\Psi_2(t_{\delta_{g-1}})=\begin{pmatrix}I_{g-4}&0&0&0\\0&1&1&-2\\0&0&1&0\\0&0&0&1\end{pmatrix},
\]
\[\Psi_1(u_{g-1})=\begin{pmatrix}I_{g-4}&0&0&0\\0&1&-1&1\\0&0&1&0\\0&0&2&-1\end{pmatrix},\quad
\Psi_2(u_{g-1})=\begin{pmatrix}I_{g-4}&0&0&0\\0&1&1&-2\\0&0&1&0\\0&0&1&-1\end{pmatrix}.
\]
Now it is easy to see that $\Psi_1$ and $\Psi_2$ are not conjugate as homomorphism to $\GL(g-1,\C)$. For suppose that there is $M\in\GL(g-1,\C)$, such that $\Psi_1(x)=M\Psi_2(x)M^{-1}$ for all $x\in\M(N)$. Then $M$ commutes with
$A_i$, $B_i$, $C_j$ for $1\le i\le r$, $1\le j\le r-1$, and by Lemma \ref{diag},
$M=\alpha I_{2r}$ if $g=2r+1$, or $M=\diag(\alpha I_{2r},\beta)$ if $g=2r+2$,
for $\alpha,\beta\in\C$.
In either case it is impossible that $\Psi_1(u_{g-1})=M\Psi_2(u_{g-1})M^{-1}$.
\section{Homomorphisms from $\M(N_{g,n})$ to $\GL(m,\C)$ for $m<g-1$}
The aim of this section is to prove Theorem \ref{MNtoGLfact}. The proof is divided in two parts. 

\begin{proof}[Proof of Theorem \ref{MNtoGLfact} for $(g,m)\ne(6,4)$.]
Suppose that $n\in\{0,1\}$,  $g=2r+s$ for $r\ge 2$, $s\in\{1,2\}$, $m\le g-2$ and $f\colon\M(N_{g,n})\to\GL(m,\C)$ is a homomorphism. By Theorem \ref{abNg}, it suffices to prove that $\im(f)$ is abelian.
Let $S'=S_{r,s}$ and 
$\iota\colon\M(S')\to\M(N_{g,n})$ be the homomorphism from Corollary \ref{HomSN}.
Set $f'=f\circ\iota$ and observe that if $\im(f')$ is abelian, then so is $\im(f)$, by Lemma \ref{com_norm}.

Suppose that $m\le 2r-1$. Then $\im(f')$ is either trivial or cyclic by Theorem \ref{FHK}  and we are done. This finishes the proof for odd $g$.

Suppose that $g=2r+2$ for $r\ge 3$ and $m=2r$. By Theorem \ref{KorU}, $f'$ is either trivial or conjugate to the homological representation $\Phi$. In the former case we are done.  
In the latter case, by the definition of $\Phi$ we have $\Phi(t_{\gamma_r})=\Phi(t_{\alpha_r})$ because the curves $\gamma_r$ and $\alpha_r$ become isotopic after gluing discs to the boundary of $S'$. It follows that
$f(t_{\delta_{2r+1}})=f(t_{\varepsilon_r})$ and by Lemma \ref{tsq} $f(t_{\delta_1}^2)=1$. This is a contradiction because $\Phi(t_{\alpha_1})$ has infinite order.
\end{proof}
In order to prove Theorem \ref{MNtoGLfact} for $(g,m)=(6,4)$, we first prove some lemmas.
\begin{lemma}\label{MN4toGL2}
Suppose that $f\colon\M(N_{4,n})\to\GL(2,\C)$ is a homomorphism.
Then, with respect to some basis one of the following cases holds.
\begin{itemize}
\item[(1)] $f(t_{\delta_1})=f(t_{\delta_2})=f(t_{\delta_3})=\lambda I$, $\lambda\in\{-1,1\}$
\item[(2)] $f(t_{\delta_1})=f(t_{\delta_2})=f(t_{\delta_3})=\begin{pmatrix}1&0\\0&-1\end{pmatrix}$ 
\item[(3)] $f(t_{\delta_1})=f(t_{\delta_3})=\begin{pmatrix}1&1\\0&-1\end{pmatrix}$, $f(t_{\delta_2})=\begin{pmatrix}-1&0\\1&1\end{pmatrix}$. 
\end{itemize}
In particular $f(t_{\delta_1}^2)=1$.
\end{lemma}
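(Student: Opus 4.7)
My plan is to denote $A=f(t_{\delta_1})$, $B=f(t_{\delta_2})$, $C=f(t_{\delta_3})$ and $v=f(u_3)$, and to extract from the relations the following facts: (i) $A,B,C$ are pairwise conjugate, hence share a characteristic polynomial (braid relation (R4)), and $AC=CA$ by (R1); (ii) $ABA=BAB$ and $BCB=CBC$ by (R4); (iii) $Av=vA$ by (R6) and $CvC=v$ by (R12); (iv) $\det(A)^2=1$, because $\det\circ f$ factors through the abelianisation and $[t_{\delta_1}]$ has order two in $\M(N_{4,n})^{\ab}$ by Theorem~\ref{abNg}. The whole argument is then a classification of the triple $(A,B,C)$ up to simultaneous conjugation, split by the Jordan type of $A$.

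If $A$ is a scalar $\lambda I$, the braid relation immediately forces $B=A$, and then the same argument applied to $(B,C)$ forces $C=A$. The relation $CvC=v$ becomes $\lambda^{2}v=v$, giving $\lambda^{2}=1$; this is case~(1). If $A$ is a non-trivial Jordan block, conjugate so that $A=\left(\begin{smallmatrix}\lambda&1\\0&\lambda\end{smallmatrix}\right)$; then $v$ and $C$ lie in the centraliser of $A$, so each is upper-triangular with equal diagonal entries. Reading $CvC=v$ on the $(1,1)$ entry yields $\lambda^{2}=1$, and on the $(1,2)$ entry it yields $\beta p=0$, where $\beta$ is the superdiagonal of $C$; since $p\ne 0$ and $\beta\ne 0$ (because $C$ is itself a non-trivial Jordan block), this case does not occur.

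The substantive case is $A=\diag(\lambda,\mu)$ with $\lambda\ne\mu$. Because $C$ commutes with $A$ and has the same eigenvalues, $C=A$ or $C=\diag(\mu,\lambda)$. In either possibility, $v$ is diagonal, and $CvC=v$ forces $\lambda^{2}=\mu^{2}=1$, so $\{\lambda,\mu\}=\{1,-1\}$. If $C=\diag(\mu,\lambda)$, writing $B=\left(\begin{smallmatrix}a&b\\c&d\end{smallmatrix}\right)$ and comparing the two braid relations one obtains $\lambda a+\mu d=\lambda\mu=\mu a+\lambda d$, which combined with $a+d=\lambda+\mu$ collapses to $\lambda^{2}+\mu^{2}=0$, contradicting $\{\lambda,\mu\}=\{1,-1\}$. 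If $C=A=\diag(1,-1)$, the braid relation $ABA=BAB$ admits exactly two possibilities: either $B$ is diagonal, in which case matching trace and determinant forces $B=A$ and we get case~(2); or $B=\left(\begin{smallmatrix}-1/2&b\\c&1/2\end{smallmatrix}\right)$ with $bc=3/4$, a one-parameter family that is unique up to conjugation by a diagonal matrix centralising $A$. A further explicit change of basis sending $\diag(1,-1)$ to $\left(\begin{smallmatrix}1&1\\0&-1\end{smallmatrix}\right)$ simultaneously brings $B$ to $\left(\begin{smallmatrix}-1&0\\1&1\end{smallmatrix}\right)$, exhibiting the triple as case~(3).

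The final assertion $f(t_{\delta_1}^{2})=1$ is then a case check against the three explicit forms. The step I expect to be the main obstacle is the subcase $C=A$ with $B$ non-diagonal: one has to solve the braid relation $ABA=BAB$ explicitly for a general $B$ (rather than rely on the commuting pair $A,C$), verify that the resulting one-parameter family of solutions really does collapse under the centraliser of $A$, and then spot the conjugation that produces the specific normal form listed in the lemma. The remaining cases reduce to short calculations once the centraliser of $A$ has been identified.
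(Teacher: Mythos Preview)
Your argument is correct and follows the same overall structure as the paper---a case split on the Jordan type of $A=f(t_{\delta_1})$---though the internal arguments diverge in places. In the non-scalar single-eigenvalue case you exploit $CvC=v$ directly (using that $C$ lies in the centraliser of $A$) to force the superdiagonal of $C$ to vanish, which is tidier than the paper's route through $L_2$ and Lemma~\ref{tsq}. Conversely, for the two-eigenvalue case with $C=\diag(\mu,\lambda)=-A$, the paper disposes of this more quickly by noting that $L_3=-L_1$ turns the two braid relations into $L_1L_2L_1=L_2L_1L_2=-L_2L_1L_2$, whence $L_2L_1L_2=0$. Your trace identity $\lambda a+\mu d=\lambda\mu=\mu a+\lambda d$ comes from the off-diagonal entries of the braid relations and hence tacitly assumes $b\ne 0$ or $c\ne 0$; the residual case $b=c=0$ (so $B$ diagonal) needs one extra line, e.g.\ $B=A$ then gives $CAC=ACA$ with $C=-A$, impossible. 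In the remaining subcase $C=A=\diag(1,-1)$ with $B$ non-diagonal, your direct solution of $ABA=BAB$ giving $a=-\tfrac12$, $bc=\tfrac34$ is correct, as is the reduction to the stated normal form by first normalising $b,c$ via the diagonal centraliser of $A$ and then conjugating; the paper instead reaches case~(3) by comparing the eigenspaces $E(L_1,\pm 1)$ with $E(L_2,\pm 1)$.
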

\begin{proof}
For $i=1,2,3$ let $L_i=f(t_{\delta_i})$ and $U=f(u_{3})$. 

Suppose that $L_1$ has only one eigenvalue $\lambda$. Since $L_1$ is conjugate to $L_1^{-1}$ (by (R12)), we have $\lambda\in\{-1,1\}$. If $\dim E(L_1,\lambda)=2$, then we have the case (1). Suppose that $\dim E(L_1,\lambda)=1$. If $E(L_1,\lambda)\ne E(L_2,\lambda)$, then with respect to some basis we have
$L_1=\begin{pmatrix}\lambda&1\\0&\lambda\end{pmatrix}$, $L_2=\begin{pmatrix}\lambda&0\\x&\lambda\end{pmatrix}$ for some $x$, and from 
the braid relation $L_1L_2L_1=L_2L_1L_2$ we have $x=-1$. Since $L_3$ commutes with $L_1$ we have
$L_3=\begin{pmatrix}\lambda&y\\0&\lambda\end{pmatrix}$ for some $y$, and from
$L_2L_3L_2=L_3L_2L_3$ we obtain $y=1$, hence $L_1=L_3$. Since $\delta_1=\varepsilon_1$, we have $L_1^2=I$ by Lemma \ref{tsq} (for $i=1$, $j=3$), which is a contradiction. If $E(L_1,\lambda)=E(L_2,\lambda)$, then with respect to some basis we have
$L_1=\begin{pmatrix}\lambda&1\\0&\lambda\end{pmatrix}$, 
$L_2=\begin{pmatrix}\lambda&x\\0&\lambda\end{pmatrix}$, and it is easy to obtain a contradiction as above, by showing that $L_1=L_2=L_3$.

Suppose that $L_1$ has two eigenvalues $\lambda, \mu$. Then with respect to some basis we have $L_1=\begin{pmatrix}\lambda&0\\0&\mu\end{pmatrix}$, and since $L_3$ and $U$ commute with $L_1$, they are also diagonal. In particular we have $UL_3=L_3U$ and $L_3UL_3=U$ (R12) gives $L_3^2=1$, which implies $\{\lambda, \mu\}=\{-1,1\}$. We have $L_3=L_1$ or $L_3=-L_1$. In the latter case the braid relations $L_3L_2L_3=L_2L_3L_2$ and $L_1L_2L_1=L_2L_1L_2$ imply $L_2L_1L_2=0$, a contradiction, hence $L_1=L_3$.

If $E(L_1,1)\ne E(L_2,1)$, then with respect to some basis we have 
$L_1=\begin{pmatrix}1&1\\0&-1\end{pmatrix}$, 
$L_2=\begin{pmatrix}-1&0\\x&1\end{pmatrix}$. From $L_1L_2L_1=L_2L_1L_2$ we have $x=1$ and we are in the case (3). Analogously, if $E(L_1,-1)\ne E(L_2,-1)$, then with respect to some basis we have 
$L_1=\begin{pmatrix}-1&1\\0&1\end{pmatrix}$, 
$L_2=\begin{pmatrix}1&0\\1&-1\end{pmatrix}$, and since $E(L_1,1)\ne E(L_1,1)$, we are in the case (3) again.

Finally, if $E(L_1,1)=E(L_2,1)$ and $E(L_1,-1)=E(L_2,-1)$, then with respect to some basis we have 
$L_1=L_2=\begin{pmatrix}1&0\\0&-1\end{pmatrix}$ and we are in the case (2).
\end{proof}
\begin{lemma}\label{MN6toGL4a^2}
Suppose that $n\le 1$ and $f\colon\M(N_{6,n})\to\GL(4,\C)$ is a homomorphism such that $f(t_{\delta_1}^2)=1$. Then $\im(f)$ is abelian.
\end{lemma}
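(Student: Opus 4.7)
The plan is to reduce the statement to a representation-theoretic fact about $\mathfrak{S}_6$, after which Lemma \ref{com_norm} will finish the job. First I would set $L_i := f(t_{\delta_i})$ for $1\le i\le 5$. Because the braid relation (R4) shows that all $t_{\delta_i}$ are pairwise conjugate in $\M(N_{6,n})$, the hypothesis $f(t_{\delta_1}^2)=1$ propagates to $L_i^2 = I$ for every $i\in\{1,\dots,5\}$.

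Next I would observe that the defining Coxeter relations of the symmetric group $\mathfrak{S}_6$ on its adjacent transpositions $\sigma_1,\dots,\sigma_5$ — namely $\sigma_i^2=1$, the braid relation $\sigma_i\sigma_{i+1}\sigma_i=\sigma_{i+1}\sigma_i\sigma_{i+1}$, and the commutation $\sigma_i\sigma_j=\sigma_j\sigma_i$ for $|i-j|>1$ — are all satisfied by $L_1,\dots,L_5$, in view of the previous step together with (R1) and (R4). Hence the assignment $\sigma_i\mapsto L_i$ yields a well-defined homomorphism $\rho\colon\mathfrak{S}_6\to\GL(4,\C)$.

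Now I would invoke Lemma \ref{repsym} with $k=6$: every irreducible complex representation of $\mathfrak{S}_6$ has dimension either $1$ or at least $k-1=5$. Since $\dim\rho=4<5$, decomposing $\rho$ into irreducible constituents forces each summand to be $1$-dimensional, and therefore $\im\rho$ is abelian. In particular $L_1L_2=L_2L_1$.

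Finally, since the curves $\delta_1$ and $\delta_2$ are two-sided and intersect transversally in exactly one point, and since $g=6\ge 5$, Lemma \ref{com_norm} applied to $\alpha=\delta_1$, $\beta=\delta_2$ yields immediately that $\im(f)$ is abelian. I do not foresee a genuine obstacle: the whole argument is essentially forced once one notices that the pair $(g,m)=(6,4)$ lands in precisely the dimension range where $\mathfrak{S}_g$ admits no nontrivial irreducible complex representation, so that Lemma \ref{repsym} has maximal bite and nothing further is required.
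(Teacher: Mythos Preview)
Your proposal is correct and follows essentially the same approach as the paper: factor the images $L_i=f(t_{\delta_i})$ through a representation of $\mathfrak{S}_6$ (using (R1), (R4) and $L_i^2=I$), invoke Lemma~\ref{repsym} to force all irreducible constituents to be one-dimensional, and conclude via Lemma~\ref{com_norm}. The only cosmetic difference is that the paper phrases this by first passing to the quotient $G=\M(N_{6,n})/H$ with $H$ the normal closure of $t_{\delta_1}^2$ and then defining $\rho\colon\mathfrak{S}_6\to G$, whereas you go directly to $\GL(4,\C)$; the content is identical.
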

\begin{proof}
Let $H$ be the normal closure of $t_{\delta_1}^2$ in $\M(N_{6,n})$ and set 
$G=\M(N_{6,n})/H$. We have an induced homomorphism $f'\colon G\to\GL(4,\C)$ such that
$f=f'\circ\pi$, where $\pi\colon\M(N_{6,n})\to G$ is the canonical projection. By the relations (R1, R4), the mapping $\rho(\sigma_i)=\pi(t_{\delta_i})$,
where $\sigma_i$ is the transposition $(i,i+1)$ for $1\le i\le 5$, defines a 
homomorphism $\rho\colon\mathfrak{S}_6\to G$. Let 
$\phi\colon\mathfrak{S}_6\to\GL(4,\C)$ be the composition $f'\circ \rho$. 
 By Lemma \ref{repsym}, $\phi$ is the direct sum of one-dimensional representations. In particular the image of $\phi$ is abelian,
and so is $\im(f)$ by Lemma \ref{com_norm}.
\end{proof}

Let $R$ be the subsurface obtained by removing from $N_{6,n}$ a regular neighbourhood of $\delta_1\cup\delta_2$. Note that $R$ is homeomorphic to $N_{4,n+1}$. The homomorphism $\M(R)\to\M(N_{6,n})$ induced by the inclusion of $R$ in $N_{6,n}$ is injective, and we will treat $\M(R)$ as a subgroup of $\M(N_{6,n})$.

\begin{lemma}\label{MN6toGL4splitting}
Suppose that $n\le 1$, $f\colon\M(N_{6,n})\to\GL(4,\C)$ is a homomorphism and there exists a splitting $\mathbb{C}^4=V_1\oplus V_2$ such that $V_i$ is a $2$-dimensional $\M(R)$-invariant subspace for $i=1,2$. Then $\im(f)$ is  abelian.\end{lemma}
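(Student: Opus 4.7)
The plan is to combine Lemmas \ref{MN4toGL2} and \ref{MN6toGL4a^2} via the given splitting. Because $V_1$ and $V_2$ are $\M(R)$-invariant, the restriction of $f$ to $\M(R)\cong\M(N_{4,n+1})$ decomposes as a direct sum of two two-dimensional representations $f_i\colon\M(R)\to\GL(V_i)$ for $i=1,2$. The proof of Lemma \ref{MN4toGL2} uses only the relations (R4), (R12) together with Lemma \ref{tsq}, all of which are local statements about three consecutive $\delta$-curves and a crosscap transposition, and so remain valid in $\M(N_{4,n+1})$. Hence the same conclusion applies to each $f_i$, and in particular $f_i(t_{\delta^R}^2)=1$ for any $\delta$-type curve $\delta^R$ of $R$.

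After the natural identification of $R$ with $N_{4,n+1}$, the curves $\delta_3,\delta_4,\delta_5\subset N_{6,n}$ lie in $R$ (being disjoint from the regular neighbourhood of $\delta_1\cup\delta_2$ that was removed) and form a chain of two-sided curves corresponding to the standard $\delta$-curves of $R$. Hence $f_i(t_{\delta_3}^2)=1$ for both $i$, and consequently $f(t_{\delta_3}^2)=I$. By the braid relation (R4) the twist $t_{\delta_3}$ is conjugate to $t_{\delta_1}$ in $\M(N_{6,n})$, so $f(t_{\delta_1}^2)=I$ as well, and Lemma \ref{MN6toGL4a^2} then immediately yields that $\im(f)$ is abelian.

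The argument is essentially a formal reduction and I do not expect any serious obstacle. The only point requiring slight care is checking that Lemma \ref{MN4toGL2} transfers from $\M(N_{4,n})$ to $\M(N_{4,n+1})$ with $n+1\le 2$, but since its proof appeals exclusively to relations and to Lemma \ref{tsq}, which are insensitive to the number of boundary components, this verification is routine.
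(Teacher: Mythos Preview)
Your approach mirrors the paper's almost exactly: restrict $f$ to $\M(R)$, split it as $f_1\oplus f_2$, apply Lemma \ref{MN4toGL2} to each summand to see that the square of a Dehn twist dies, and finish with Lemma \ref{MN6toGL4a^2}.

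There is, however, a slip in the middle. The curve $\delta_3=\xi_{\{3,4\}}$ meets $\delta_2=\xi_{\{2,3\}}$ in one point, so $\delta_3$ is \emph{not} disjoint from the removed neighbourhood of $\delta_1\cup\delta_2$; consequently $t_{\delta_3}\notin\M(R)$ and you cannot conclude $f_i(t_{\delta_3}^2)=1$ from the splitting. The paper uses $\delta_4$ instead (and $\delta_5$ would work equally well): both are disjoint from $\delta_1$ and $\delta_2$, hence lie in $R$, and Lemma \ref{MN4toGL2} applied to each $f_i$ yields $f_i(t_{\delta_4}^2)=1$, so $f(t_{\delta_4}^2)=1$. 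Since $t_{\delta_4}$ is conjugate to $t_{\delta_1}$ in $\M(N_{6,n})$, Lemma \ref{MN6toGL4a^2} finishes the proof exactly as you indicate.

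One minor remark: Lemma \ref{MN4toGL2} is stated for $\M(N_{4,n})$ with no restriction on $n$, so no separate ``transfer'' argument to $\M(N_{4,n+1})$ is required.
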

\begin{proof}
Let $f'$ be the restriction of $f$ to $\M(R)$. With respect to the splitting $\mathbb{C}^4=V_1\oplus V_2$ we have $f'=f_1\oplus f_2$ for some
$f_i\colon\M(R)\to\GL(2,\C)$, $i=1,2$. By  Lemma \ref{MN4toGL2} we have $f_i(t_{\delta_4}^2)=1$ for $i=1,2$, hence $f(t_{\delta_4}^2)=1$ and we are done by Lemma \ref{MN6toGL4a^2}.
\end{proof}

\begin{lemma}\label{MN6toGL4inv}
Suppose that $n\le 1$, $f\colon\M(N_{6,n})\to\GL(4,\C)$ is a homomorphism, $f(t_{\delta_1})$ has only one eigenvalue and there exists a $2$-dimensional $\M(R)$-invariant subspace. Then $\im f$ is abelian.
\end{lemma}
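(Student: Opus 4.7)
The plan is to show that either $f(t_{\delta_1}^2)=I$, in which case Lemma \ref{MN6toGL4a^2} applies, or $V$ admits an $\M(R)$-invariant complement, in which case Lemma \ref{MN6toGL4splitting} applies. By relation (R12), $t_{\delta_1}$ is conjugate to $t_{\delta_1}^{-1}$, so the single eigenvalue $\lambda$ of $f(t_{\delta_1})$ satisfies $\lambda=\lambda^{-1}$, hence $\lambda\in\{-1,1\}$. Since $\delta_1$ is disjoint from $R$, the matrix $f(t_{\delta_1})$ commutes with every $f(g)$, $g\in\M(R)$. If $f(t_{\delta_1})=\lambda I$ then $f(t_{\delta_1}^2)=I$ and we are done, so assume $f(t_{\delta_1})\neq\lambda I$.

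Choose a basis of $\C^4$ whose first two vectors span $V$. In this basis
$$f(g)=\begin{pmatrix}A(g)&B(g)\\0&C(g)\end{pmatrix}\ (g\in\M(R)),\qquad f(t_{\delta_1})=\begin{pmatrix}P&Q\\R&S\end{pmatrix},$$
where $A,C\colon\M(R)\to\GL(2,\C)$ are homomorphisms. Since $\M(R)\cong\M(N_{4,n+1})$ and $t_{\delta_4}$ lies in $\M(R)$ and is conjugate to $t_{\delta_1}$ in $\M(N_{6,n})$ (as all $t_{\delta_i}$ are conjugate by the braid relations (R4)), the matrix $f(t_{\delta_4})$ has only the eigenvalue $\lambda$; consequently so do its diagonal blocks $A(t_{\delta_4})$ and $C(t_{\delta_4})$. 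Among the three cases of Lemma \ref{MN4toGL2} only case (1) admits a single-eigenvalue matrix, so $A(t_{\delta_4})=C(t_{\delta_4})=\lambda I$. Setting $B_4:=B(t_{\delta_4})$ and expanding the commutation $f(t_{\delta_1})f(t_{\delta_4})=f(t_{\delta_4})f(t_{\delta_1})$ in block form yields
$$B_4R=0,\qquad RB_4=0,\qquad PB_4=B_4S.$$

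If $B_4=0$, then $f(t_{\delta_4})=\lambda I$, so $f(t_{\delta_4}^2)=I$, and by conjugacy $f(t_{\delta_1}^2)=I$; Lemma \ref{MN6toGL4a^2} concludes. Assume from now on that $B_4\neq 0$, so $R$ cannot be invertible. When $R=0$, the subspace $V$ is $f(t_{\delta_1})$-invariant; one then combines the relation $PB_4=B_4S$ with the action of the remaining generators of $\M(R)$ (crosscap transpositions $u_i$ and twists $t_{\varepsilon_j}$ supported in $R$) and relations (R6)--(R12) to produce a 2-dimensional $\M(R)$-invariant complement to $V$, whereupon Lemma \ref{MN6toGL4splitting} finishes the proof. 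The intermediate case in which $R$ has rank $1$ is handled analogously, using that the intertwining $R\cdot A(g)=C(g)\cdot R$ makes $\ker R\subset V$ and $\mathrm{Im}\,R\subset\C^4/V$ into $\M(R)$-invariant lines, refining the decomposition.

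The main technical obstacle is the case $R=0$ with $B_4\neq 0$, where the required $\M(R)$-invariant complement is not apparent and its construction amounts to showing that the cocycle $B\colon\M(R)\to\mathrm{Hom}(\C^4/V,V)$ given by the block form is a coboundary. The rigidity imposed on $A$ and $C$ by Lemma \ref{MN4toGL2} (they are scalar on Dehn twists) combined with the braid and crosscap relations is precisely what makes this possible.
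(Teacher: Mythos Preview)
Your argument has a genuine gap: after correctly deducing that $A(t_{\delta_4})=C(t_{\delta_4})=\lambda I$ via Lemma~\ref{MN4toGL2}, you head in an unnecessary direction. You try to force either $f(t_{\delta_1}^2)=I$ (via $B_4=0$) or the existence of an $\M(R)$-invariant complement to $V$, and you yourself label the latter as ``the main technical obstacle'' without actually carrying out the coboundary computation. The case analysis on the off-diagonal block of $f(t_{\delta_1})$ and the vague appeal to relations (R6)--(R12) for the rank~$1$ case are not a proof; nothing you wrote guarantees the extension splits.

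The missing idea is much simpler and bypasses all of this. Apply the same reasoning to $t_{\delta_5}$: since $t_{\delta_5}$ also lies in $\M(R)$ and is conjugate to $t_{\delta_1}$, case~(1) of Lemma~\ref{MN4toGL2} gives
\[
f(t_{\delta_4})=\begin{pmatrix}\lambda I&X\\0&\lambda I\end{pmatrix},\qquad
f(t_{\delta_5})=\begin{pmatrix}\lambda I&Y\\0&\lambda I\end{pmatrix}.
\]
Two such matrices automatically commute (their product is $\begin{pmatrix}\lambda^2 I&\lambda(X+Y)\\0&\lambda^2 I\end{pmatrix}$, symmetric in $X,Y$). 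Since $\delta_4$ and $\delta_5$ intersect transversally in one point, Lemma~\ref{com_norm} finishes the proof immediately. There is no need to analyse $f(t_{\delta_1})$ in block form, no need for an invariant complement, and no cocycle to trivialise.
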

\begin{proof}
Let $\lambda$ be the eigenvalue of $f(t_{\delta_1})$. 
Fix a basis of $\mathbb{C}^4$ whose first two vectors span the $\M(R)$-invariant subspace. By the case (1) of Lemma \ref{MN4toGL2}, with respect to such basis we have
$f(t_{\delta_4})=\begin{pmatrix}\lambda I&X\\0&\lambda I\end{pmatrix}$, 
$f(t_{\delta_5})=\begin{pmatrix}\lambda I&Y\\0&\lambda I\end{pmatrix}$, for some $2\times 2$ matrices $X, Y$. In particular $f(t_{\delta_4})$ and $f(t_{\delta_5})$ commute and we are done by Lemma \ref{com_norm}.
\end{proof}

\begin{proof}[Proof of Theorem \ref{MNtoGLfact} for $g=6$, $m=4$.]
Suppose that $n\in\{0,1\}$ and $f\colon\M(N_{6,n})\to\GL(4,\C)$ is a homomorphism. For $1\le i\le 5$ we set $L_i=f(t_{\delta_i})$ and
$M=f(t_{\varepsilon_2})$, $U_5=f(u_5)$.
We consider the following cases.
\begin{itemize}
\item[(1)] $L_1$ has $4$ eigenvalues. 
\item[(2)] $L_1$ has $3$ eigenvalues. 
\item[(3)] $L_1$ has $2$ eigenvalues with equal multiplicities. 
\item[(4)] $L_1$ has $2$ eigenvalues with different multiplicities. 
\item[(5)] $L_1$ has $1$ eigenvalue. 
\end{itemize}
In the cases (1, 2, 3) it is easy to find a splitting $\mathbb{C}^4=V_1\oplus V_2$ such that $V_i$ is a $2$-dimensional $\M(R)$-invariant subspace for $i=1,2$. For example, suppose that $L_1$ has three eigenvalues $\lambda_1, \lambda_2, \lambda_3$ such that $\#\lambda_1=\#\lambda_2=1$ and $\#\lambda_3=2$. Then we take
$V_1=E(L_1,\lambda_1)\oplus E(L_1,\lambda_2)$ and
$V_2=E(L_1,\lambda_3)$ if $\dim E(L_1,\lambda_3)=2$ or
$V_2=E^2(L_1,\lambda_3)$ if $\dim E(L_1,\lambda_3)=1$. Therefore in the cases (1, 2, 3) we are done by Lemma \ref{MN6toGL4splitting}.

Assume (5). Let $\lambda$ be the unique eigenvalue of
 $L_1$  and  $k=\dim E(L_1,\lambda)$.
If $k=4$ then $L_1=\lambda I$ and the image of $f$ is cyclic. If $k=2$ or $k=1$ then respectively $E(L_1,\lambda)$ or $E^2(L_1,\lambda)$ is a $2$-dimensional $\M(R)$-invariant subspace, and we are done by Lemma \ref{MN6toGL4inv}. Suppose that $k=3$. 
If $E(L_1,\lambda)\ne E(L_2,\lambda)$ then $E(L_1,\lambda)\cap E(L_2,\lambda)$ 
is a $2$-dimensional $\M(R)$-invariant subspace, and we are done by Lemma \ref{MN6toGL4inv}. If $E(L_1,\lambda)=E(L_2,\lambda)$ then 
with respect to some basis we have
\[L_1=\begin{pmatrix}\lambda&0&0&0\\0&\lambda&0&0\\0&0&\lambda&1\\0&0&0&\lambda\end{pmatrix},\quad
L_2=\begin{pmatrix}\lambda&0&0&x\\0&\lambda&0&y\\0&0&\lambda&z\\0&0&0&\lambda\end{pmatrix}.\]
In particular $L_1$ and $L_2$ commute and we are done by Lemma \ref{com_norm}.

It remains to consider the case (4). Suppose that $L_1$ has eigenvalues $\mu$, $\lambda$, with $\#\mu=1$ and $\#\lambda=3$. Since $L_1$ is conjugate to $L_1^{-1}$, we have $\{\mu,\lambda\}=\{-1,1\}$. It follows from Theorem \ref{abNg} that there is a homomorphism $\tau(\M(N_6))\to\{-1,1\}$ such that $\tau(a_1)=-1$. By multiplying $f$ by $\tau$ if necessary, we may assume  $\mu=-1$, $\lambda=1$. The Jordan form of $L_1$ is one of the following three matrices.
\[(i)\ \begin{pmatrix}-1&0&0&0\\0&1&0&0\\0&0&1&0\\0&0&0&1\end{pmatrix}\quad 
(ii)\ \begin{pmatrix}-1&0&0&0\\0&1&1&0\\0&0&1&1\\0&0&0&1\end{pmatrix}\quad 
(iii)\ \begin{pmatrix}-1&0&0&0\\0&1&0&0\\0&0&1&1\\0&0&0&1\end{pmatrix}
\]

In the case (i) we have $L_1^2=I$ and we are done by Lemma \ref{MN6toGL4a^2}. 

In the case (ii) the following subspaces are $\M(R)$-invariant:
$E(L_1,-1)$, $E(L_1,1)$, $E^2(L_1,1)$, $E^3(L_1,1)$. It follows that 
\[M=\begin{pmatrix}x_1&0&0&0\\0&x_2&v_1&v_2\\0&0&x_3&v_3\\0&0&0&x_4\end{pmatrix},\quad
L_4=\begin{pmatrix}y_1&0&0&0\\0&y_2&w_1&w_2\\0&0&y_3&w_3\\0&0&0&y_4\end{pmatrix}.\]
The braid relation $ML_4M=L_4ML_4$ (R5) implies $x_i=y_i$ for $1\le i\le 4$. Since the first two vectors of the basis are eigenvectors of $M$, they have to correspond to different eigenvalues of $M$. Therefore $x_2=-x_1$, $x_3=x_4=1$ and $x_1=1$ or $x_1=-1$. In either case it is not difficult to check that   $ML_4M=L_4ML_4$ holds if and only if $M=L_4$. We are done by Lemma \ref{com_norm}.

In the case (iii) the following subspaces are $\M(R)$-invariant:
$E(L_1,-1)$, $E(L_1,1)$, $E^2(L_1,1)$. We have $\dim E(L_1,1)=2$ and by  applying Lemma \ref{MN4toGL2} to the action of $\M(R)$ on this subspace, we obtain three sub-cases.

Sub-case (iiia). If the action of $\M(R)$ on $E(L_1,1)$ is trivial, then we have
\[M=\begin{pmatrix}-1&0&0&0\\0&1&0&x_1\\0&0&1&x_2\\0&0&0&1\end{pmatrix},\quad
L_4=\begin{pmatrix}-1&0&0&0\\0&1&0&y_1\\0&0&1&y_2\\0&0&0&1\end{pmatrix}\]
As in the case (ii), the braid relation implies $M=L_4$ and we are done by Lemma \ref{com_norm}.

Sub-case (iiib). By changing the basis of $E(L_1,1)$ we may assume that
\[M=\begin{pmatrix}1&0&0&0\\0&-1&0&x_1\\0&0&1&x_2\\0&0&0&1\end{pmatrix},\quad
L_4=\begin{pmatrix}1&0&0&0\\0&-1&0&y_1\\0&0&1&y_2\\0&0&0&1\end{pmatrix}\]
As in the case (ii), the braid relation implies $M=L_4$ and we are done by Lemma \ref{com_norm}.

Sub-case (iiic). By changing the basis of $E(L_1,1)$ we may assume that
\[M=\begin{pmatrix}1&0&0&0\\0&1&1&x_1\\0&0&-1&x_2\\0&0&0&1\end{pmatrix},
L_4=\begin{pmatrix}1&0&0&0\\0&-1&0&y_1\\0&1&1&y_2\\0&0&0&1\end{pmatrix},
L_5=\begin{pmatrix}1&0&0&0\\0&1&1&z_1\\0&0&-1&z_2\\0&0&0&1\end{pmatrix}.\]
By solving the equations $ML_4M=L_4ML_4$ and $L_5L_4L_5=L_4L_5L_4$ we obtain $x_2=-(2x_1+y_1+2y_2)$, $z_2=-(2z_1+y_1+2y_2)$, and from $ML_5=L_5M$ we obtain $x_2=z_2$. Thus $M=L_5$ and by Lemma \ref{tsq} 
$L_1^2=1$. We are done by Lemma \ref{MN6toGL4a^2}. 
\end{proof}
\section{Homomorphisms between mapping class groups}
The aim of this section is to prove Theorem \ref{MNgtoMNh}.
Fix $g\ge 5$ and set $\M=\M(N_g)$. We are going to use the fact that $s=t_{\delta_1}\cdots t_{\delta_{g-1}}$ has finite order in $\M$ (equal to $g$ if it is even, or $2g$ otherwise, see \cite{PSz}). 
By the relations (R1,R4) we have 
\begin{equation}\label{srel}
t_{\delta_{i+1}}s=st_{\delta_i}\quad\textrm{for\ }1\le i\le g-2.
\end{equation}
By Theorem \ref{abNg} we have $s\in[\M,\M]$ for $g\ge 7$ and $g=5$, $s^2\in[\M,\M]$ for $g=6$.
\begin{proof}[Proof of Theorem \ref{MNgtoMNh}]
Suppose that $g\ge 5$, $h<g$ and $f\colon\M(N_g)\to\M(N_h)$ is a homomorphism.
Since $M(N_h)$ is abelian for $h\le 2$, we are assuming $h\ge 3$.

Let $f'\colon\M(N_{g})\to\GL(h-1,\C)$ be the composition $\Psi_1\circ f$ and
$K=\ker \Psi_1$.
By Theorem \ref{MNtoGLfact}, $\im(f')$ is abelian, hence $f([\M(N_g),\M(N_g)])\subseteq K$. 
Suppose that $g\ge 7$ or $g=5$. Then $f(s)\in K$, and since $K$ is torsion free by Lemma \ref{kerTF}, thus $f(s)=1$. This gives, by (\ref{srel}), $f(t_{\delta_1})=f(t_{\delta_2})$ and we are done by Lemma \ref{com_norm}. If $g=6$ then $f(s^2)\in K$, which gives 
$f(s^2)=1$ and $f(t_{\delta_2})=f(t_{\delta_4})$. Since $t_{\delta_1}$ commutes with $t_{\delta_4}$, thus $f(t_{\delta_1})$ commutes with $f(t_{\delta_2})$  and we are done by Lemma \ref{com_norm}.
\end{proof}

Note that Theorems \ref{MNtoGLfact} and \ref{MNgtoMNh} are trivially true for $g\le 3$ because
$\GL(1,\C)=\C^\ast$, $\M(N_2)\cong\Z_2\times\Z_2$, $M(N_1)=1$ are abelian groups.
On the other hand, Corollary \ref{g4cex} below shows that both theorems are false for $g=4$ (recall that $\M(N_3)\cong\GL(2,\Z)$).
Let $D_\infty$ denote the infinite dihedral group, defined by the presentation
\[D_\infty=\lr{x,y\,|\,x^2=y^2=1}.\]
\begin{lemma}\label{MN4onD}
There is an epimorphism $\phi\colon\M(N_4)\to D_\infty$. 
\end{lemma}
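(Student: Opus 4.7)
The plan is to exhibit an explicit surjection $\phi\colon\M(N_4)\to D_\infty = \lr{a,b\,|\,a^2=b^2=1}$. Two general routes are available: (a) a direct construction by specifying $\phi$ on the generators from Theorem~\ref{gener} and checking that all defining relations of $\M(N_4)$ are preserved; (b) an indirect construction as a composition $\M(N_4)\to G\to D_\infty$ through an intermediate group $G$.

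For approach (a), the immediate difficulty is that the braid relation (R4), pushed into $D_\infty$, reads $\phi(t_{\delta_i})\phi(t_{\delta_{i+1}})\phi(t_{\delta_i})=\phi(t_{\delta_{i+1}})\phi(t_{\delta_i})\phi(t_{\delta_{i+1}})$. A short calculation in $D_\infty$ (writing reflections as $r_n=a(ab)^n$ and checking $r_{2n-m}=r_{2m-n}$) shows that the only solutions have $\phi(t_{\delta_i})=\phi(t_{\delta_{i+1}})$, since the alternative would force the image into the quotient by $(xy)^3=1$, i.e.\ into $\mathfrak{S}_3$, which is finite and so cannot be $D_\infty$. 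An analogous collapse occurs on the crosscap transpositions via (R9). Coupled with (R6) and (R12), this makes any naive generator-by-generator assignment land in an abelian image of size at most $2$; so some indirect structure must be brought in.

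Thus I would follow approach (b): construct a homomorphism $\M(N_4)\to\M(N_3)\cong\GL(2,\Z)$ and compose with a quotient (or embedding picked out by subgroup structure) onto a copy of $D_\infty$ inside $\GL(2,\Z)$. A natural $D_\infty$ in $\GL(2,\Z)$ is generated by two reflections of determinant $-1$ that do not commute, for instance fixing integer points on distinct lines; its image in $\PGL(2,\Z)$ is a standard parabolic/reflective subgroup. For the first map, I would exploit the decomposition $N_4\cong N_2\# N_2$ and the associated $\Z/2$-symmetry swapping the two Klein-bottle summands (together with the compatibility of this symmetry with the chain of curves $\delta_1,\delta_2,\delta_3$); modding out by the isotopy class of this involution or passing to a Birman--Hilden-type quotient should produce a homomorphism to $\M(N_3)$.

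The main obstacle is realizing this map $\M(N_4)\to\M(N_3)$ rigorously, since the Birman--Hilden story in the non-orientable setting is more delicate than for orientable surfaces, and it must be confirmed that the resulting homomorphism is non-trivial and lands inside the chosen $D_\infty\subset\GL(2,\Z)$. Once $\phi$ is defined, surjectivity reduces to exhibiting two elements of $\M(N_4)$ whose images are non-commuting involutions in $D_\infty$---equivalently, showing that the composite does not factor through $[D_\infty,D_\infty]\cong\Z$. A concrete candidate pair would be $\phi(u_1)$ and $\phi(u_3)$, or one of the $u_i$ paired with a suitable conjugate, for which a direct check against the target $\GL(2,\Z)$-matrices settles the claim.
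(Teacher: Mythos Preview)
Your dismissal of the direct approach (a) contains the key oversight. You correctly observe that the braid relations (R4) and (R9) in $D_\infty$ force all $\phi(t_{\delta_i})$ to coincide and all $\phi(u_i)$ to coincide, and that (R6), (R12) then make the common values commute. But you have forgotten the generator $t_{\varepsilon_2}$. For $g=4$ there is no braid relation (R5) tying $t_{\varepsilon_2}$ to any $t_{\delta_j}$ (since $2i=4=g$), and (R3), (R8) only say $t_{\varepsilon_2}$ commutes with the $t_{\delta_j}$; nothing forces $\phi(t_{\varepsilon_2})$ to commute with $\phi(u_i)$. This is precisely the degree of freedom the paper exploits: it sets $\phi(t_{\delta_i})=1$, $\phi(u_i)=y$, and $\phi(t_{\varepsilon_2})=xy$. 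The only nontrivial relation to check is the extra $\M(N_4)$-relation $(t_{\varepsilon_2}u_3)^2=1$, which becomes $(xy\cdot y)^2=x^2=1$. The image then contains $y$ and $xy$, hence all of $D_\infty$.

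Your proposed route (b) through $\M(N_3)\cong\GL(2,\Z)$ is at best a sketch: you do not construct the map $\M(N_4)\to\M(N_3)$, and the Birman--Hilden mechanism you invoke does not obviously produce one (quotienting $N_4$ by the involution swapping two Klein-bottle summands does not yield $N_3$). In fact Corollary~\ref{g4cex} of the paper goes in the opposite direction: it first builds $\phi\colon\M(N_4)\to D_\infty$ by approach (a), and only then embeds $D_\infty$ into $\M(N_3)$. So the approach you abandoned is the one that works, and the missing ingredient was simply to remember $t_{\varepsilon_2}$.
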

\begin{proof}
According to the main result of \cite{Szepg4} simplified in \cite{PSz}, $\M(N_4)$ admits a presentation with generators $t_{\varepsilon_2}$,  $t_{\delta_i}$, $u_i$ for $i=1,2,3$ and relations (R1, R3, R4, R6, R7, R9, R10, R11, R12) and
\begin{align*}
&t_{\delta_{i+1}}u_1u_{i+1}=u_iu_{i+1}t_{\delta_i}\quad\mathrm{for\ }i=1,2\\
&(t_{\varepsilon_2}u_3)^2=1,\quad t_{\delta_1}(t_{\delta_2}t_{\delta_3}u_3u_2)t_{\delta_1}=(t_{\delta_2}t_{\delta_3}u_3u_2).
\end{align*}
It is easy to check that the mapping $\phi(t_{\varepsilon_2})=xy$,  $\phi(t_{\delta_i})=1$, $\phi(u_i)=y$ for $i=1,2,3$, respects the defining relations of $\M(N_4)$, hence it defines a homomorphism onto $D_\infty$.
\end{proof}
\begin{cor}\label{g4cex}
For $h\ge 3$ there is a homomorphism $f\colon\M(N_4)\to\M(N_h)$, such that  $\im(f)$ is isomorphic to $D_\infty$.
\end{cor}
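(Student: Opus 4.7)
The strategy is to compose the epimorphism $\phi\colon\M(N_4)\twoheadrightarrow D_\infty$ of Lemma~\ref{MN4onD} with an injective homomorphism $\iota_h\colon D_\infty\hookrightarrow\M(N_h)$, for each $h\ge 3$; then $f:=\iota_h\circ\phi$ has image isomorphic to $D_\infty$. Since $D_\infty=\lr{x,y\,|\,x^2=y^2=1}$, and since any two involutions whose product has infinite order generate a copy of $D_\infty$, the construction of $\iota_h$ reduces to exhibiting a pair $U,V\in\M(N_h)$ of involutions with $UV$ of infinite order.

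To produce such $U$ and $V$, I first pick a non-separating two-sided simple closed curve $\gamma\subset N_h$ whose complement is the nonorientable surface $N_{h-2,2}$; such a $\gamma$ exists for every $h\ge 3$. Modeling $N_{h-2,2}$ as the annulus $S^1\times[-1,1]$ with $h-2$ crosscaps attached symmetrically along the core circle $S^1\times\{0\}$, the reflection $(\theta,t)\mapsto(\theta,-t)$ is an involution of this model that swaps the two boundary components and commutes with the antipodal identifications defining each crosscap. After regluing with the symmetric gluing map between the two boundaries, this descends to an involution $U\in\M(N_h)$ that preserves $\gamma$ setwise while exchanging its two sides. In particular $U$ reverses the orientation of an annular neighbourhood of $\gamma$, so $Ut_\gamma U^{-1}=t_\gamma^{-1}$. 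Setting $V:=t_\gamma Ut_\gamma^{-1}$, one computes $V^2=t_\gamma U^2 t_\gamma^{-1}=1$ and
\[UV=U(t_\gamma Ut_\gamma^{-1})=(Ut_\gamma U)t_\gamma^{-1}=t_\gamma^{-2},\]
which has infinite order. Hence $\langle U,V\rangle\cong D_\infty$, and the assignment $y\mapsto U$, $x\mapsto V$ defines the required injection $\iota_h$.

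The main obstacle is producing the side-swapping involution $U$ of $N_h$; this is resolved by the symmetric annulus-with-crosscaps model of $N_{h-2,2}$ above, and the compatibility under regluing is automatic because the two boundary components of the cut surface are precisely the two sides of $\gamma$ that $U$ swaps. Everything else is the short algebraic identity displayed, together with the fact that $t_\gamma$ has infinite order in $\M(N_h)$.
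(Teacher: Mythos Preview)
Your proof is correct and follows the same overall strategy as the paper's: compose the epimorphism $\phi$ of Lemma~\ref{MN4onD} with an embedding $D_\infty\hookrightarrow\M(N_h)$ built from two involutions whose product has infinite order. The paper obtains its two involutions by citing \cite{SzepCR}, where it is shown that $t_{\delta_1}$ can be written as a product $\sigma\tau$ of involutions in $\M(N_h)$; you instead construct an explicit involution $U$ from the reflection on a symmetric ``annulus with $h-2$ crosscaps'' model of $N_h$, observe that $Ut_\gamma U^{-1}=t_\gamma^{-1}$, and take $V=t_\gamma U t_\gamma^{-1}$, so that $UV=t_\gamma^{-2}$. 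Your route is thus a self-contained instance of the construction the paper cites, trading brevity for independence from \cite{SzepCR}. One point worth making explicit in your write-up is why the glued-up model really is $N_h$: identifying the two boundary circles of $S^1\times[-1,1]$ by $(\theta,1)\sim(\theta,-1)$ gives a torus, and a torus with $h-2\ge 1$ crosscaps is homeomorphic to $N_h$.
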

\begin{proof}
Fix $h\ge 3$. By the proof of \cite[Theorem 3]{SzepCR}, $t_{\delta_1}$ can be written in $\M(N_h)$ as a product of two involutions $\sigma, \tau$. Since $t_{\delta_1}$ has infinite order in $\M(N_h)$, the mapping $x\mapsto\sigma$, $y\mapsto\tau$ defines an embedding $D_\infty\to\M(N_h)$. By pre-composing this embedding with the epimorphism $\phi$ from Lemma \ref{MN4onD}, we obtain $f$. 
\end{proof}
The following two theorems can be proved by the same method as Theorem \ref{MNgtoMNh}. We leave the details to the reader.
\begin{theorem}
Suppose that $g\ge 5$, $g\ge 2h+2$ and $f\colon\M(N_g)\to\M(S_h)$ is a homomorphism. Then $\im(f)$ is abelian.
\end{theorem}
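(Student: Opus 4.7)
The plan is to mirror the proof of Theorem \ref{MNgtoMNh} almost verbatim, replacing the homological representation $\Psi_1\colon\M(N_h)\to\GL(h-1,\C)$ with the standard symplectic representation $\Phi\colon\M(S_h)\to\Sp(2h,\Z)\hookrightarrow\GL(2h,\C)$. The cases $h\le 1$ are trivial since $\M(S_0)=1$ and, for $h=1$, $\Phi\colon\M(S_1)\to\Sp(2,\Z)$ is an isomorphism making the target isomorphic to $\SL(2,\Z)\subset\GL(2,\C)$; so the real content is $h\ge 2$.

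First I would observe that the hypothesis $g\ge 2h+2$ is precisely the statement $2h\le g-2$, which places the composition $f'=\Phi\circ f\colon\M(N_g)\to\GL(2h,\C)$ in the range of applicability of Theorem \ref{MNtoGLfact}. That theorem immediately gives that $\im(f')$ is abelian, which means
\[f\bigl([\M(N_g),\M(N_g)]\bigr)\subseteq\ker\Phi,\]
the Torelli group of $S_h$. The essential input is then that this Torelli group is torsion-free (classical; trivial for $h=1$ and standard for $h\ge 2$).

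Now I would bring in the element $s=t_{\delta_1}\cdots t_{\delta_{g-1}}\in\M(N_g)$, which by \cite{PSz} has finite order (equal to $g$ or $2g$), and which, by Theorem \ref{abNg}, lies in $[\M(N_g),\M(N_g)]$ for $g=5$ or $g\ge 7$, while only $s^2$ lies in the commutator subgroup for $g=6$. In the first case $f(s)$ is a finite-order element of the torsion-free group $\ker\Phi$, hence trivial; the identity $t_{\delta_{i+1}}=st_{\delta_i}s^{-1}$ from (\ref{srel}) then gives $f(t_{\delta_1})=f(t_{\delta_2})$, and Lemma \ref{com_norm} (applied to the transversally intersecting two-sided curves $\delta_1,\delta_2$) concludes that $\im(f)$ is abelian. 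For $g=6$ the same argument shows $f(s^2)=1$, hence $f(t_{\delta_{i+2}})=f(s)^2 f(t_{\delta_i})f(s)^{-2}=f(t_{\delta_i})$; in particular $f(t_{\delta_2})=f(t_{\delta_4})$, and since $t_{\delta_1}$ commutes with $t_{\delta_4}$ by (R1), $f(t_{\delta_1})$ commutes with $f(t_{\delta_2})$, so Lemma \ref{com_norm} applies once more.

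I do not expect a genuine obstacle here: the machinery is exactly the one assembled for Theorem \ref{MNgtoMNh}, and the only point that merits a second's thought is verifying that the numerical hypothesis $g\ge 2h+2$ is the sharp condition needed to apply Theorem \ref{MNtoGLfact} to a representation of dimension $m=2h$, which it is. The only way this scheme could fail would be if $\ker\Phi$ contained the elements $f(s)$ (or $f(s^2)$) as genuine torsion, which is precluded by torsion-freeness of the Torelli group.
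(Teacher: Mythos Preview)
Your proposal is correct and is exactly the argument the paper has in mind: the paper states this theorem with the sentence ``can be proved by the same method as Theorem \ref{MNgtoMNh}'' and leaves the details to the reader, and you have supplied precisely those details, replacing $\Psi_1$ by $\Phi$ and the torsion-freeness of $\ker\Psi_1$ (Corollary \ref{kerTF}) by that of the Torelli group of $S_h$.
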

\begin{theorem}
Suppose that $g\ge 3$ and $h\le 2g$. Then the only homomorphism from $\M(S_g)$ to $\M(N_h)$ is the trivial one.
\end{theorem}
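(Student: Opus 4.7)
The plan is to imitate the proof of Theorem~\ref{MNgtoMNh}, composing $f$ with $\Psi_1$ and exploiting torsion-freeness of its kernel; the new ingredient is a chain element inside $\M(S_g)$. First I dispose of $h\le 2$: then $\M(N_h)$ is either trivial or isomorphic to $\Z_2\times\Z_2$, hence abelian, so $f$ factors through $\M(S_g)^\ab$, which vanishes for $g\ge 3$ by Powell's theorem. Thus $f$ is trivial in this range.

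Assume $h\ge 3$. The composition $\Psi_1\circ f\colon\M(S_g)\to\GL(h-1,\C)$ has target dimension $h-1\le 2g-1$, so Theorem~\ref{FHK} yields $\Psi_1\circ f=1$. Hence $f(\M(S_g))\subseteq\ker\Psi_1$, which is torsion-free by Corollary~\ref{kerTF}.

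Fix a chain $c_1,\dots,c_{2g+1}$ of non-separating simple closed curves on $S_g$ (consecutive ones meeting once, non-consecutive ones disjoint) and set $s=t_{c_1}\cdots t_{c_{2g+1}}$. A regular neighbourhood of the chain is a subsurface of genus $g$ with a single boundary component that bounds a disk in the closed surface $S_g$, so the classical chain relation gives $s^{2g+2}=1$. At the same time, the braid and disjointness relations yield $t_{c_{i+1}}s=st_{c_i}$ for $1\le i\le 2g$, exactly as in (\ref{srel}). Since $f(s)$ is a torsion element of the torsion-free group $\ker\Psi_1$, we obtain $f(s)=1$ and consequently $f(t_{c_1})=f(t_{c_2})=\cdots=f(t_{c_{2g+1}})=:A$.

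To finish, I propagate this equality to every non-separating Dehn twist. By the change of coordinates principle, any ordered pair of non-separating simple closed curves meeting transversally at one point can be realised as two consecutive members of some $(2g+1)$-chain on $S_g$, so the previous paragraph forces $f$ to take the same value on the two corresponding Dehn twists. The graph whose vertices are isotopy classes of non-separating simple closed curves on $S_g$, with edges joining pairs that meet in one point, is connected, so $f(t_\gamma)=A$ for every non-separating simple closed curve $\gamma$. Since $\M(S_g)$ is generated by Dehn twists along non-separating curves, $\im f=\langle A\rangle$ is abelian, and by perfectness of $\M(S_g)$ for $g\ge 3$ this forces $f$ to be trivial. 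The main obstacle is the propagation step in the last paragraph, which rests on the standard but non-trivial facts that any one-intersection pair of non-separating curves extends to a $(2g+1)$-chain and that the corresponding one-intersection graph of non-separating curves is connected; both are applications of the change of coordinates principle.
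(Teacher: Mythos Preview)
Your argument is correct and is precisely the method the paper has in mind: compose with $\Psi_1$, kill the composition via Theorem~\ref{FHK}, land in the torsion-free kernel, and use a finite-order chain element to force all consecutive twist images to coincide.  Two small remarks.  First, a regular neighbourhood of a chain of $2g+1$ curves (odd length) is $S_{g,2}$, not $S_{g,1}$; both boundary components bound disks in the closed surface, and the odd chain relation $(t_{c_1}\cdots t_{c_{2g+1}})^{2g+2}=t_{d_1}t_{d_2}$ still yields $s^{2g+2}=1$, so your conclusion survives but the stated justification needs this correction.  Second, your propagation step works, but once you have $f(t_{c_1})=f(t_{c_2})$ you can finish more quickly in the spirit of Lemma~\ref{com_norm}: by change of coordinates every pair of nonseparating curves meeting once has commuting (hence, by the braid relation, equal) images, and the Humphries generators already form a connected configuration under one-point intersection, so $\im f$ is cyclic without invoking a new chain for every pair or the connectivity of the full one-intersection graph.
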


\section{Homomorphisms from $\M(N_g)$ to $\GL(g-1,\C)$.}
The aim of this section is to prove Theorem \ref{MNtoGLg-1}. The prove it divided in two cases, according to the parity of the genus.

Let $g=2r+s$, $s\in\{1,2\}$,  $S'=S_{r,s}$ and 
$\iota\colon\M(S')\to\M(N_{g,n})$ be the homomorphism from Corollary \ref{HomSN}.
If $f\colon\M(N_{g,n})\to\GL(m,\C)$ is a homomorphism, then we set $f'=f\circ\iota$. 

\begin{proof}[Proof of Theorem \ref{MNtoGLg-1} for odd $g$.]
Suppose that $N=N_{2r+1}$, $r\ge 3$ and $f\colon\M(N)\to\GL(2r,\C)$ is a homomorphism, such that $\im(f)$ is not abelian.
By Theorem \ref{KorU}, $f'$ is conjugate to the homological representation $\Phi$, and thus  the exists a basis, such that
$f(t_{\varepsilon_i})=f'(t_{\alpha_i})=A_i$, $f(t_{\delta_{2i}})=f'(t_{\beta_i})=B_i$ for $1\le i\le r$ and
$f(t_{\delta_{2j+1}})=f'(t_{\gamma_j})=C_j$ for $1\le j\le r-1$.
Set $U_k=f(u_k)$ for $1\le k\le 2r$.

Since $U_{2r}$ commutes with $A_i$ and $B_i$ for $1\le i\le r$, and with $C_j$ for $j=1,\dots,r-2$ (R6,R8) thus, by Lemma \ref{diag},  \[U_{2r}=\begin{pmatrix}\lambda I_{2r-2}&0\\0&X\end{pmatrix},\]
for some $2\times 2$ matrix $X$. Since $U_{2r}$ is conjugate to $U_{2r}^{-1}$ we have $\lambda\in\{-1,1\}$ and by multiplying $f$ by $(-1)^\ab$ if necessary, we may assume $\lambda=1$. The relation $B_rU_{2r}B_r=U_{2r}$ (R12) implies 
$X=\begin{pmatrix}x&0\\y&-x\end{pmatrix}$.
From (R11) and (R7) we have 
\begin{align*}
&U_{2r-2}=(C_{r-1}B_rB_{r-1}C_{r-1})^{-1}U_{2r}(C_{r-1}B_rB_{r-1}C_{r-1}),\\
&U_{2r}U_{2r-2}-U_{2r-2}U_{2r}=0,
\end{align*}
and since the left hand side of the last equation is equal to
\[(1-x^2)(E_{2r,2r-3}+E_{2r-2,2r-1}),\] thus $x^2=1$. We have $U_{2r}^{-1}=U_{2r}$, and from (R11) and (R9)
\begin{align*}
&U_{2r-1}=(C_{r-1}B_r)^{-1}U_{2r}(C_{r-1}B_r),\\  
&U_{2r}U_{2r-1}U_{2r}-U_{2r-1}U_{2r}U_{2r-1}=0.
\end{align*}
By considering the cases $x=1$ and $x=-1$ separately, we find that the left hand side of the last equation is of the form $(y-x)^2Z$, where  $Z\ne 0$. Hence $x=y$ and $U_{2r}=\Psi_1(u_{2r})$ if $x=1$, or $U_{2r}=\Psi_2(u_{2r})$ if $x=-1$.
By Theorem \ref{gener}, $f$ is equal to $\Psi_1$ or $\Psi_2$ on generators of $\M(N)$.
\end{proof}

Now we will borrow some arguments from \cite{KorkSymp} to prove Lemma \ref{AB} below, which will be a starting point for the proof of Theorem \ref{MNtoGLg-1} for even genus.

\begin{lemma}\label{flag}
Suppose that $n\le 1$, $g\ge 5$ and $f\colon\M(N_{g,n})\to\GL(m,\C)$ is a homomorphism. If there is a flag $0=W_0\subset W_1\subset\cdots\subset W_k=\mathbb{C}^m$ of $\M(N_{g,n})$-invariant subspaces such that
$\dim(W_i/W_{i-1})<g-1$ for $i=1,\dots,k$, then $\im(f)$ is abelian.
\end{lemma}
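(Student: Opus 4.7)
The plan is to refine the given flag to a complete $\M(N_{g,n})$-invariant flag, observe that $f$ then becomes upper-triangular with diagonal characters taking values in $\{\pm 1\}$, and finally use the finite-order element $s = t_{\delta_1}\cdots t_{\delta_{g-1}}$ exactly as in the proof of Theorem \ref{MNgtoMNh} to force commutativity via Lemma \ref{com_norm}. Write $\M = \M(N_{g,n})$.

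First I would look at each induced representation $\bar f_i\colon\M\to\GL(W_i/W_{i-1})$. Because $\dim(W_i/W_{i-1})\le g-2$, Theorem \ref{MNtoGLfact} says that $\im\bar f_i$ is either trivial, $\Z_2$, or $\Z_2\times\Z_2$, in particular an elementary abelian $2$-group. Such an image consists of commuting involutions and is therefore simultaneously diagonalizable, so each $W_i/W_{i-1}$ splits as a direct sum of one-dimensional $\M$-invariant subspaces. Pulling these back refines the given flag into a complete $\M$-invariant flag $0\subset V_1\subset\cdots\subset V_m=\C^m$ of lines. Choosing an adapted basis puts $f$ in upper-triangular form, and the diagonal entries define characters $\chi_j\colon\M\to\C^\ast$. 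By Theorem \ref{abNg} the abelianization $\M^\ab$ has exponent $2$, so every $\chi_j$ takes values in $\{\pm 1\}$; consequently $f([\M,\M])$ lies in the torsion-free unipotent group $U$ of upper-triangular matrices with $1$'s on the diagonal.

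Now let $s = t_{\delta_1}\cdots t_{\delta_{g-1}}$, which has finite order in $\M$. Its class $[s]=(g-1)[t_{\delta_1}]$ in $\M^\ab$ is trivial for $g=5$ and $g\ge 7$ and has order $2$ for $g=6$ (by Theorem \ref{abNg}). So for $g\ne 6$ we have $s\in[\M,\M]$, hence $f(s)\in U$ is a finite-order element of a torsion-free group and therefore trivial; the relation $t_{\delta_{i+1}}s = s t_{\delta_i}$ then forces $f(t_{\delta_1}) = f(t_{\delta_2})$, and Lemma \ref{com_norm} gives that $\im f$ is abelian. For $g=6$ the same reasoning applied to $s^2$ yields $f(s^2)=1$ and, after conjugation, $f(t_{\delta_2})=f(t_{\delta_4})$; since $\delta_1$ and $\delta_4$ are disjoint, (R1) forces $f(t_{\delta_1})$ to commute with $f(t_{\delta_2})$, and Lemma \ref{com_norm} concludes.

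The one delicate step is the refinement: a priori the quotients $W_i/W_{i-1}$ carry only an abelian, not necessarily semisimple, representation, so it is essential that Theorem \ref{MNtoGLfact} pins the image down to an exponent-$2$ group, which is automatically diagonalizable. Once the complete flag is in place, the rest is just the torsion-free-versus-finite-order trick already used in the proof of Theorem \ref{MNgtoMNh}, applied to the unipotent radical $U$ in place of the representation-theoretic kernel $K$.
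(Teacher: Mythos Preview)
Your reduction step---refining the flag via Theorem~\ref{MNtoGLfact} and concluding that $f([\M,\M])$ lands in the unipotent upper-triangular group $U$---is correct and is essentially what the paper does too (there by citing \cite[Lemma~4.8]{KorkSymp}).

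The gap is in the endgame. You use that $s=t_{\delta_1}\cdots t_{\delta_{g-1}}$ has finite order in $\M=\M(N_{g,n})$, but this is only asserted in the paper, and only true, for $n=0$. For $n=1$ the mapping class group $\M(N_{g,1})$ is torsion-free (it injects, via the orientation double cover, into the mapping class group of an orientable surface with boundary), so $s$ has infinite order and your torsion-versus-torsion-free trick collapses. The case $n=1$ is not expendable: Lemma~\ref{dim_eigen} later applies the present lemma to $\M(R)$ with $R\approx N_{2r,1}$.

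The paper closes this uniformly in $n$ by a different and lighter argument that uses only the \emph{nilpotence} of $U$ rather than its torsion-freeness. Since $f([\M,\M])\subset U$, the composition $f'=f\circ\iota$ sends $[\M(S'),\M(S')]$ into $U$; but $[\M(S'),\M(S')]$ is perfect (here $S'=S_{r,s}$ from Corollary~\ref{HomSN}), and a perfect subgroup of a nilpotent group is trivial. Hence $\im(f')$ is abelian, and Lemma~\ref{com_norm} finishes. No finiteness hypothesis on elements of $\M(N_{g,n})$ is needed, so the argument works for $n=0$ and $n=1$ alike.
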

\begin{proof}
The same argument as in the proof of \cite[Lemma 4.8]{KorkSymp} can be applied, using Theorem \ref{MNtoGLfact}, to
show that with respect to some basis $f[\M(N_{g,n}),\M(N_{g,n})]$ is contained in the subgroup of upper triangular matrices with $1$ on the diagonal. Since this subgroup is nilpotent and $[\M(S'),\M(S')]$ is perfect, it follows that $f'[\M(S'),\M(S')]$ is trivial, which means that $\im(f')$ is abelian, and so is  $\im(f)$.
\end{proof}

\begin{lemma}\label{dim_eigen}
Suppose that $N=N_{2r+2}$, $r\ge 3$ and $f\colon\M(N)\to\GL(2r+1,\C)$ is a homomorphism, such that $\im(f)$ is not abelian. Then $L_1=f(t_{\delta_1})$ has an eigenvalue $\lambda$ such that $\dim E(L_1,\lambda)=2r$. 
\end{lemma}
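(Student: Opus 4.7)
\emph{Plan.} The strategy is to exploit the embedding $\iota\colon\M(S')\to\M(N)$ from Corollary \ref{HomSN}, where $S' = S_{r,2}$, apply Theorem \ref{KorU} to the restriction $f' = f\circ\iota$, and then read off the eigenspace of $L_1 = f'(t_{\alpha_1})$, remembering that $t_{\delta_1}=t_{\varepsilon_1}=\iota(t_{\alpha_1})$.

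First I would show that $f'$ is nontrivial. By Theorem \ref{gener}, $\M(N)$ is generated by $u_{g-1}$ together with $t_{\varepsilon_2}$ and $t_{\delta_i}$ for $1\le i\le g-1$; all of these generators except $u_{g-1}$ lie in $\iota(\M(S'))$. If $f'$ were trivial, then $\im(f)$ would be generated by $f(u_{g-1})$ alone, hence cyclic and in particular abelian, contradicting the hypothesis that $\im(f)$ is not abelian.

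Second, I would argue that $\C^{2r+1}$ carries an $\M(S')$-invariant filtration (ideally a splitting) consisting of a $2r$-dimensional piece on which $f'$ is conjugate to the symplectic representation $\Phi$, together with a $1$-dimensional trivial piece. Theorem \ref{KorU} classifies faithful $2r$-dimensional representations of $\M(S_{r,2})$ as $\Phi$; the single extra dimension shows up as either an invariant subspace or an invariant quotient, and since for $r\ge 3$ the twist $t_{\alpha_1}$ lies in the commutator subgroup of $\M(S')$, any $1$-dimensional subquotient forces $f'(t_{\alpha_1})$ to act as the identity on it. To produce this decomposition I would either extend the Franks--Handel--Korkmaz techniques by one dimension beyond Theorem \ref{KorU}, or combine Lemma \ref{flag} on the nonorientable side with a direct analysis of the braid relations to rule out irreducibility in dimension $2r+1$.

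Third, with the decomposition in hand the matrix of $L_1 = f'(t_{\alpha_1})$ is conjugate to $\diag(A_1, 1)$, where $A_1 = \diag(V, I_{2r-2})$ is the transvection image of $t_{\alpha_1}$ under $\Phi$. The eigenspace $E(A_1,1)$ has dimension $2r-1$ (the $2\times 2$ Jordan block $V$ contributes only $1$), and direct-summing with the $1$-dimensional trivial block raises this to $\dim E(L_1,1)=2r$, which is exactly the desired conclusion.

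The main obstacle is the decomposition step: producing the $2r$-dimensional $\M(S')$-invariant summand and ensuring that the complementary direction is a \emph{split} summand rather than a nontrivial extension. In the non-split scenario the eigenspace dimension can in principle drop to $2r-1$, and one would then have to show that the associated extension cocycle vanishes on $\ker(A_1-I)$; I expect this to fall out of the commutation relations (R1, R2, R6, R8) between $t_{\delta_1}$ and the remaining generators of $\M(N)$, but it is the genuinely delicate part of the argument.
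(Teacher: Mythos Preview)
Your proposal has a genuine gap: step 2 is the entire content of the lemma, and you have not proved it. You want to produce an $\M(S')$-invariant decomposition $\C^{2r+1}\cong \C^{2r}\oplus\C$ with $f'$ acting as $\Phi$ on the first summand, and you concede that this ``decomposition step'' is the ``main obstacle'' and ``genuinely delicate''. That is an accurate self-assessment, but it means you have essentially stated what you want rather than argued for it. Worse, the logical structure of the paper is the reverse of what you propose: the eigenspace statement (this lemma) is proved first and is then used as \emph{input} to Lemma~\ref{AB}, which is where the normal form $f(t_{\varepsilon_i})=A_i$, $f(t_{\delta_{2i}})=B_i$ is actually obtained. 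Trying to get the decomposition before knowing the eigenspace dimension is attacking the harder statement in order to deduce the easier one.

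The paper's route avoids the decomposition entirely. The key idea you are missing is to work not only with the orientable subsurface $S'$ but also with the nonorientable subsurface $R\approx N_{2r,1}$ obtained by deleting a neighbourhood of $\delta_1\cup\delta_2$. Any (generalised) eigenspace of $L_1$ is automatically $\M(R)$-invariant, and if $m=\dim E(L_1,\lambda)\le 2r-2$ one can manufacture an $\M(R)$-invariant subspace of dimension between $3$ and $2r-2$; Lemma~\ref{flag} (which rests on Theorem~\ref{MNtoGLfact}) then forces $f|_{\M(R)}$ to be abelian, hence $f(t_{\delta_4})=f(t_{\delta_5})$, and Lemma~\ref{com_norm} gives the contradiction. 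The borderline case $m=2r-1$ is handled by comparing $E(L_1,\lambda)$ with $E(L_2,\lambda)$ and invoking either the same $\M(R)$ argument (if they differ) or \cite[Lemmas~4.3 and 4.8]{KorkSymp} applied to $f'$ (if they coincide). No splitting, no extension cocycle, and no extension of Theorem~\ref{KorU} to dimension $2r+1$ is needed.
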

\begin{proof}
By \cite[Corollary 4.6]{KorkSymp} applied to $f'$, $L_1$ has at most two eigenvalues. It follows that there is an eigenvalue $\lambda$ with $\#\lambda\ge r+1\ge 4$. Set $m=\dim E(L_1,\lambda)$. Since $\im(f)$ is not abelian, thus $m\le 2r$. We are going to show that $m=2r$.

Let $R$ be the subsurface obtained by removing from $N$ a regular neighbourhood of $\delta_1\cup\delta_2$. We have $R\approx N_{2r,1}$. We treat $\M(R)$ as a subgroup of $\M(N)$.

Suppose $m\le 2r-2$.
Let
$W=E^k(L_1,\lambda)$, where $k=\max\{4-m, 1\}$.
Observe that $W$ is a $\M(R)$-invariant subspace with
$3\le\dim W\le 2r-2$. By Lemma \ref{flag},  $f(\M(R))$ is abelian, which means $f(t_{\delta_4})=f(t_{\delta_5})$. By Lemma \ref{com_norm}, $\im(f)$ is abelian,  a contradiction.

Suppose that $m=2r-1$ and set $L_2=f(t_{\delta_2})$. If $E(L_1,\lambda)\ne E(L_2,\lambda)$ then $E(L_1,\lambda)\cap E(L_2,\lambda)$ is a $\M(R)$-invariant subspace of dimension $2r-3$ or $2r-2$ and we can use the same argument as above to obtain a contradiction. If $E(L_1,\lambda)=E(L_2,\lambda)$, then by \cite[Lemma 4.3]{KorkSymp} applied to $f'$, $E(L_1,\lambda)$ is a $\M(S')$-invariant subspace of dimension $2r-1$, and by \cite[Lemma 4.8]{KorkSymp} $f'$ is trivial. It follows that $\im f$ is abelian, a contradiction.
\end{proof}

\begin{lemma}\label{AB}
Suppose that $N=N_{2r+2}$, $r\ge 3$ and $f\colon\M(N)\to\GL(2r+1,\C)$ is a homomorphism. If $r=3$ then assume that $1$ is the unique eigenvalue of $f(t_{\delta_1})$. Then either $\im(f)$ is abelian, or with respect to some basis
$f(t_{\varepsilon_i})=A_i$, $f(t_{\delta_{2i}})=B_i$ for $i=1,\dots,r$.
\end{lemma}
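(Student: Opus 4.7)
The plan is to use Lemma~\ref{dim_eigen} to pin down the Jordan structure of $L_1 = f(t_{\delta_1})$, apply Theorem~\ref{KorU} to a suitable $(2r)$-dimensional $\iota(\M(S'))$-invariant subspace of $f' = f \circ \iota$, and then trivialise a $1$-cocycle on a complementary vector by an explicit adjustment.

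By Lemma~\ref{dim_eigen} and (R12), $L_1$ has an eigenvalue $\lambda \in \{\pm 1\}$ with $\dim E(L_1,\lambda) = 2r$, so $L_1 - \lambda I$ has rank exactly one (as $L_1 \ne \pm I$ when $\im(f)$ is non-abelian). For $r = 3$ the hypothesis fixes $\lambda = 1$ and excludes any second eigenvalue; for $r \ge 4$ the alternatives are ruled out as follows. If $f'$ admits a nontrivial $(2r)$-dim invariant subspace $V$, Theorem~\ref{KorU} forces $f'|_V$ to be conjugate to $\Phi$, so $L_1|_V$ has only eigenvalue $1$, pushing $\lambda = 1$ and excluding a second eigenvalue $-1$; the alternative that no such $V$ exists, combined with perfectness of $\M(S_{r,2})$, will contradict the existence of the $1$-dim fixed complement produced below. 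In the remaining case, $L_1$ is conjugate to the Jordan matrix $A_1 = \diag(V, I_{2r-1})$.

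Next, one locates the $(2r)$-dim $\iota(\M(S'))$-invariant subspace $V' \subset \C^{2r+1}$. Contrary to the odd-genus setting, $V'$ is \emph{not} the eigenspace $E(L_1,1)$: a quick calculation in the standard model $f(t_{\varepsilon_i}) = A_i$, $f(t_{\delta_{2i}}) = B_i$ shows that $E(L_1,1)$ fails to be $L_2 = f(t_{\delta_2})$-invariant. Instead $V'$ is the annihilator of a common fixed covector $\xi$ of the dual representation $f'^*$; using the rank-one structure of each $L_i - I$ together with the full system of commutation and braid relations among $\{f(t_{\alpha_i}), f(t_{\beta_i})\}_{1 \le i \le r}$, the intersection $\bigcap_g E(f'(g)^T, 1)$ over generators $g$ of $\iota(\M(S'))$ is shown to be exactly one-dimensional, yielding $\xi$. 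Set $V' = \ker\xi$ and apply Theorem~\ref{KorU} to $f'|_{V'}$: the trivial possibility forces $f'$ into unipotent matrices and hence (by perfectness of $\M(S_{r,2})$) to be trivial, contradicting $L_1 \ne I$. So $f'|_{V'}$ is conjugate to the homological representation $\Phi$; pick a basis $e'_1, \dots, e'_{2r}$ of $V'$ realising this, so that $f'(t_{\alpha_i})|_{V'}$ and $f'(t_{\beta_i})|_{V'}$ equal the top-left $(2r)\times(2r)$ blocks of $A_i$ and $B_i$ respectively.

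Finally, extend by any complement $v_0$. In the basis $(e'_1, \dots, e'_{2r}, v_0)$ each $f'(g)$ is block upper-triangular $\bigl(\begin{smallmatrix}\Phi(g) & w_g\\ 0 & 1\end{smallmatrix}\bigr)$, with $g \mapsto w_g$ a $1$-cocycle valued in $V'$. The commutation relations $[t_{\alpha_i}, t_{\alpha_j}] = [t_{\alpha_i}, t_{\beta_j}] = [t_{\beta_i}, t_{\beta_j}] = 1$ for $i \ne j$ localise each $w_{\alpha_i}$ and $w_{\beta_i}$ to coordinates $2i-1, 2i$; the braid $t_{\alpha_i} t_{\beta_i} t_{\alpha_i} = t_{\beta_i} t_{\alpha_i} t_{\beta_i}$, specialised to that $2\times 2$ block, reduces these further to $w_{\alpha_i} = a_i e'_{2i-1}$ and $w_{\beta_i} = b_i e'_{2i}$ for scalars $a_i, b_i$. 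Setting $u = \sum_i(-b_i e'_{2i-1} + a_i e'_{2i}) \in V'$ satisfies $(\Phi(t_{\alpha_i}) - I)u = w_{\alpha_i}$ and $(\Phi(t_{\beta_i}) - I)u = w_{\beta_i}$; replacing $v_0$ by $v_0 - u$ annihilates the cocycle on every $t_{\alpha_i}$ and $t_{\beta_i}$. In this adjusted basis $f(t_{\varepsilon_i}) = A_i$ and $f(t_{\delta_{2i}}) = B_i$ as $(2r+1) \times (2r+1)$ matrices, proving the lemma. The main obstacle is the second paragraph: because the naive eigenspace $E(L_1, 1)$ is not invariant, one must exploit the joint rigidity of all generators of $\iota(\M(S'))$ through the common-fixed-covector intersection, which is the genuinely subtle step; the remainder reduces to routine $2\times 2$ block manipulations in the rigid matrices $A_i, B_i$.
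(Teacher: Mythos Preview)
Your plan has a genuine gap at the ``genuinely subtle step''. You assert that the common fixed-covector space $\bigcap_g E(f'(g)^T,1)$, taken over generators of $\iota(\M(S'))$, is one-dimensional; but this fails already for $f=\Psi_1$. The conditions $\xi A_i=\xi$ and $\xi B_i=\xi$ for $1\le i\le r$ force $\xi_1=\cdots=\xi_{2r}=0$, leaving only multiples of the last coordinate functional. However $t_{\gamma_r}\in\M(S')$ with $\iota(t_{\gamma_r})=t_{\delta_{2r+1}}$, and the last row of $\Psi_1(t_{\delta_{2r+1}})$ is $(0,\dots,0,-2,1)$, so this functional is not fixed and the intersection is $\{0\}$. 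Hence for $\Psi_1$-type representations there is no $\M(S')$-invariant hyperplane at all, and you cannot restrict $f'$ to a $(2r)$-dimensional subrepresentation on which Theorem~\ref{KorU} would apply. (Dually, $\Psi_1$ has a common fixed \emph{vector} while $\Psi_2$ has a common fixed covector but no fixed vector; a repair along your lines would have to treat both a sub- and a quotient-representation case, with the extension cocycle living on opposite sides.) If instead you intersect only over $\{t_{\alpha_i},t_{\beta_i}\}$, you do obtain a nonzero $\xi$, but then $V'=\ker\xi$ is not $\M(S')$-invariant and Theorem~\ref{KorU} still cannot be invoked, since the subgroup generated by the $t_{\alpha_i},t_{\beta_i}$ alone is not a surface mapping class group.

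A secondary issue: your argument that $\lambda=1$ for $r\ge 4$ is circular, since it forward-references ``the $1$-dim fixed complement produced below'', whose construction already presupposes that each $f'(g)-I$ has rank one, i.e.\ that $1$ is the unique eigenvalue. The paper handles this by citing \cite[Lemma~5.2]{KorkSymp}, then checks $E(L_1,1)\ne E(L_2,1)$ directly (otherwise $L_1$ and $L_2$ commute and $\im(f)$ is abelian), and finally invokes \cite[Lemma~4.7]{KorkSymp}, which produces the $A_i,B_i$ form in the ambient dimension $2r+1$ without ever requiring an invariant hyperplane.
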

\begin{proof}
Suppose that $\im(f)$ is not abelian. 
By Lemma \ref{dim_eigen}, $L_1=f(t_{\delta_1})$ has an eigenvalue $\lambda$ with
$\dim E(L_1,\lambda)=2r$. If $r=3$ then $\lambda=1$ by assumption, and for $r\ge 4$, $\lambda=1$ by the proof of \cite[Lemma 5.2]{KorkSymp}. Since $\M(S')$ is perfect, thus $\det L_1=1$ and $\lambda=1$ is the unique eigenvalue. Set $L_2=f(t_{\delta_2})$. 
We claim that $E(L_1,1)\ne E(L_2,1)$. For otherwise it is easy to prove that $L_1$ and $L_2$ commute (see the proof of Theorem \ref{MNtoGLfact} for $(g,m)=(6,4)$, case (5)),  and $\im(f)$ is abelian by Lemma \ref{com_norm},
 a contradiction. Now we can apply \cite[Lemma 4.7]{KorkSymp} to $f'$ to conclude that with respect to some basis we have
$f(t_{\varepsilon_i})=f'(t_{\alpha_i})=A_i$, $f(t_{\delta_{2i}})=f'(t_{\beta_i})=B_i$ for $i=1,\dots,r$.
\end{proof}

\begin{proof}[Proof of Theorem \ref{MNtoGLg-1} for even $g$.]
Suppose that $N=N_{2r+2}$, $r\ge 4$ and $f\colon\M(N)\to\GL(2r+1,\C)$ is a homomorphism, such that $\im(f)$ is not abelian. 
By Lemma \ref{AB} there is a basis such that 
$f(t_{\varepsilon_i})=A_i$ and $f(t_{\delta_{2i}})=B_i$ for $1\le i\le r$.
Set $D_i=f(t_{\delta_{2i+1}})$ for $1\le i\le r$ and
$U_j=f(u_j)$ for $1\le j\le 2r+1$.

Fix $i\in\{1,\dots,r-1\}$. Since $D_i$ is conjugate to $A_1$, it has one eigenvalue $\lambda=1$. For $j\notin\{i,i+1\}$ the relations $D_iA_j=A_jD_i$ and $D_iB_j=B_jD_i$ imply, by Lemma \ref{diag}, that $D_i$ has the form 
\[D_i=\begin{pmatrix}
I_{2(i-1)}&0&0&0&0\\
0&F_{11}&F_{12}&0&X_1\\
0&F_{21}&F_{22}&0&X_2\\
0&0&0&I_{2(g-i-1)}&0\\
0&Y_1&Y_2&0&z
\end{pmatrix},\]
where $F_{kl}$ are $2\times 2$ matrices, $X_k$ are $2\times 1$ vectors,
$Y_l$ are $1\times 2$ vectors and $z$ is a complex number. The relations
$D_iA_i=A_iD_i$ and $D_iA_{i+1}=A_{i+1}D_i$ imply, for $k,l\in\{1,2\}$,  $VF_{kl}=F_{kl}V$,  $VF_{kl}=F_{kl}$ for $k\ne l$, $VX_k=X_k$, $Y_lV=Y_l$, hence
\begin{align*}
&F_{11}=\begin{pmatrix}s_1&t_1\\0&s_1\end{pmatrix}, 
F_{12}=\begin{pmatrix}0&v_1\\0&0\end{pmatrix}, 
X_1=\begin{pmatrix}x_1\\0\end{pmatrix},\\
&F_{21}=\begin{pmatrix}0&v_2\\0&0\end{pmatrix}, 
F_{22}=\begin{pmatrix}s_2&t_2\\0&s_2\end{pmatrix},
X_2=\begin{pmatrix}x_2\\0\end{pmatrix},\\ 
&Y_1=\begin{pmatrix}0&y_1\end{pmatrix}, 
Y_2=\begin{pmatrix}0&y_2\end{pmatrix}. 
\end{align*}
Since $s_1$, $s_2$ are eigenvalues, we have $s_1=s_2=1$ and $\det D_i=z$, which gives $z=1$. Now, by solving the equations $B_iD_iB_i-D_iB_iD_i=0$ and $B_{i+1}D_iB_{i+1}-D_iB_{i+1}D_i=0$ we obtain
$t_1=t_2=1$, $v_1v_2=1$, $y_2=y_1v_1$, $x_2=x_1v_2$, $x_1y_1=0$.
Thus, for $i=1,\dots,r-1$ we have
\[D_i=\begin{pmatrix}
I_{2(i-1)}&0&0&0&0&0&0\\
0&1&1&0&\alpha_i&0&\alpha_ix_i\\
0&0&1&0&0&0&0\\
0&0&\alpha_i^{-1}&1&1&0&x_i\\
0&0&0&0&1&0&0\\
0&0&0&0&0&I_{2(g-i-1)}&0\\
0&0&y_i&0&\alpha_iy_i&0&1
\end{pmatrix},\quad x_iy_i=0.\]
Similarly, using the relations between $D_r$ and $A_i$, $B_i$ it can be shown that
\[D_r=\begin{pmatrix}I_{2g-2}&0&0&0\\
0&1&1&x_r\\
0&0&1&0\\
0&0&y_r&1\end{pmatrix},\quad x_ry_r=0.\]
It is not possible that $x_r=y_r=0$, because then $D_r=A_r$ and Lemma \ref{tsq} would give a contradiction.
For $1\le i\le r-1$, by solving the equation $D_iD_r-D_rD_i=0$ we obtain $x_iy_r=0$ and $x_ry_i=0$. It follows that either $x_i=0$ for all $i=1,\dots,r$, or $y_i=0$ for all $i=1,\dots,r$.  We are going to show that it is possible to change the basis so that $\alpha_i=-1$ for $i=1,\dots,r-1$ and $x_r+y_r=-2$. 
Suppose that the old basis is $\beta_1=(v_1,w_1,\dots,v_r,w_r,v_{r+1})$. We consider two cases.

{\bf Case 1:} $x_r=0$. Then $y_r\ne 0$ and the new basis is:
\begin{align*}
&v'_{i}=(-1)^{r-i}\alpha_i\cdots\alpha_{r-1}v_i,\ 
w'_{i}=(-1)^{r-i}\alpha_i\cdots\alpha_{r-1}w_i,\ i=1,\dots,r-1,\\
&v'_{r}=v_r,\  w'_{r}=w_r,\quad v'_{r+1}=-\frac{y_r}{2}v_{r+1}.
\end{align*}
In the new basis we have:
\[
D_r=\Psi_1(t_{\delta_{2r+1}}),\quad
D_i=C_i+x_i'\left(E_{2r+1,2i}-E_{2r+1,2i+2}\right),\]
for $i=1,\dots,r-1$.

{\bf Case 2:} $y_r=0$. Then $x_r\ne 0$ and the new basis is:
\begin{align*}
&v'_{i}=(-1)^{r-i+1}\alpha_i\cdots\alpha_{r-1}\frac{x_r}{2}v_i,\ 
w'_{i}=(-1)^{r-i+1}\alpha_i\cdots\alpha_{r-1}\frac{x_r}{2}w_i,\\ &i=1,\dots,r-1,\, 
v'_{r}=-\frac{x_r}{2}v_r,\  w'_{r}=-\frac{x_r}{2}w_r,\quad v'_{r+1}=v_{r+1}.
\end{align*}
In the new basis we have:
\[
D_r=\Psi_2(t_{\delta_{2r+1}}),\quad
D_i=C_i+x_i'\left(E_{2i-1,2r+1}-E_{2i+1,2r+1}\right),\]
for $i=1,\dots,r-1$.

Since $U_{2r+1}$ commutes with $A_i$ and $B_i$ for $1\le i\le r-1$, thus, by  Lemma \ref{diag}, \[U_{2r+1}=\diag\left(\lambda_1I_2,\lambda_2I_2,\dots,\lambda_{r-1}I_2,X\right),\]
for some $3\times 3$ matrix $X$. The relations $A_rU_{2r+1}=U_{2r+1}A_r$ (R8) and $D_rU_{2r+1}D_r=U_{2r+1}$ (R12) imply that $X$ has the form
\[X=\begin{pmatrix}\lambda_r&\alpha&\lambda_r\\0&\lambda_r&0\\0&\beta&-\lambda_r\end{pmatrix}
\quad\mathrm{or}\quad
X=\begin{pmatrix}\lambda_r&\alpha&\beta\\0&\lambda_r&0\\0&\lambda_r&-\lambda_r\end{pmatrix}\]
respectively in case 1 and case 2. For $1\le i\le r-1$, by the relation (R6) we have $D_iU_{2r+1}-U_{2r+1}D_i=0$. By solving this equation we obtain 
$\lambda_i=\lambda_{i+1}$ and $x'_i=0$, hence $D_i=C_i$. We also see that $U_{2r+1}$ has two eigenvalues $\lambda_r$,$-\lambda_r$ with $\#\lambda_r=2r$. Since $U_{2r+1}$ is conjugate to $U_{2r+1}^{-1}$ we have $\lambda_r\in\{-1,1\}$ and by multiplying $f$ by $(-1)^\ab$ if necessary, we may assume $\lambda_r=1$. 

By the relation (R11) we have
\begin{align*}
&U_{2r}=(B_rC_r)^{-1}U_{2r+1}^{-1}(B_rC_r),\\  
&U_{2r-1}=(B_rC_rC_{r-1}B_r)^{-1}U_{2r+1}(B_rC_rC_{r-1}B_r).
\end{align*}
Similarly as in the proof for odd $g$, by solving
$U_{2r+1}U_{2r-1}-U_{2r-1}U_{2r+1}=0$ we obtain $\beta=-2\alpha$, and then by solving $U_{2r+1}U_{2r}U_{2r+1}-U_{2r}U_{2r+1}U_{2r}=0$ we obtain $\alpha=-1$ in the case 1, or $\alpha=1$ in the case 2. Hence $U_{2r+1}=\Psi_1(u_{2r+1})$
in the case 1, or $U_{2r+1}=\Psi_2(u_{2r+1})$
in the case 2. By Theorem \ref{gener}, $f$ is equal to $\Psi_1$ in the case 1, and equal to $\Psi_2$ in the case 2, on generators of $\M(N)$.
\end{proof} 

\section{Homomorphisms from $\M(N_{8})$ to $\GL(7,\C)$}
The aim of this section is to prove Theorem \ref{MN8toGL7}. First we have to define the epimorphism $\epsilon\colon\M(N_{2r+2})\to\Sp(2r,\Z_2)$. 

Fix $r\ge 1$ and set $V=H_1(N_{2r+2},\Z_2)$. $V$ is a vector space over $\Z_2$ of dimension $2r+2$ with basis $\overline{x_i}=[\xi_i]_2$ for $1\le i\le 2r+2$,
where $[\xi_i]_2$ denotes the mod 2 homology class of the curve $\xi_i$.
The mod 2 intersection pairing is the symmetric bilinear form on $V$ satisfying 
$\lr{\overline{x_i},\overline{x_j}}_2=\delta_{ij}$. We define another basis for $V$. For $1\le i\le r$ we set
\begin{align*}
&v_i=[\varepsilon_i]_2=\overline{x_1}+\cdots+\overline{x_{2i}},
\quad w_i=[\delta_{2i}]_2=\overline{x_{2i}}+\overline{x_{2i+1}},\\
&c=\overline{x_{2r+2}},\quad d=\overline{x_1}+\cdots+\overline{x_{2r+2}}.
\end{align*}
Let $\mathrm{Iso}(V)$ denote the group of automorphisms of $V$ preserving $\lr{\cdot,\cdot}_2$.
\begin{lemma}\label{semidir}
The group $\mathrm{Iso}(V)$ is isomorphic to a  semi-direct product 
$\Sp(2r,\Z_2)\ltimes\Z_2^{2r+1}$.
\end{lemma}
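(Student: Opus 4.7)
The plan is to realize $\mathrm{Iso}(V)$ as a split extension
\[1 \to \Z_2^{2r+1} \to \mathrm{Iso}(V) \to \Sp(2r,\Z_2) \to 1,\]
and then read off the semidirect product structure from an explicit splitting.

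First I will study the map $q \colon V \to \Z_2$, $q(v) = \langle v,v\rangle_2$. A direct computation in characteristic two gives $q(v+w) = q(v) + q(w)$, so $q$ is $\Z_2$-linear; its kernel $V_0$ is a hyperplane of dimension $2r+1$, and $q(v) = \sum a_i$ if $v = \sum a_i \overline{x_i}$. In particular $q(d) = 2r+2 \equiv 0$, so $d \in V_0$. The form $\langle\cdot,\cdot\rangle_2$ has Gram matrix $I$ in the basis $(\overline{x_i})$, hence is nondegenerate on $V$; using $\langle v,d\rangle_2 = q(v)$ one gets $V_0^\perp = \langle d\rangle$. Consequently $\langle\cdot,\cdot\rangle_2|_{V_0}$ is alternating (since $q\equiv 0$ on $V_0$) with radical $\langle d\rangle$, and descends to a nondegenerate alternating form on $\overline{V_0} := V_0/\langle d\rangle$, of dimension $2r$. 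Every $\phi \in \mathrm{Iso}(V)$ preserves $q$, hence $V_0$, and the radical, hence $\phi(d) = d$, so $\phi$ induces $\overline\phi \in \Sp(\overline{V_0}) \cong \Sp(2r,\Z_2)$, giving the desired homomorphism $\pi$.

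To split $\pi$ I will fix a symplectic basis of $\overline{V_0}$ and lift it to a $2r$-dimensional subspace $W \subset V_0$ complementary to $\langle d\rangle$. Then $W^\perp \subset V$ has dimension $2$ and contains $d$; moreover $W^\perp \cap V_0 = \langle d\rangle$, because any $v \in V_0$ with $v \perp W$ automatically satisfies $\langle v, d\rangle_2 = q(v) = 0$, so $v \in V_0^\perp = \langle d\rangle$. This forces the existence of $e \in W^\perp$ with $q(e) = 1$, and gives a decomposition $V = W \oplus \langle d\rangle \oplus \langle e\rangle$. Extending any $\sigma \in \Sp(W)$ by the identity on $\langle d, e\rangle$ yields $\tilde\sigma \in \mathrm{Iso}(V)$ with $\pi(\tilde\sigma) = \sigma$, providing the splitting. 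It then remains to compute $\ker\pi$: a direct analysis of the conditions $\phi(v) = v + \mu(v) d$ for $v \in V_0$ and $\phi(e) = e + u$, combined with the isometry condition, shows that any $\phi \in \ker\pi$ has the form $\phi_u(v) = v + \langle u,v\rangle_2 d$ and $\phi_u(e) = e + u$ for a uniquely determined $u \in V_0$, and conversely every $u \in V_0$ produces such an isometry. Each $\phi_u$ is an involution (using $q(u) = 0$), and one verifies $\phi_u \phi_{u'} = \phi_{u + u' + \langle u, u'\rangle_2 d}$, which is symmetric in $u, u'$; hence $\ker\pi$ is elementary abelian of order $|V_0| = 2^{2r+1}$, and so isomorphic to $\Z_2^{2r+1}$. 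Combined with the splitting, this gives $\mathrm{Iso}(V) \cong \Sp(2r,\Z_2) \ltimes \Z_2^{2r+1}$.

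The main conceptual pitfall is the characteristic-two subtlety: the ``diagonal'' $q$ is $\Z_2$-linear rather than quadratic, so one must first restrict to $V_0 = \ker q$ and then quotient by its radical $\langle d\rangle$ before a symplectic form appears. The identification $\ker\pi \cong \Z_2^{2r+1}$ also requires some care, since a priori $\ker\pi$ could be a non-abelian (extraspecial) $2$-group; the abelianness comes from the symmetry of the cocycle $(u,u') \mapsto \langle u,u'\rangle_2 d$, while the involution property of $\phi_u$ uses that $q(u) = 0$.
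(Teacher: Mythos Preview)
Your proof is correct and follows essentially the same approach as the paper's: both identify the fixed vector $d$ via the property $\langle x,d\rangle_2=\langle x,x\rangle_2$, the hyperplane $V_0=\ker q$ on which the form becomes alternating with radical $\langle d\rangle$, a symplectic complement $W$ to $\langle d\rangle$ in $V_0$, and a vector outside $V_0$ (your $e$, the paper's $c$) completing the decomposition $V=W\oplus\langle d\rangle\oplus\langle e\rangle$; the paper's explicit isometries $A_R$ and $B_{x,z}$ are precisely your $\tilde\sigma$ and $\phi_u$ (with $u=z+xd$), with the same group law and the same argument for abelianness. Your presentation is slightly more coordinate-free---you define the quotient map $\pi$ first and then split it, whereas the paper constructs the two subgroups directly and verifies the semidirect product decomposition---but the content is identical.
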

\begin{proof}
It is easy to check that $d$ is the unique vector of $V$ satisfying
$\lr{x,d}_2=\lr{x,x}_2$ for all $x\in V$, which implies that $d$ is fixed by all elements of $\mathrm{Iso}(V)$.

Let $W=\mathrm{span}\{v_i,w_i\,|\,i=1,\dots,r\}$ 
and observe that the restriction of $\lr{\cdot,\cdot}_2$ to $W$ is nondegenerate and $\lr{x,x}_2=0$ for $x\in W$, hence it is a symplectic form on $W$. For $R\in\Sp(W)$ we define $A_R\in\mathrm{Iso}(V)$ as
\[A_R(d)=d,\quad A_R(c)=c,\quad A_R(x)=R(x)\quad\textrm{for\ }x\in W.\] It is easy to check that $W=\{x\in V\,|\,\lr{x,d}_2=\lr{x,c}_2=0\}$. It follows that if $L\in\mathrm{Iso}(V)$ fixes $c$, then since $L(d)=d$, $L$ preserves $W$, and hence $L=A_R$ for some $R\in\Sp(W)$. Thus the mapping  $R\mapsto A_R$ defines an isomorphism $\Sp(W)\to\mathrm{Stab}_{\mathrm{Iso}(V)}(c)$.

For $x\in\mathbb{Z}_2$ and $z\in W$ we define $B_{x,z}\in\mathrm{Iso}(V)$ as
\[B_{x,z}(d)=d,\quad B_{x,z}(c)=c+xd+z,\quad B_{x,z}(w)=w+\lr{w,z}_2d\quad\textrm{for\ }w\in W.\]
Let 
\[N=\{B_{x,z}\,|\,x\in\mathbb{Z}_2, z\in W\}.\]
This is a subgroup of $\mathrm{Iso}(V)$ with the group law
\[B_{x_1,z_1}B_{x_2,z_2}=B_{x_1+x_2+\lr{z_1,z_2}_2,z_1+z_2}.\]
It follows that $N$ is abelian and $B_{x,z}^2=1$ for all
$x, z$. Thus $N$ is isomorphic to $\Z_2^{2r+1}$.

Let $L\in\mathrm{Iso}(V)$ be arbitrary. 
Since $\lr{L(c),d}=\lr{L(c),L(d)}=\lr{c,d}=1$, thus
$L(c)=c+xd+z$ for some $x\in\mathbb{Z}_2$, $z\in W$. It follows that
$B_{x,z}^{-1}L\in\mathrm{Stab}_{\mathrm{Iso}(V)}(c)$ and hence
$L=B_{x,z}A_R$ for some $R\in\Sp(W)$. This decomposition is clearly unique, and
since $A_RB_{x,z}A_R^{-1}=B_{x,R(z)},$ thus $N$ is normal in $\mathrm{Iso}(V)$ and
$\mathrm{Iso}(V)=N\rtimes\mathrm{Stab}_{\mathrm{Iso}(V)}(c)$.
\end{proof} 

\begin{lemma}\label{epionSp}
For $r\ge 2$ there is an epimorphism
\[\epsilon\colon\M(N_{2r+2})\to\Sp(2r,\Z_2),\]
whose kernel is normally generated by $t_{\delta_{2r+1}}u_{2r+1}$ and $t_{\delta_{2r+1}}t_{\varepsilon_r}^{-1}$.
\end{lemma}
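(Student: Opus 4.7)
The plan is to construct $\epsilon$ as the composition $\pi\circ\rho$, where $\rho\colon\M(N_{2r+2})\to\mathrm{Iso}(V)$ is induced by the action on $V=H_1(N_{2r+2},\Z_2)$ (which preserves $\lr{\cdot,\cdot}_2$), and $\pi\colon\mathrm{Iso}(V)\to\mathrm{Iso}(V)/N\cong\Sp(W)\cong\Sp(2r,\Z_2)$ is the projection modulo the normal subgroup $N$ supplied by Lemma \ref{semidir}.

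I would then verify surjectivity and kernel containment by computing $\epsilon$ on the generators from Theorem \ref{gener}. For a two-sided curve $\gamma$, $\rho(t_\gamma)$ acts as the transvection $T_{[\gamma]_2}$ on $V$; for the crosscap transposition $u_j$, $\rho(u_j)$ swaps $\overline{x_j}$ with $\overline{x_{j+1}}$ and fixes every other $\overline{x_i}$. A direct computation in the basis $\{v_i,w_i\}\cup\{c,d\}$ yields $\epsilon(t_{\varepsilon_i})=T_{v_i}$ and $\epsilon(t_{\delta_{2i}})=T_{w_i}$ for $1\le i\le r$, $\epsilon(t_{\delta_{2i+1}})=T_{v_i+v_{i+1}}$ for $1\le i\le r-1$, and the subtler equalities $\epsilon(t_{\delta_{2r+1}})=\epsilon(u_{2r+1})=T_{v_r}$ (neither of these last two generators fixes $c$, but they act identically modulo $N$). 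The listed transvections contain a standard generating set of $\Sp(2r,\Z_2)$, so $\epsilon$ is surjective, and the computed equalities immediately place $t_{\delta_{2r+1}}u_{2r+1}$ and $t_{\delta_{2r+1}}t_{\varepsilon_r}^{-1}$ in $\ker\epsilon$.

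The main obstacle is the reverse inclusion. Let $K_0$ be the normal closure of these two elements and set $G=\M(N_{2r+2})/K_0$; I must show the induced surjection $\bar\epsilon\colon G\to\Sp(2r,\Z_2)$ is injective. The first leverage is that in $G$, substituting $u_{2r+1}=t_{\delta_{2r+1}}^{-1}$ into relation (R12) gives $t_{\delta_{2r+1}}^2=1$, and the conjugacies from (R4), (R5) and (R9) then force every $t_{\delta_i}^2$, $t_{\varepsilon_j}^2$ and $u_k^2$ to become trivial in $G$. To finish, I would invoke the embedding $\iota\colon\M(S')\to\M(N_{2r+2})$ of Corollary \ref{HomSN} together with the classical fact that the mod-$2$ symplectic representation $\M(S')\to\Sp(2r,\Z_2)$ has kernel normally generated by squares of Dehn twists: since all such squares are killed in $G$, the restriction of $\bar\epsilon$ to $\iota(\M(S'))\subset G$ is injective. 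The relations defining $K_0$ express $u_{2r+1}$ and $t_{\varepsilon_r}$ as $t_{\delta_{2r+1}}^{\pm 1}\in\iota(\M(S'))$, so together with Theorem \ref{gener} this shows $G$ itself coincides with the image of $\iota(\M(S'))$ in the quotient, giving injectivity of $\bar\epsilon$.
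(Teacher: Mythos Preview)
Your construction of $\epsilon$ and the verification that the two given elements lie in $\ker\epsilon$ match the paper's approach (the paper cites surjectivity of $\rho$ from \cite{GP,McCP} rather than listing transvections, but your direct computation is equally valid). The reverse inclusion is also argued along the same lines as the paper, via the image of $\iota(\M(S'))$ and the mod-$2$ symplectic representation.

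There is, however, a genuine gap in your reverse inclusion. You invoke ``the classical fact that the mod-$2$ symplectic representation $\M(S')\to\Sp(2r,\Z_2)$ has kernel normally generated by squares of Dehn twists'', but for $g=2r+2$ the surface $S'$ is $S_{r,2}$, with \emph{two} boundary components, and for $S_{r,2}$ this statement is false: the kernel also contains $t_{\gamma_r}t_{\alpha_r}^{-1}$ (these curves become isotopic only after capping the second boundary component), and this element is not in the normal closure of squares of twists in $\M(S_{r,2})$. The paper deals with this by first factoring through the capping epimorphism $\M(S_{r,2})\to\M(S_{r,1})$, whose kernel is normally generated by $t_{\gamma_r}t_{\alpha_r}^{-1}$; since $\iota(t_{\gamma_r}t_{\alpha_r}^{-1})=t_{\delta_{2r+1}}t_{\varepsilon_r}^{-1}\in K_0$, this descends to a surjection $\M(S_{r,1})\to G$. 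Only then does the result of \cite{BGP} (which requires $r\ge 2$) apply, giving that the kernel of $\M(S_{r,1})\to\Sp(2r,\Z_2)$ is normally generated by $t_{\alpha_1}^2$; combined with your observation that $t_{\delta_1}^2=1$ in $G$, this yields a surjection $\Sp(2r,\Z_2)\to G$, hence $|G|\le|\Sp(2r,\Z_2)|$ and $\bar\epsilon$ is an isomorphism. Once you insert this intermediate step through $\M(S_{r,1})$, your argument coincides with the paper's.
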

\begin{proof}
Let $\M=\M(N_{2r+2})$.
The action of $\M$ on $V=H_1(N_{2r+2},\Z_2)$ induces a homomorphism
$\rho\colon\M\to\mathrm{Iso}(V)$, which was proved to be surjective in \cite{GP} and \cite{McCP}, and whose kernel is the normal closure of $t_{\delta_{2r+1}}u_{2r+1}$ by \cite{SzepGD}. By Lemma \ref{semidir}, there exists a normal subgroup $N$ of $\mathrm{Iso}(V)$, such that $\mathrm{Iso}(V)/N$ is isomorphic to $\Sp(2r,\Z_2)$. We define $\epsilon$ to be the composition of $\rho$ with the canonical projection $\mathrm{Iso}(V)\to\mathrm{Iso}(V)/N$.

Let $K$ be the normal closure of $t_{\delta_{2r+1}}u_{2r+1}$ and $t_{\delta_{2r+1}}t_{\varepsilon_r}^{-1}$ in $\M$. We claim that $K\subseteq\ker\epsilon$. We have $t_{\delta_{2r+1}}u_{2r+1}\in\ker\rho\subset\ker\epsilon$. 
For $x\in V$ we have $\rho(t_{\varepsilon_r})(x)=x+\lr{v_r,x}_2v_r$ and
$\rho(t_{\delta_{2r+1}})(x)=x+\lr{[\delta_{2r+1}]_2,x}[\delta_{2r+1}]_2$.
Since $[\delta_{2r+1}]_2=v_r+d$, it is not difficult to check that
$\rho(t_{\delta_{2r+1}})=B_{1,v_r}\circ\rho(t_{\varepsilon_r})$, which gives
$\rho(t_{\delta_{2r+1}}t_{\varepsilon_r}^{-1})\in N$ and $t_{\delta_{2r+1}}t_{\varepsilon_r}^{-1}\in\ker\epsilon$. It follows that there is an induced epimorphism \[\epsilon'\colon\M/K\to\mathrm{Iso}(V)/N\cong\Sp(2r,\Z_2).\]
To prove that $\epsilon'$ is an isomorphism, it suffices to show
$[\M:K]\le|\Sp(2r,\Z_2)|$. We are going to prove the last inequality by exhibiting an epimorphism $\Sp(2r,\Z_2)\to\M/K$.

Observe that the map $\eta\colon\M(S')\to\M/K$ defined to be the composition of  $\iota\colon\M(S')\to\M$ from Corollary \ref{HomSN} with the canonical projection $\pi\colon\M\to\M/K$ is surjective, because $\M$ is generated by twists about curves on $P(S')$ and $t_{\delta_{2r+1}}u_{2r+1}$ by Theorem \ref{gener}. Gluing a disc along  the boundary component of $S'$ bounding a pair of pants with $\alpha_r$ and $\gamma_r$ induces an epimomorphism $\M(S')\to\M(S_{r,1})$ whose  kernel is normally generated by $t_{\gamma_r}t_{\alpha_r}^{-1}$ (see \cite[Proposition 3.8]{KorkSymp}). Since
$\iota(t_{\gamma_r}t_{\alpha_r}^{-1})=t_{\delta_{2r+1}}t_{\varepsilon_r}^{-1}\in K$, it follows that we have an induced epimorphism 
$\eta'\colon\M(S_{r,1})\to\M/K$. There is an epimorphism $\M(S_{r,1})\to\Sp(2r,\Z_2)$ induced by the action of $\M(S_{r,1})$ on $H_1(S_{r,1},\Z_2)$, whose kernel is normally generated by $t_{\alpha_1}^2$ (see \cite[Theorem 5.7]{BGP}, here we are using the assumption $r\ge 2$). By applying Lemma \ref{tsq} (with $i=r$, $j=2r+1$) to $\pi\colon\M\to\M/K$, we have 
$\eta'(t_{\alpha_1}^2)=\pi(t^2_{\delta_1})=1$. It follows that there is an induced epimorphism
$\eta''\colon\Sp(2r,\Z_2)\to\M/K$.  
\begin{displaymath}
\xymatrix{
\M(S') \ar[r]^\iota \ar[d] & \M \ar[r]^\pi & \M/K\\
\M(S_{r,1}) \ar[d] \ar[urr]^{\eta'} & & \\
\Sp(2r,\Z_2) \ar[uurr]^{\eta''} & &
}
\end{displaymath}
The existence of $\eta''$ proves that $\epsilon'$ is an isomorphism and $K=\ker\epsilon$.\end{proof}
\begin{lemma}\label{MN8factSp}
Suppose that $f\colon\M(N_8)\to\GL(7,\C)$ is a homomorphism, such that
$f(t_{\delta_1})$ has order $2$. Then $f$ or $(-1)^\ab f$ factors through the epimorphism
$\epsilon\colon\M(N_8)\to\Sp(6,\Z_2)$.
\end{lemma}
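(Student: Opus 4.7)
The plan is to verify that $f$ or $(-1)^{\ab} f$ annihilates the normal generators of $\ker\epsilon$ given by Lemma~\ref{epionSp}, namely $t_{\delta_7}u_7$ and $t_{\delta_7}t_{\varepsilon_3}^{-1}$. By Theorem~\ref{abNg} the classes $[t_{\delta_j}]$ and $[t_{\varepsilon_j}]$ vanish in $\M(N_8)^{\ab} = \Z_2$ while $[u_j]$ is the generator, so the twist $(-1)^{\ab}$ keeps $f$ unchanged on the $t_{\delta_j}$'s and $t_{\varepsilon_j}$'s but flips signs on the $f(u_j)$'s. The two conditions for factoring therefore reduce to $f(t_{\delta_7}) = f(t_{\varepsilon_3})$ (always required) and $f(u_7) = \pm f(t_{\delta_7})$ (the sign determining which of $f$, $(-1)^{\ab}f$ works). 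The hypothesis $f(t_{\delta_1})^2 = I$ extends by conjugacy (all $t_{\delta_i}$ are conjugate via (R4) and $t_{\varepsilon_j}$ is conjugate to $t_{\delta_{2j}}$ via (R5)) to $f(t_{\delta_i})^2 = f(t_{\varepsilon_j})^2 = I$ for every admissible index.

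Setting $a_i = f(t_{\delta_i})$, the relations (R1), (R4) together with $a_i^2 = I$ make $\sigma_i \mapsto a_i$ a representation of $\mathfrak{S}_8$ on $\C^7$. By Lemma~\ref{repsym} it decomposes either (a) as a direct sum of $1$-dimensional characters or (b) as a single $7$-dimensional irreducible (the standard representation $\rho_{\mathrm{st}}$ or its sign twist). I would rule out case (a) by noting that it makes $\im f$ abelian: all $a_i$ coincide with a common diagonal involution $a$, relations (R3), (R5) combined with the braid relation force $f(t_{\varepsilon_j}) = a$, and (R9), (R11) together with the order-$2$ conditions force all $f(u_j)$ to coincide with a single involution $U$ commuting with $a$. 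Hence $f$ factors through $\M(N_8)^{\ab} = \Z_2$, which forces $f(t_{\delta_1}) = I$ since $[t_{\delta_1}]=0$, contradicting the order-$2$ hypothesis. The standard $7$-dimensional representation is also excluded by a determinant count: transpositions have determinant $-1$ there, whereas $[t_{\delta_i}] = 0$ in $\M(N_8)^{\ab}$ makes $\det f(t_{\delta_i}) = 1$. Consequently $\{a_i\}$ must generate a copy of $\mathfrak{S}_8$ acting on $\C^7$ as the sign-twisted standard representation, and the matrices $a_i$ are pinned down up to simultaneous conjugation.

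With the $a_i$'s fixed, both $f(t_{\varepsilon_3})$ and $f(u_7)$ are determined by centralizer and braid constraints. Relation (R3) places $f(t_{\varepsilon_3})$ in the centralizer of $\langle a_1,\ldots,a_5,a_7 \rangle \cong \mathfrak{S}_{\{1,\ldots,6\}} \times \mathfrak{S}_{\{7,8\}}$; the restriction of the sign-twisted standard representation to this product splits into three non-isomorphic isotypic summands of dimensions $5$, $1$, $1$, so Schur's lemma forces $f(t_{\varepsilon_3})$ into diagonal form in an adapted basis. Its eigenvalues (matching those of $a_6$ by (R5)) together with the braid relation against $a_6$ then single out $f(t_{\varepsilon_3}) = a_7 = f(t_{\delta_7})$ among the handful of diagonal candidates. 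A parallel analysis applies to $f(u_7)$: relations (R6), (R8), (R12) place it in the same centralizer and hence in the same diagonal form, and the interplay of (R9), (R10), (R11) with $f(u_6)$ narrows $f(u_7)$ down to $\{a_7,-a_7\}$, giving the desired dichotomy.

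The principal obstacle is the explicit braid-relation verification that excludes the second admissible diagonal candidate for $f(t_{\varepsilon_3})$ and the analogous uniqueness step for $f(u_7)$. Both reduce to small matrix identities in the isotypic basis of $\C^7$; they are routine in spirit and in the style of the computations in Sections~5 and~6, but this is where the argument becomes most hands-on and where one must be most careful with the change-of-basis bookkeeping.
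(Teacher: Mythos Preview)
Your proposal is correct and follows essentially the same route as the paper's proof: both identify the $\mathfrak{S}_8$-representation carried by the $f(t_{\delta_i})$ as the sign-twisted standard one, then pin down $f(t_{\varepsilon_3})=L_7$ and $f(u_7)=\pm L_7$ via centralizer constraints combined with the remaining braid and crosscap relations. The paper works in an explicit coordinate basis $(v_1,\dots,v_7)$ rather than through your Schur/isotypic decomposition, and for the $U_7$ step it uses (R7) (commutation of $U_5$ with $U_7$, after expressing $U_5$ through (R11)) together with a determinant argument in place of your (R9), but these are cosmetic differences.
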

\begin{proof}
Let $H$ be the normal closure of $t_{\delta_1}^2$ in $\M=\M(N_8)$ and $G=\M/H$.
Since $H\subseteq\ker f$,  
we have a homomorphism $f'\colon G\to\GL(7,\C)$ such that $f=f'\circ\pi$, where   $\pi\colon\M\to G$ is the canonical projection.  There is a homomorphism
$\rho\colon\mathfrak{S}_8\to G$, defined as 
$\rho(\sigma_i)=\pi(t_{\delta_i})$, where $\sigma_i=(i,i+1)$, for $1\le i\le 7$.
Let 
$\phi\colon\mathfrak{S}_8\to\GL(7,\C)$ be the composition $\phi=f'\circ \rho$. 
If $\phi$ is reducible, then $\im(\phi)$ is abelian by Lemma \ref{repsym},  $f(t_{\delta_1})=\phi(\sigma_1)=\phi(\sigma_2)=f(t_{\delta_2})$, and $\im(f)$  is also abelian by Lemma \ref{com_norm}, which implies $f(t_{\delta_1})=1$ by Theorem \ref{abNg}, a contradiction. Hence $\phi$ is irreducible and since $\det f(t_{\delta_1})=1$ (by Theorem \ref{abNg}), $\phi$ is the tensor product of the standard  and  sign representations (by Lemma \ref{repsym}). 
For $1\le i\le 7$ set $L_i=f(t_{\delta_i})=\phi(\sigma_i)$. With respect to some basis   
$(v_1,\dots,v_7)$ 
we have
\begin{align*}
L_1=\diag\left(A,-I_5\right),\quad L_7=\diag\left(-I_5,B\right),\quad
L_i=\diag\left(-I_{i-2},C,-I_{6-i}\right)
\end{align*}
for $2\le i\le 6$, where
\[A=\begin{pmatrix}1&-1\\0&-1\end{pmatrix},\quad
B=\begin{pmatrix}-1&0\\-1&1\end{pmatrix},\quad
C=\begin{pmatrix}-1&0&0\\-1&1&-1\\0&0&-1\end{pmatrix}.\quad
\]
Let $M$ be the matrix of $f(\varepsilon_3)$. Since $M$ commutes with $L_i$ for $i\ne 6$ (R5), it preserves $E(L_i,1)=\mathrm{span}\{v_i\}$. Hence
$M(v_i)=x_iv_i$ for $i\ne 6$ and $M(v_6)=y_1v_1+\cdots+y_7v_7$, for some complex numbers $x_i, y_j$.
By solving the equations $ML_i=L_iM$ for $1\le i\le 5$ and $i=7$ we obtain 
\[x_i=x_1,\ y_i=iy_1\ \textrm{\ for\ }1\le i\le 5,\quad y_6=x_1+6y_1,\  x_7=y_6-2y_7.\]
Since $M$ and $L_i$ are conjugate, they have the same eigenvalues, which gives  $x_1=-1$ and $y_6=-x_7$. If $y_6=1$, then $y_1=1/3$, $y_7=1$, which contradicts the braid relation $ML_6M=L_6ML_6$ (R5). Hence $y_6=-1$, $y_1=0$, $y_7=-1$, which means $M=L_7$. 

For $i=1,\dots 7$ let $U_i$ be the matrix of $f(u_i)$. Since $U_7$ commutes with
$L_j$ for $1\le j\le 5$ (R6) and with $M=L_7$ (R8), we obtain, as above, that
\begin{align*}
&U_7(v_i)=xv_i\quad\textrm{for\ }1\le i\le 5,\\
&U_7(v_6)=y(v_1+2v_2+3v_3+4v_4+5v_5)+(x+6y)v_6+zv_7\\
&U_7(v_7)=(x+6y-2z)v_7
\end{align*}
for some complex numbers $x,y,z$. Since $U_7$ is conjugate to its inverse, and $x$ is an eigenvalue of multiplicity at least $5$, thus $x=\pm 1$, and by multiplying $f$ by $(-1)^\ab$ if necessary, we may assume $x=-1$. By (R11) we have $U_5=(L_6L_7L_5L_6)^{-1}U_7(L_6L_7L_5L_6)$  and by solving $U_5U_7=U_7U_5$ we obtain $y=0$. Since
$\det U_7=\pm 1$, either $-1-2z=1$ or $-1-2z=-1$. In the latter case we have $U_7=-I$, and since $U_6$ is conjugate to $U_7$, thus $U_6=-I$, and the relation $L_6U_7U_6=U_7U_6L_7$ (R10) gives $L_6=L_7$, a contradiction. Hence $z=-1$ and $U_7=L_7$.

We have $M=U_7=L_7$ and since $L_7^2=I$, thus $\{t_{\delta_7}t_{\varepsilon_3}^{-1}, t_{\delta_7}u_7\}\subset\ker f$, which implies, by Lemma \ref{epionSp}, that $f$ factors through $\epsilon$.
\end{proof}

\begin{proof}[Proof of Theorem \ref{MN8toGL7}]
Suppose that $f\colon\M(N_8)\to\GL(7,\C)$ is a homomorphism, such that
$\im(f)$ is not abelian. By Lemma \ref{dim_eigen}, $L=f(t_{\delta_1})$.
has an eigenvalue $\lambda$ such that $\dim E(L,\lambda)=6$. 
Since $L$ is conjugate to $L^{-1}$ we have $\lambda^2=1$.
Suppose that $\lambda=-1$. Then since $\det L=1$ we have $\#\lambda=6$, and there is another eigenvalue $\mu=1$. 
It follows that $L$ has order $2$ and the case (2) holds by Lemma \ref{MN8factSp}.
If $\lambda=1$ then it must be the unique eigenvalue, and the case (3) holds by Lemma \ref{AB} and the proof of Theorem \ref{MNtoGLg-1} for even $g$.
\end{proof}
\begin{rem}
Suppose that $G$ is a finite quotient of $\M(N_g)$ for $g\ge 7$, $g\ne 8$, and
$f\colon G\to\GL(g-1,\C)$ is a homomorphism. Then, by Theorem \ref{MNtoGLg-1}, $\im(f)$ is abelian, and if $G$ is perfect, then $f$ must be trivial. For example, by Lemma \ref{epionSp}, for $r\ge 4$, the only homomorphism from $\Sp(2r,\Z_2)$ to $\GL(2r+1,\C)$ is the trivial one. 
\end{rem}
%
%

%

\end{document}